\newtheorem{theorem}{Theorem}[section]
\newtheorem*{lemma*}{Lemma}
\newtheorem{corollary}[theorem]{Corollary}
\newtheorem{lemma}[theorem]{Lemma}
\newcommand{\ds}{\displaystyle}
\newcommand{\C}{\mathcal{C}}
\newcommand{\M}{\mathcal{M}}
\newcommand{\K}{\mathcal{K}}
\newcommand{\B}{\mathcal{B}}
\newcommand{\G}{\mathcal{G}}
\newcommand{\ex}{\mathbf{E}\,}
\newcommand{\var}{\mathbf{Var}\,}
\newcommand{\mathbbm}{\mathbf}
\tikzstyle{vertex}=[circle,fill=black!15,minimum size=12pt,inner sep=0pt,
\tikzstyle{selected vertex} =
\tikzstyle{selected point} =
\tikzstyle{point} =
\tikzstyle{edge} = [draw,black!30,-]
\tikzstyle{selected edge} = [draw,thick,-]
\tikzstyle{strong edge} = [draw,ultra thick,-]
\tikzstyle{weight} = [font=\small]
\tikzstyle{ignored edge} = [draw,line width=5pt,-,black!20]
\title{Random planar maps and graphs\\ with minimum degree two and three}
\author{Marc Noy \qquad  Lander Ramos
\\ \\ Departament de Matem\`{a}tiques \\ Universitat Polit\`{e}cnica de Catalunya\\
Barcelona Graduate School of Mathematics %
\footnote{Jordi Girona 1--3, 08034 Barcelona, Spain.
E-mail: \texttt{marc.noy@upc.es,landertxu@gmail.com}. Supported by the Spanish Ministerio de Econom\'{i}a y Competitividad grants MTM2014-54745-P and MDM-2014-0445.}}
\date{}
\begin{document}
\maketitle

\begin{abstract}
We find precise  asymptotic estimates for the number  of planar maps and graphs with a condition on the minimum degree, and properties of random graphs from these classes. In particular we show that the size of the largest tree attached to the core of a random planar graph is of order $c \log(n)$ for an explicit constant $c$. These results  provide new information on the structure of random planar graphs.
\end{abstract}

\section{Introduction}

The main goal of this paper is to enumerate planar graphs subject to a condition on the minimum degree $\delta$, and to analyze the corresponding random planar  graphs. Asking for $\delta\ge1$ is not very interesting, since a random planar graph contains in expectation a constant number of isolated vertices. The condition $\delta\ge2$ is directly related to the concept of the core of a graph. Given a connected graph $\G$, its \textit{core} (also called the 2-core in the literature) is the maximum subgraph $\C$ of $\G$ with minimum degree at least two. The core $\C$ is obtained from $\G$ by repeatedly removing vertices of degree one. Conversely, $\G$ is obtained by attaching rooted trees at the vertices of~$\C$. Note that the core of a tree is empty.

The  \emph{kernel} of $\G$ is obtained from $\mathcal{C}$ by 
contracting all the induced paths between vertices of degree greater than 2
to a single edge. The kernel has minimum degree at least three, and $\C$ can be recovered from $\K$ by replacing edges with induced paths. Notice that $\G$ is planar if and only $\C$ is planar, in turn if and only if $\K$ is planar.

As shown in Figure \ref{Fig:graph}, the kernel may have loops and multiple edges, which must be taken into account since our goal is to analyze simple graphs. Another issue is that when replacing loops and multiple edges with paths
the same graph can be produced several times. To this end we  weight multigraphs  according to  the number of loops and edges of each multiplicity. We remark that the concepts of core and kernel of a graph  are instrumental in the classical theory of random graphs \cite{giant,probplanar}.

\begin{figure}[htp]
\begin{center}
\begin{tabular}{|c|c|c|}
\hline
&&\\
$\G$ & $\C$ & $\K$ \\
\begin{tikzpicture}[scale=0.7, auto,swap]
    \foreach \pos/\name in {{(0,-2)/1}, {(0,2)/2}, {(-1,0)/4},
        {(-2,0)/6}, {(-3,0)/3}, {(-1.5,1)/5}, {(-2,-.66)/7},
		{(-1,-1.33)/8}, {(-2,0)/6}, {(1,-1.5)/10}, {(1,-.5)/9},
		{(1,1.5)/13}, {(-3,1)/12}, {(-3,-1)/11}, {(-1.5,2)/14},
		{(1,2.66)/15}, {(-2,-2)/16}}
        \node[selected vertex] (\name) at \pos {$\name$};
	\foreach\source/\dest in {1/2,1/4,2/4,1/8,1/10,2/5,2/13,3/5,3/6,
	3/7,3/11,3/12,4/6,7/8,9/10,5/14,2/15,13/15,1/16,7/16}
	\path[selected edge](\source) -- (\dest);
\end{tikzpicture}
 &
\begin{tikzpicture}[scale=0.7, auto,swap]
    \foreach \pos/\name in {{(0,-2)/1}, {(0,2)/2}, {(-1,0)/4},
                            {(-2,0)/6}, {(-3,0)/3}, {(-1.5,1)/5}, {(-2,-.66)/7},
							 {(-1,-1.33)/8}, {(-2,0)/6},
							 {(1,1.5)/13},  {(1,2.66)/15}, {(-2,-2)/16}}
        \node[selected vertex] (\name) at \pos {$\name$};
    \foreach \pos/\name in {{(1,-1.5)/10}, {(1,-.5)/9},{(-3,1)/12}, {(-3,-1)/11},{(-1.5,2)/14}}
        \node[vertex] (\name) at \pos {$\name$};
	\foreach\source/\dest in
	 {1/2,1/4,2/4,1/8,2/5,3/5,3/6,3/7,4/6,7/8,1/16,7/16,2/13,2/15,13/15}
	\path[selected edge](\source) -- (\dest);
	\foreach\source/\dest in {1/10,3/11,3/12,9/10,5/14}
	\path[edge](\source) -- (\dest);
\end{tikzpicture}
&
\begin{tikzpicture}[scale=0.7, auto,swap]
    \foreach \pos/\name in {{(0,-2)/1}, {(0,2)/2}, {(-1,0)/4},
                            {(-2,0)/6},{(-2,-.66)/7}, {(-3,0)/3}}
        \node[selected vertex] (\name) at \pos {$\name$};
    \foreach \pos/\name in {{(1,-1.5)/10}, {(1,-.5)/9},
							 {(1,1.5)/13}, {(-3,1)/12}, {(-3,-1)/11}, {(-1.5,1)/5},
							 {(-1,-1.33)/8}, {(-2,0)/6},{(-1.5,2)/14}, {(1,2.66)/15}, {(-2,-2)/16}}
        \node[vertex] (\name) at \pos {$\name$};
	\foreach\source/\dest in {1/2,1/4,2/4,2/3,3/4,3/7,1/7}
	\path[strong edge](\source) -- (\dest);
	\foreach\source/\dest in
	 {1/10,2/13,3/11,3/12,9/10,5/14,1/16,7/16,2/15,13/15}
	\path[edge](\source) -- (\dest);
	\path[strong edge] (1) edge [bend left] (7);
	\path[strong edge, every loop/.style={looseness=15}] (2)
	edge [in=-45,out=45,loop] (2);
\end{tikzpicture}
\\ \hline
\end{tabular}

\end{center}
\caption{Core and kernel of a graph.}\label{Fig:graph}
\end{figure}
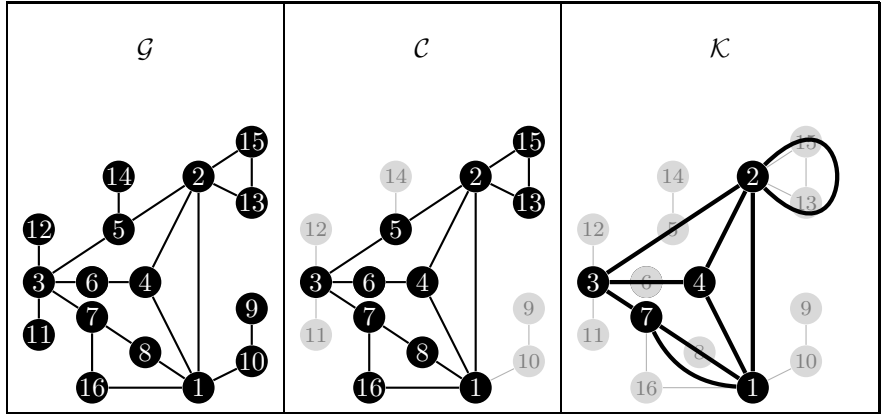

For the sake of brevity, it is convenient to introduce the following definitions: a \emph{2-graph} is a connected graph with minimum degree at least two, and a \emph{3-graph} is a  connected graph with minimum degree at least three.
In order to enumerate planar 2- and 3-graphs we use generating functions.
From now on all graphs are labelled and generating functions are of the exponential type. Let $c_n, h_n$  and $k_n$ be, respectively, the number of  planar connected graphs, 2-graphs and 3-graphs with $n$ vertices, and let
$$
    C(x) = \sum c_n {x^n \over n!}, \qquad
    H(x) = \sum h_n {x^n \over n!}, \qquad
    K(x) = \sum k_n {x^n \over n!}
$$
be the associated generating functions. Also, let $t_n=n^{n-1}$ be the number of (labelled) rooted trees with $n$ vertices and let $T(x)=\sum t_n x^n/n!$. The decomposition of a connected graph into its core and the attached trees implies the following equation
\begin{equation}\label{cores}
    C(x) = H(T(x)) + U(x),
\end{equation}
where $U(x)= T(x)- T(x)^2/2$ is the generating functions of \emph{unrooted} trees.
Since $T(x)=x e^{T(x)}$, we can invert the above relation and obtain
$$
    H(x) = C(xe^{-x})  -x + {x^2\over 2}.
    $$
The equation defining $K(x)$ is more involved and requires the bivariate generating function
$$
C(x,y) = \sum c_{n,k}\,y^k {x^n \over n!},
$$
where $c_{n,k}$ is the number of connected planar graphs with $n$ vertices and $k$ edges. We can express $K(x)$ in terms of $C(x,y)$ as
\begin{equation}\label{kernels}
    K(x) = C(A(x),B(x)) + E(x),
\end{equation}
where $A(x),B(x),E(x)$ are explicit elementary functions (see Section \ref{sec:equations}).

From the expression  of $C(x)$ as the solution of a system of functional-differential equations \cite{gn}, it was shown that
$$
    c_n \sim \kappa n^{-7/2} \gamma^n n!,
$$
where $\kappa\approx 0.4104\cdot 10^{-5}$ and $\gamma \approx 27.2269$ are computable constants. In addition, analyzing  the bivariate generating function $C(x,y)$ it is possible to obtain results on the number of edges and other basic parameters in random planar graphs. Our main goal is to  extend these results to planar 2-graphs and 3-graphs.

Using Equations (\ref{cores}) and (\ref{kernels}) we obtain precise asymptotic estimates for the number of planar 2- and 3-graphs:
$$
\renewcommand{\arraystretch}{1.5}
\begin{array}{llll}
    h_n \sim & \kappa_2 n^{-7/2} \gamma_2^n n!,
    \qquad &\gamma_2 \approx 26.2076, & \kappa_2 \approx 0.3724\cdot 10^{-5}, \\
    k_n \sim & \kappa_3 n^{-7/2} \gamma_3^n n!,
    \qquad &\gamma_2 \approx 21.3102, & \kappa_3 \approx 0.3107\cdot 10^{-5}.
\end{array}
$$
As is natural to expect, $h_n$ and $k_n$ are exponentially smaller than $c_n$.
Also, the number of 2-connected planar graphs is known to be asymptotically
$\kappa_{c} n^{-7/2} 26.1841^n n!$ (see \cite{bgw}), smaller than the number of 2-graphs.
This is consistent, since a 2-connected graph has minimum degree at least two, but not conversely. 

By enriching Equations (\ref{cores}) and (\ref{kernels}) taking into account the number of edges, we prove that the number of edges in random planar 2-graphs and 3-graphs are both asymptotically normal with linear expectation and variance.
The expected number of edges in connected planar graphs was shown to be \cite{gn} asymptotically $\mu n$, where $\mu \approx 2.2133$. We show that the corresponding constants for planar 2-graphs and 3-graphs are
$$
\mu_2\approx 2.2614, \qquad \mu_3\approx 2.4065.
$$
This conforms to our intuition that increasing the minimum degree should increase the expected number of edges.

We also analyze the size $X_n$ of the core in a random connected planar graph, and the size $Y_n$ of the kernel in a random planar 2-graph. We show that both variables are asymptotically normal with linear expectation and variance and that
$$
\renewcommand{\arraystretch}{1.3}
\begin{array}{lll}
    \ex X_n \sim &\lambda_2 n,  \qquad &\lambda_2 \approx 0.9618, \\
    \ex Y_n \sim &\lambda_3 n, \qquad &\lambda_3 \approx 0.8259.
\end{array}
$$
The value of $\lambda_2$ has been recently found by McDiarmid \cite{colin} using alternative methods.
Also, we remark that the expected size of the largest block (2-connected component) in random connected planar graphs is asymptotically $0.9598n$ \cite{3-conn}. Again this is consistent since the largest block is contained in the core but not conversely. 

The picture is completed by analyzing the size of the trees attached to the core. We show that for fixed $k \ge 1$  the number of trees with $k$ vertices attached to the core is asymptotically normal with linear expectation and variance. The expected value is asymptotically
$$
C {k^{k-1} \over k! } \rho^k n,
$$
where $C>0$ is a constant and $\rho = \gamma^{-1} \approx 0.03673$ is the radius of convergence of $C(x)$.
For $k$ large, the previous quantity grows like
$$
{C \over \sqrt{2\pi}}\cdot k^{-3/2} (\rho e)^k n.
$$
This quantity is negligible when $k \gg \log(n)/(\log(1/\rho e))$. Using the method of moments, we show that the size $L_n$ of the largest tree attached to the core satisfies
$$
\frac{L_n}{\log n} \to     {1 \over \log(1/\rho e)}  \approx 0.4340 \qquad \hbox{in probability}.
$$
This result provides new  information on the structure of random planar graphs.

Our last result concerns the distribution of the vertex degrees in random planar 2-graphs and 3-graphs. We show that for each fixed $k\ge2$ the probability that a random vertex has degree $k$ in a random planar 2-graph tends to a positive constant $d_H(k)$, and for each fixed $k\ge3$ the probability that a random vertex has degree $k$ in a random planar 3-graph tends to a positive constant $d_K(k)$. Moreover
$  \sum _{k\ge2} p_H(k) =     \sum _{k\ge3} p_K(k) = 1$,
and the probability generating functions
$$
 p_H(w) =\sum _{k\ge2} d_H(k)w^k, \qquad     p_K(w)= \sum _{k\ge3} d_K(k)w^k
 $$
are computable in terms of the analogous probability generating function $p_C(w)$ of connected planar graphs, which was fully determined in \cite{degree}.

The previous results show that almost all planar 2-graphs have a vertex of degree two, and almost all planar 3-graphs have a vertex of degree three. Hence asymptotically all our results hold also for planar graphs with minimum degree exactly two and three, respectively.  For the sake of conciseness, we will not repeat for each of our results the corresponding statement for arbitrary graphs of minimum degree exactly two or three.
In addition, all our  results for connected planar graphs  easily extend to arbitrary planar graphs. This is because the expected size of the largest component in a random planar graph is $n-O(1)$ (see \cite{3-conn}). For simplicity, we state our results only for arbitrary planar graphs.

It is natural to ask why we stop at minimum degree three. The reason is that
there seems to be no combinatorial decomposition allowing to deal with planar graphs of minimum degree four or five (a planar graph has always a vertex of degree at most five). It is already an open problem to enumerate 4-regular planar graphs. In contrast, the enumeration of cubic planar graphs was  solved in \cite{cubic}.

The contents of the paper are as follows. In Section \ref{sec:prelim} we review some technical preliminaries needed in the paper.
In Section \ref{sec:maps} we find similar results for planar maps, that is, connected planar graphs with a fixed embedding. They are simpler to derive and serve as a preparation for  the results on planar graphs, while at the same time they appear to be  new and interesting by themselves.
In Section \ref{sec:equations} we find equations linking the generating functions of connected graphs, 2-graphs and 3-graphs; to this end we must consider multigraphs as well as simple graphs. In Section~\ref{sec:graphs} we use singularity analysis in order to prove our main results on asymptotic enumeration and properties of random planar 2-graphs and 3-graphs.
The analysis of the distribution of the degree of the root, which is technically  more involved, is deferred to Section~\ref{sec:degree}. We conclude with some remarks and open problems.

\section{Preliminaries}\label{sec:prelim}

We assume familiarity with the basic results of analytic combinatorics as described in \cite{flajolet}. 
Given a complex number $\rho \ne0$, a $\Delta$-domain at $\rho$ is an open set of the form 
$$
	\Delta(R,\phi) = \{  z \colon |z|<R, z \ne \rho, |\arg(z-\rho)|> \phi \}.
$$
A singularity of $f(z)$ is a point where $f(z)$ ceases to be analytic. 
A dominant singularity is one of minimum modulus. We say that $f(z)$ is $\Delta$-analytic at $\rho$ if it is anlytic in a $\Delta$-domain at $\rho$. 
We will need the following result  \cite[Corollary VI.1]{flajolet}. 

\begin{theorem}[Transfer Theorem]
	If $f(z)$  has a unique dominant singularity at 
	$\rho$ at which is $\Delta$-analytic and satisfies the estimate
$$
    f(z) \sim (1-z/\rho)^{-\alpha}, \qquad z \to \rho,
    $$
with $\alpha \not\in \{0,-1,-2,\dots \}$, then the coefficients of $f(z)$ satisfy
$$
    [z^n]f(z) \sim {n^{\alpha-1} \over \Gamma(\alpha)} \,\rho^{-n}.
$$
\end{theorem}

We also need  and a simplified version of \cite[Theorem IX.8]{flajolet}.
\begin{theorem}[Quasi-powers Theorem]
%
Let the $X_n$ be non-negative discrete random variables with probability generating functions $p_n(u)$. Assume that, uniformly in a fixed complex neighbourhood of $u = 1$ 
$$
 p_n(u)=A(u)·B(u)^n \left(1+O\left(\frac{1}{n}\right)\right),
$$
where $A(u), B(u)$ are analytic at $u = 1$ and $A(1) = B(1) = 1$. Assume that $B(u)$ satisfies $B''(1) + B'(1) - B'(1)^2 \ne 0$. 

Then the distribution of $X_n$ is, after standardization, asymptotically Gaussian, 
and the mean and variance satisfy 
$$
    \ex X_n \sim \left(\frac{B'(1)}{B(1)}\right) n,
    \qquad 
    \var X_n \sim \left({B''(1) \over B(1)} + {B'(1) \over B(1)}  - \left({B'(1) \over B(1)}\right)^2 \right) n.
$$
\end{theorem}In our applications we will have $B(u)=\rho(1)/\rho(u)$, where $\rho(u)$ 
will be the dominant singularity (as a function of $z$) of a bivariate generating function $f(z,u)$. The former expressions become then 
$$
    \ex X_n \sim \left({-\rho'(1) \over \rho(1)} \right) n,
    \qquad \var X_n \sim \left(-{\rho''(1) \over \rho(1)} - {\rho'(1) \over \rho(1)}      + \left({\rho'(1) \over \rho(1)}\right)^2 \right) n.
$$

In order to apply the former results we need to show that the corresponding generating functions are $\Delta$-analytic at suitable singularities. This is relatively simple for planar maps, since we have explicit algebraic expressions for the generating functions, but it is  rather more involved for planar graphs. The expressions obtained in Section \ref{sec:equations} are not enough for this purpose and we have to use alternative equations related to the decomposition of connected graphs into 2-connected components (see~Section \ref{sec:graphs}). Some of these derivations are rather long and are given in the Appendix.
 Several of the arguments we use may have applications in related situations where  $\Delta$-analyticity has to be guaranteed. 

In Section \ref{sec:graphs}  we need the following result from \cite{maxdegsp}. It deals with the maximum degree of random graphs and can be adapted to other extremal parameters
such as  the size of the largest tree attached to the core.
In can be thought of as a kind of `master theorem' for analyzing the maximum degree and related extremal parameters. 

\begin{theorem}\label{th:master}
Let $d_{n,k}$ denote the probability that a randomly selected vertex of a certain
class of random graphs of size $n$ has degree $k$, and let $d_{n,k,\ell}$ denote the probability that two
different randomly selected (ordered) vertices have degrees $k$ and $\ell$. Suppose that we have
the following properties.

\begin{enumerate}
\item
There exists a limiting degree distribution $\overline{d}_k$ $(k > 1)$ with an asymptotic behaviour of
the form
$$\log \overline{d}_k\sim k \log q \quad (k \to \infty),
$$
where $q$ is a real constant with $0 <q< 1$.

\item We have, as $n \to \infty, k \to \infty, \ell \to \infty$, and uniformly for $k,\ell \le    C \log n$ (for an arbitrary
constant $C > 0$)
$$
d_{n,k} \sim \overline{d}_k \quad \hbox{and}
 d_{n,k,\ell} \sim \overline{d}_k \overline{d}_\ell.
 $$
 
 \item 
There exists $\overline{q} < 1$ such that, uniformly for all $n, k, \ell \ge  1$,
$$
d_{n,k} = O(\overline{q}^k) \quad \hbox{and} \quad
d_{n,k} = O(\overline{q}^{k+\ell}).
$$
\end{enumerate}
Let $\Delta_n$ denote the maximum degree of a random graph of size $n$ in this class. Then
$$\frac{\Delta_n}{\log n} \to  \frac{1}{\log(1/q)} \quad \hbox{in probability},
$$
and
$$
\ex \Delta_n \sim \frac{1}{\log(1/q)} \log n \quad (n \to \infty). 
$$
\end{theorem}

\section{Planar maps}\label{sec:maps}

We recall that a planar map is a connected planar multigraph embedded in the plane up to homeomorphism. A map is rooted if one of its edges is distinguished and oriented. In this way a rooted map has a root edge and a root vertex (the tail of the root edge). We define the root face as the face to the right of the  root edge. A rooted map has no automorphisms, in the sense that every vertex, edge and face is distinguishable. From now on all maps are planar and rooted. We stress the fact  that maps may have loops and multiple edges.

The enumeration of rooted planar maps was started by Tutte in his seminal paper
 \cite{tutte}. Let $m_n$ be the number of rooted maps with $n$ edges, with the
  convention that $m_0=0$.
   Then
$$
    m_n = {2 \cdot 3^n \over (n+2)(n+1)} { 2n \choose n}, \quad n\geq 1
$$
The generating function $M(z) = \sum_{n\ge0} m_n z^n$ is equal to
\begin{equation}\label{th:Mn}
   M(z) = {18z-1 + (1-12z)^{3/2} \over 54z^2}-1.
\end{equation}
Either from the explicit formula or from the expression for $M(z)$ and the transfer theorem, it follows that
\begin{equation}\label{eq:estimatesmaps}
    m_n \sim {2 \over \sqrt\pi} \, n^{-5/2} 12^n.
\end{equation}
If $m_{n,k}$ is the number of maps with $n$ edges and degree of the root face equal to $k$, then $M(z,u) = \sum m_{n,k}  z^n u^k$ satisfies the equation
\begin{equation}\label{eq:maps}
    M(z,u) = zu^2(M(z,u)+1)^2 + uz \left({uM(z,u)-M(z,1)\over u-1}+1\right).
\end{equation}
By duality, $M(z,u)$ is also the generating function of maps
in which $u$ marks the degree of the root vertex. The empty map is not  included so that $m_0=0$. 

The core $\C$ of a map $\M$ is  obtained, as for graphs,
by removing repeatedly vertices of degree one,
so that $\C$ has minimum degree at least two (the core is empty if and only if $\M$ is a tree). Then 
 $\M$ is obtained from $\C$ by placing a planar tree
at each corner (pair of consecutive half-edges) of $\C$. This is equivalent
to replacing each edge with a non-empty planar tree rooted at an edge.
The number $t_n$ of planar trees with $n\geq 1$ edges
is equal to the $n$-th Catalan number
and the generating function $T(z) = \sum t_n z^n$ satisfies
$$
 T(z) = {1 \over 1-z(1+T(z))}-1.
    $$
We define a 2-map as a map with minimum degree at least two,
and a 3-map as a map with minimum degree at least three.
Let $h_n$ and $k_n$ be, respectively,
the number of 2-maps and 3-maps with $n$ edges.

\begin{theorem}\label{th:maps}
The  generating functions $H(z)$ and $K(z)$ of 2-maps and 3-maps, respectively,   are given by
$$
\begin{array}{ll}
  H(x) = \ds {1-x \over 1+x} \left(M\left({x \over (1+x)^2} \right) -x\right) =    x+3x^2+16x^3+96x^4+624x^5 + \cdots, \\
K(x) = \ds{H\ds\left({x\over 1+x}\right) -x \over 1+x} =
    2 z^2+9 z^3+47 z^4+278 z^5+ \cdots
\end{array}
$$
The following estimates hold:
\begin{equation}\label{eq:estimates23maps}
  h_n  \sim \kappa_2 n^{-5/2} (5+2\sqrt6)^n, \qquad
  k_n  \sim \kappa_3 n^{-5/2} (4+2\sqrt6)^n,
\end{equation}
where
$$\kappa_2 = \frac{2}{\sqrt{\pi}} \left(\frac{2}{3}\right)^{5/4}
\approx 0.6797,\quad
\kappa_3 = \frac{2}{\sqrt{\pi}}\left(4-4\sqrt{\frac{2}{3}}\right)^{5/2}
\approx 0.5209.
$$
\end{theorem}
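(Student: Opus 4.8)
I would establish the two closed-form identities first, by the core and kernel decompositions of maps, and then extract the asymptotics by singularity analysis of the resulting explicit functions. For $H$: starting from the core decomposition recalled above, a rooted planar map either is a plane tree (empty core), or is obtained in a unique way from its core --- a 2-map, which inherits a root edge --- by replacing every edge with a non-empty planar tree rooted at an edge, the root of the map being recorded inside the tree that replaces the root edge of the core. Since planar trees rooted at an edge with $n\ge1$ edges have generating function $T(z)$, translating this bijection into generating functions, while keeping careful track of how the root edge is encoded inside the distinguished tree and of the degenerate (plane-tree) case, leads to the functional equation
$$
   M(z) \;=\; T(z) \;+\; \frac{1+T(z)}{1-T(z)}\,H\bigl(T(z)\bigr).
$$
The key point is then that $T(z)=z(1+T(z))^2$, so $T$ has the explicit inverse $z=x/(1+x)^2$; setting $x=T(z)$ and solving for $H$ gives precisely $H(x)=\frac{1-x}{1+x}\bigl(M(x/(1+x)^2)-x\bigr)$.

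For $K$ the same scheme applies, now with the suppression of the degree-two vertices of a 2-map. A 2-map either is a cycle --- whose would-be kernel is a single loop, not a 3-map --- or is obtained uniquely from its kernel (a 3-map) by subdividing each edge into a path of length $\ge1$, the root of the 2-map lying on the path that replaces the root edge of the kernel. A path of length $\ge1$ has generating function $z/(1-z)$, while the distinguished path carries the extra choice of which of its edges is the root; this should give $H(z)=\frac{z}{1-z}+\frac{1}{1-z}\,K\bigl(\frac{z}{1-z}\bigr)$, and inverting via $z=x/(1+x)$ yields $K(x)=\bigl(H(x/(1+x))-x\bigr)/(1+x)$.

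For the estimates I would use singularity analysis. By \eqref{th:Mn}, $M$ is an explicit algebraic function, analytic in a $\Delta$-domain, with a unique dominant singularity at $z=1/12$, where $M(z)=P(z)+\tfrac83(1-12z)^{3/2}+o\bigl((1-12z)^{3/2}\bigr)$ for some $P$ analytic at $1/12$ --- this is what gives \eqref{eq:estimatesmaps}. Writing $\varphi(x)=x/(1+x)^2$, which increases on $x>0$ to its maximum $1/4>1/12$ at $x=1$, the dominant singularity of $H(x)=\frac{1-x}{1+x}(M(\varphi(x))-x)$ is the smallest positive root of $\varphi(x)=1/12$, i.e.\ of $x^2-10x+1=0$, namely $\rho_2=5-2\sqrt6$; note $\rho_2^{-1}=5+2\sqrt6$. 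At $\rho_2$ one has $\varphi'(\rho_2)=(1-\rho_2)/(1+\rho_2)^3\neq0$, so $\varphi$ is conformal there and carries a $\Delta$-domain around $\rho_2$ into one around $1/12$, while $1-12\varphi(x)=(x^2-10x+1)/(1+x)^2$ vanishes to first order at $\rho_2$ and the prefactor $\frac{1-x}{1+x}$ is analytic and nonzero there; hence $H$ has a singular expansion $(\text{analytic})+c_2(1-x/\rho_2)^{3/2}+\cdots$, and the transfer theorem gives $h_n\sim\kappa_2 n^{-5/2}\rho_2^{-n}$. The constant $\kappa_2$ is obtained by propagating the coefficient $\tfrac83$ through the first-order vanishing of $1-12\varphi$ at $\rho_2$ and through the value $\frac{1-\rho_2}{1+\rho_2}=\frac{\sqrt6}{3}$ of the prefactor, then dividing by $\Gamma(-3/2)=\tfrac{4\sqrt\pi}{3}$, which yields $\kappa_2=\tfrac{2}{\sqrt\pi}(2/3)^{5/4}$. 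The estimate for $k_n$ follows by the same argument applied to $K(x)=(H(x/(1+x))-x)/(1+x)$: the increasing map $x\mapsto x/(1+x)$ meets $\rho_2$ at $\rho_3=\rho_2/(1-\rho_2)=(\sqrt6-2)/4$ with $\rho_3^{-1}=4+2\sqrt6$, it is conformal there, $1/(1+x)$ is analytic and nonzero, and $H$ has a $3/2$-type singularity at $\rho_2$, so $k_n\sim\kappa_3 n^{-5/2}\rho_3^{-n}$ with $\kappa_3=\tfrac{2}{\sqrt\pi}(4-4\sqrt{2/3})^{5/2}$ after tracking the constants.

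The analytic half is routine once the identities are in hand, and the two inversions are essentially forced, since $T(z)$ and $z/(1-z)$ are rational in disguise. The main obstacle is the combinatorics behind the functional equations: making the core and kernel decompositions literally bijective, and in particular fixing exactly how the root edge of the map (resp.\ of the 2-map) is recorded after the decomposition and isolating the degenerate families (plane trees; cycles). That bookkeeping is precisely what produces the correction terms $-x$, $\tfrac{1-x}{1+x}$ and $\tfrac1{1+x}$, and pinning them down exactly --- not merely up to an additive analytic error --- is where the real care lies.
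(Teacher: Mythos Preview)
Your proposal is correct and follows essentially the same route as the paper: the same two functional equations $M(z)=T(z)+\frac{1+T(z)}{1-T(z)}H(T(z))$ and $H(z)=\frac{z}{1-z}+\frac{1}{1-z}K\bigl(\frac{z}{1-z}\bigr)$, inverted via $x=T(z)$ and $x=z/(1-z)$, followed by the same singularity analysis (locating $\rho_2=5-2\sqrt6$ and $\rho_3=\rho_2/(1-\rho_2)$, propagating the $3/2$-singularity of $M$ at $1/12$, and applying the transfer theorem). Your treatment of the analytic side is in fact slightly more explicit than the paper's (you check conformality of $\varphi$ at $\rho_2$), while the paper is more explicit on the combinatorics: it interprets the factor $\frac{1+T}{1-T}=1+\frac{2T}{1-T}$ as distinguishing whether the root lies in the core or in a pendant tree attached at the root corner, which is exactly the ``bookkeeping'' you flag as the main obstacle.
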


\begin{proof}
The decomposition of a map into its core and the collection of trees attached to the corners implies the following equation:
\begin{equation}\label{eq:MH}
    M(z) = T(z)+ H\left(T(z)\right){1+T(z)\over 1-T(z)}   .
\end{equation}
The first summand corresponds to the case where the map is a tree, and the second one where the core is non-empty:
each edge is replaced with a non-empty tree whose root corresponds
to the original edge. The factor
$$
{1+T(z) \over  1-T(z)} = 1+{2\,T(z) \over 1-T(z)}
$$
is interpreted as follows.
The first summand  corresponds to the case where
the root of the map belongs to the core, and the second one to the
case where it is in a pendant rooted tree $\tau$, which we place at the
left-back corner of the root edge of the core. In this case there
is a non-empty sequence of non-empty trees from the root
edge~$e$ of $\tau$ to the root edge of the core, and  the factor $2$
distinguishes the two possible orientations of $e$.

In order to invert the former relation let $x=T(z)$, so that
$$
z={x\over(1+x)^2}.
$$
We obtain
\begin{equation}\label{eq:HM}
    H(x) = {1-x \over 1+x} \left(M\left({x \over (1+x)^2} \right) -x\right).
\end{equation}

\begin{figure}[thb]
\begin{center}
\begin{tabular}{|c|c|c|}
\hline
&&\\
$\M$ & $\C$ & $\K$ \\
\begin{tikzpicture}[scale=1.1, auto,swap]
    \foreach \pos/\name in {{(0,0)/1}, {(2,0)/2}, {(2,3)/3},
        {(1,3)/4}, {(1,3.5)/5}, {(0,3)/6}, {(0,3.5)/7},
		{(-0.5,3)/8}, {(-0.5,2)/9}, {(0,2)/10}, {(-0.5,1)/11},
		{(0,1)/12}, {(-0.5,0.5)/13}, {(0.5,2.25)/14}, {(1,2.5)/15},
		{(1,1)/16}, {(1,1.5)/17}, {(1.5,1.5)/18}, {(1.33,1)/19},
		{(1.66,1)/20}}
        \node[selected point] (\name) at \pos{};
	\foreach\source/\dest in {1/2,1/12,1/17,2/3,3/4,3/17,4/6,6/7,6/8,7/8,
	6/14,6/10,9/10,10/12,11/12,12/13,14/15,14/17,16/17,17/18,18/19,18/20}
	\path[selected edge](\source) -- (\dest);
	\begin{scope}[very thick, ->]
	  \draw (0.99,0)--(1,0);
	\end{scope}
	\path[selected edge](2) edge [bend right] (3);
	\path[selected edge](4) edge [bend right] (5);
	\path[selected edge](4) edge [bend left] (5);
	\path[selected edge, every loop/.style={looseness=30}] (3)
	edge [in=0,out=90,loop] (3);
\end{tikzpicture}
 &
\begin{tikzpicture}[scale=1.1, auto,swap]
    \foreach \pos/\name in {{(0,0)/1}, {(2,0)/2}, {(2,3)/3},
        {(1,3)/4}, {(1,3.5)/5}, {(0,3)/6}, {(0,3.5)/7},
		{(-0.5,3)/8}, {(0,2)/10},
		{(0,1)/12}, {(0.5,2.25)/14},
		{(1,1.5)/17}}
        \node[selected point] (\name) at \pos{};
    \foreach \pos/\name in {{(-0.5,2)/9}, {(-0.5,1)/11},
		{(-0.5,0.5)/13}, {(1,2.5)/15},
		{(1,1)/16}, {(1.5,1.5)/18}, {(1.33,1)/19},
		{(1.66,1)/20}}
        \node[point] (\name) at \pos{};
	\foreach\source/\dest in {1/2,1/12,1/17,2/3,3/4,3/17,4/6,6/7,6/8,7/8,
	6/14,6/10,10/12,14/17}
	\path[selected edge](\source) -- (\dest);
	\foreach\source/\dest in {9/10,11/12,12/13,14/15,16/17,17/18,18/19,18/20}
	\path[edge](\source) -- (\dest);
	\begin{scope}[very thick, ->]
	  \draw (0.99,0)--(1,0);
	\end{scope}
	\path[selected edge](2) edge [bend right] (3);
	\path[selected edge](4) edge [bend right] (5);
	\path[selected edge](4) edge [bend left] (5);
	\path[selected edge, every loop/.style={looseness=30}] (3)
	edge [in=0,out=90,loop] (3);
\end{tikzpicture}
&
\begin{tikzpicture}[scale=1.1, auto,swap]
    \foreach \pos/\name in {{(0,0)/1}, {(2,0)/2}, {(2,3)/3},
        {(1,3)/4}, {(0,3)/6},
		{(1,1.5)/17}}
        \node[selected point] (\name) at \pos{};
    \foreach \pos/\name in {{(1,3.5)/5}, {(0,3.5)/7},
		{(-0.5,3)/8}, {(0,2)/10},
		{(0,1)/12}, {(0.5,2.25)/14}, {(-0.5,2)/9}, {(-0.5,1)/11},
		{(-0.5,0.5)/13}, {(1,2.5)/15},
		{(1,1)/16}, {(1.5,1.5)/18}, {(1.33,1)/19},
		{(1.66,1)/20}}
        \node[point] (\name) at \pos{};
	\foreach\source/\dest in {1/2,1/6,1/17,2/3,3/4,3/17,4/6,
	6/17}
	\path[strong edge](\source) -- (\dest);
	\foreach\source/\dest in {9/10,11/12,12/13,14/15,16/17,17/18,18/19,18/20,
	6/7,7/8,6/8}
	\path[edge](\source) -- (\dest);
	\begin{scope}[very thick, ->]
	  \draw (0.99,0)--(1,0);
	\end{scope}
	\path[strong edge](2) edge [bend right] (3);
	\path[edge](4) edge [bend right] (5);
	\path[edge](4) edge [bend left] (5);
	\path[strong edge, every loop/.style={looseness=30}]
	(3)edge [in=0,out=90,loop] (3);
	\path[strong edge, every loop/.style={looseness=30}] (6)
	edge [in=180,out=90,loop] (6);
	\path[strong edge, every loop/.style={looseness=30}] (4)
	edge [in=45,out=135,loop] (4);
\end{tikzpicture}
\\ \hline
\end{tabular}

\end{center}
\caption{Core and kernel of a map.}
\label{fig:map}
\end{figure}
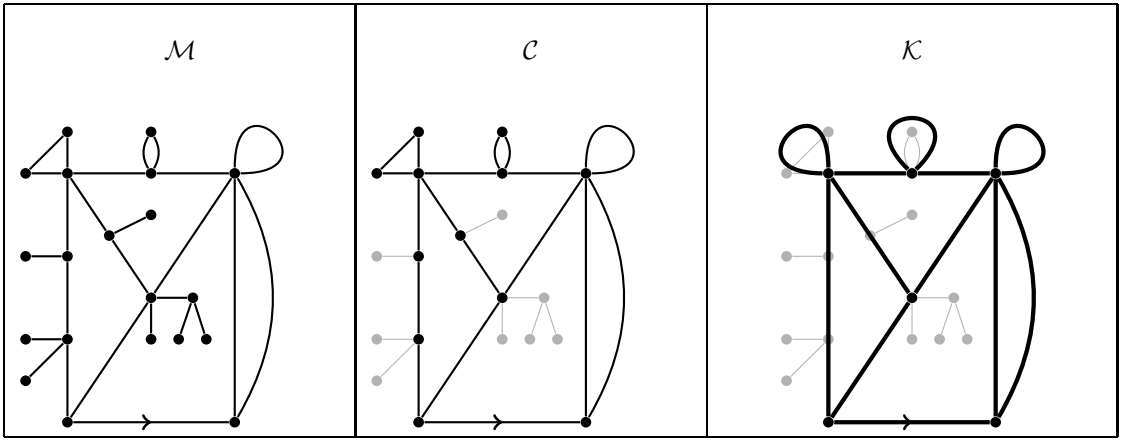

Let now $\C$ be a 2-map. The kernel $\K$ of $\C$ is defined as follows: replace every maximal path of vertices of degree
two in $\C$ with a single edge (see Figure \ref{fig:map}). Clearly $\K$ is a 3-map and $\C$ can be obtained by replacing edges in $\K$ with paths.
It follows that
\begin{equation}\label{eq:HK}
    H(z) = K\left({z \over 1-z} \right){1 \over 1-z}+ {z \over 1-z}.
\end{equation}
The first term corresponds to the substitution of paths for edges, and the extra factor $1/(1-z)$  indicates where to locate the new root edge
in the path replacing the original root edge. The last term corresponds to cycles, whose kernel is empty. Inverting the relation $x=z/(1-z)$ we obtain
\begin{equation}\label{eq:KH}
    K(x) = \ds{H\ds\left({x\over 1+x}\right) -x \over 1+x}.
\end{equation}

In order to obtain asymptotic estimates for $h_n$ and $k_n$ we need to locate the dominant singularities of $H(z)$ and $K(z)$ and show that these functions are analytic on suitable $\Delta$-domains.
$M(z)$ has a unique singularity  at  $\rho = 1/12$ and is analytic in $\mathbb{C}$ minus the ray $[1/12,+\infty)$, and $T(z)$ is singular only at $1/4$. Hence $H(z)$ has a singularity at $\sigma = \rho T(\rho)^2 = 5 - 2 \sqrt6$. We show next that $H(z)$ is analytic in $|x| < \sigma$ and has no other singularities in $|x|=\sigma$. By continuity it is $\Delta$-analytic at $\sigma$. 

From Equation (\ref{eq:HM}), the singularities of $H$ are at  $-1$ and at the  points $x$
where $t={x/(1+x)^2}\in [1/12,\infty)$. 
We show that these points either satisfy $|x|\ge1$ or 
belong to the real segment $[\sigma,1)$. If we solve the equation for $x$ we get
$$
x={1-2t\pm\sqrt{1-4t}\over 2t},\qquad t\in [1/12,\infty).
$$
We analyze two cases. For $t > 1/4$ we can rewrite $x = (1-2t	\pm i\sqrt{4t-1})/ 2t$ and obtain $|x|=1$. 
When $1/12 \le t \le 1/4$, $x$ must be real.
Consider the solution $x(t)= (1-2t+\sqrt{1-4t})/ 2t$. 
It is non-increasing since the derivative 
$$
x'(t) = -{1-2t+\sqrt{1-4t}
\over 2\sqrt{1-4t}\cdot t^2}
$$
is negative. Since $x(1/4)=1$ it follows that $x\geq 1$.
For the solution $x(t)= (1-2t-\sqrt{1-4t})/ 2t$ the derivative
is positive and $x(1/12) = \sigma$. Hence $x\ge\sigma$.

From Equation (\ref{eq:KH}) it follows that $K(x)$ has a singularity at 
 $\tau = \sigma/(1-\sigma) = (\sqrt6-2)/4$.  A similar argument as before shows that $K(x)$ is   $\Delta$-analytic at $\tau$.

The singular expansion of $M(z)$ at the singularity $z=1/12$ can be obtained directly from the explicit formula (\ref{th:Mn}), and is equal to
$$
    M(z) = {1 \over 3} -{4\over 3}Z^2 + {8\over 3}Z^3 + O(Z^4),
$$
where $Z =\sqrt{1-12z}$.
Plugging this expression into (\ref{eq:HM}) and expanding gives
$$
    H(x) = H_0 +H_2 X^2 + \frac{8}{3}\left(\frac{2}{3}\right)^{5/4}
     X^3 + O(X^4),
$$
where now $X=\sqrt{1-x/\sigma}$.
A similar computation using  (\ref{eq:KH}) gives
$$
    K(x) = K_0 +K_2 X^2 +
    \frac{8}{3}\left(4-4\sqrt{\frac{2}{3}}\right)^{5/2} X^3 + O(X^4),
$$
where  $X=\sqrt{1-x/\tau}$.

The estimates for $h_n$ and $k_n$ follow by the transfer theorem and  the value $\Gamma(-3/2)=4\sqrt\pi/3$.
\end{proof}

\noindent
For future reference we display  the dominant singularities for 2- and 3-maps, respectively:
$$
    \sigma = 5-2\sqrt6, \qquad\tau = {\sqrt6-2 \over 4}.
    $$

Our next result is a limit law for the size of the core and the kernel in random maps.

\begin{theorem}
The size $X_n$ of the core of a random map with $n$ edges, and the size $Y_n$ of the kernel of  a random 2-map with  $n$ edges
are asymptotically Gaussian with
$$
\renewcommand{\arraystretch}{2}
\begin{array}{ll}
    \ex  X_n \sim \ds{\sqrt 6 \over 3}n \approx 0.8165n, &  \quad   \var X_n \sim \ds\frac{n}{6} \approx 0.1667n,\\
    \ex Y_n \sim (2\sqrt6 -4)n \approx 0.8990n, &  \quad \var Y_n
    \sim (18\sqrt{6}-44)n\approx 0.0908n.\\
\end{array}
        $$
The size $Z_n$ of the kernel of a random map with $n$ edges is also
asymptotically Gaussian with
$$
    \ex Z_n \sim
    \left(4-{4\sqrt6\over3}\right)n \approx 0.7340n,\qquad
    \var Z_n \sim \left({128\over 3} - {52\over 3}\sqrt{6}\right)n
    \approx 0.2088n.
    $$
\end{theorem}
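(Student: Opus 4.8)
The plan is to lift each of the combinatorial decompositions used in the proof of Theorem~\ref{th:maps} to a bivariate generating function in which a new variable $u$ marks the edges of the substructure whose size we want, and then to apply the Quasi-powers Theorem. For the core of a map, in~(\ref{eq:MH}) each of the $m$ edges of the core is replaced by a non-empty tree, so we mark core edges by inserting $u$ only into the argument of $H$:
$$
  M(z,u) \;=\; T(z) \;+\; H\bigl(u\,T(z)\bigr)\,\frac{1+T(z)}{1-T(z)},
$$
the rational factor recording merely the placement of the root and of the pendant trees, which are not part of the core. For the kernel of a $2$-map, in~(\ref{eq:HK}) each kernel edge is replaced by a path, so we put $u$ on the argument of $K$:
$$
  H(z,u) \;=\; K\!\left(\frac{u z}{1-z}\right)\frac{1}{1-z} \;+\; \frac{z}{1-z},
$$
where the extra factor $1/(1-z)$ only re-roots inside the path that replaces the root edge of the kernel. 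Finally, since the kernel of a map is the kernel of its core, substituting the bivariate function $H(\,\cdot\,,u)$ for $H$ in the core decomposition~(\ref{eq:MH}) gives
$$
  M(z,u) \;=\; T(z) \;+\; \left[\,K\!\left(\frac{u\,T(z)}{1-T(z)}\right)\frac{1}{1-T(z)} \;+\; \frac{T(z)}{1-T(z)}\,\right]\frac{1+T(z)}{1-T(z)}.
$$
In each case $M(z,1)=M(z)$ and $H(z,1)=H(z)$, and the coefficient of $u^m z^n$ counts maps with $n$ edges whose core (resp. kernel) has $m$ edges.

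Next I would locate the dominant singularity $\rho(u)$ of $z\mapsto F(z,u)$ for $u$ in a neighbourhood of $1$. Since $T(z)$ is analytic for $z<1/4$, $T(1/4)=1$, and the rational prefactors above are analytic as long as $T(z)<1$, the only singularity of $F(z,u)$ near $u=1$ is the one created when the inner function $H$, resp. $K$, reaches its own dominant singularity $\sigma=5-2\sqrt6$, resp. $\tau=(\sqrt6-2)/4$; the Catalan singularity $z=1/4$ stays strictly further out. Solving the corresponding equations with the help of $z=T/(1+T)^2$ gives the explicit rational functions
$$
  \rho(u)=\frac{\sigma u}{(u+\sigma)^2},\qquad
  \rho(u)=\frac{\tau}{u+\tau},\qquad
  \rho(u)=\frac{\tau(u+\tau)}{(u+2\tau)^2}
$$
for the core of a map, the kernel of a $2$-map and the kernel of a map, and one checks that $\rho(1)=1/12$ in all three cases. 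Because the singular part of $H$ and of $K$ is of type $(1-\cdot/\sigma)^{3/2}$, resp. $(1-\cdot/\tau)^{3/2}$ (Theorem~\ref{th:maps}), and $T$ is analytic with $T'(\rho(u))\neq0$, each $F(z,u)$ admits a representation $F(z,u)=A(z,u)+B(z,u)\bigl(1-z/\rho(u)\bigr)^{3/2}$ in a bivariate $\Delta$-domain, which is exactly the hypothesis of the Quasi-powers Theorem with $\alpha=-3/2$; asymptotic normality with linear expectation and variance follows.

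It then only remains to substitute these $\rho(u)$ into $\ex X_n\sim(-\rho'(1)/\rho(1))\,n$ and $\var X_n\sim\bigl(-\rho''(1)/\rho(1)-\rho'(1)/\rho(1)+(\rho'(1)/\rho(1))^2\bigr)\,n$. For the core this gives $-\rho'(1)/\rho(1)=(1-\sigma)/(1+\sigma)=\sqrt6/3$ and, after simplification of the three terms, variance $1/6$; the two kernel cases are handled in the same way and yield the constants $2\sqrt6-4$, $18\sqrt6-44$ for $Y_n$ and $4-4\sqrt6/3$, $128/3-52\sqrt6/3$ for $Z_n$. These are routine differentiations of rational functions.

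The main obstacle is the bookkeeping in setting up the three bivariate equations correctly: one must make sure that $u$ is attached only to the trees, resp. paths, substituted for genuine core, resp. kernel, edges and not to the auxiliary trees and re-rooting factors, and, for the kernel of a map, that the two substitutions are composed in the right order. Once the equations are fixed, verifying that the rational prefactors do not produce a singularity closer to the origin than $\rho(u)$ and that the singular representation is uniform for $u$ in a complex neighbourhood of $1$ is straightforward, and everything else is mechanical.
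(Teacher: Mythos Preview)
Your proposal is correct and follows essentially the same approach as the paper: the same bivariate extensions of (\ref{eq:MH}) and (\ref{eq:HK}) are set up, the same singularity functions $\rho(u)=\sigma u/(u+\sigma)^2$ and $\rho(u)=\tau/(u+\tau)$ are derived, and the Quasi-powers Theorem is applied in the same way. For $Z_n$ you in fact go a little further than the paper, which merely says ``combining equations (\ref{eq:MHu}) and (\ref{eq:HKu})'', by writing down the composed equation and the explicit singularity $\rho(u)=\tau(u+\tau)/(u+2\tau)^2$; this is correct and yields the stated constants.
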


\begin{proof}
If $u$ marks the size of the core in maps, then an immediate extension of (\ref{eq:MH}) yields
\begin{equation}\label{eq:MHu}
    M(z,u) = H\left(uT(z)\right){1+T(z)\over 1-T(z)}   +T(z).
\end{equation}
It follows that a dominant singularity $\xi(u)$ of the univariate function $z \mapsto M(z,u)$ is given by
$uT(\xi(u))=\sigma$. Inverting this relation we obtain 
$$
    \xi(u) = {\sigma u\over (\sigma+u)^2}.
$$
Consider $u \in \mathbb{C}$ close to 1. If $|z| = \xi(u)$
but $z \neq \xi(u)$ then $M(z,u)$ is analytic at $z$. Indeed,
for such a $|z|$ we have:
$$
|uT(z)| = |u||T(z)| < |u|T(|z|) = |u|{\sigma \over |u|} = \sigma.
$$
Now we can apply the quasi-powers theorem, so that the distribution is asymptotically Gaussian with linear expectation and variance.
An easy calculation gives
$$
         - {\xi'(1) \over \xi(1)} = {\sqrt6 \over 3}, \qquad
     -{\xi''(1) \over \xi(1)} - {\xi'(1) \over \xi(1)}
     + \left({\xi'(1) \over \xi(1)}\right)^2 = {1\over 6}.
$$

If now $u$ marks the size of the kernel in 2-maps then an extension of (\ref{eq:HK}) gives
\begin{equation}\label{eq:HKu}
    H(z,u) = K\left({uz \over 1-z} \right){1 \over 1-z}+ {z \over 1-z}.
\end{equation}
A dominant  singularity $\chi(u)$ of  $z \mapsto K(z,u)$ is now given by
$$
    \chi(u) = {\tau \over \tau+|u|}.
    $$   
Again, for $u_0$ close enough to 1 the generating function $H(z,u_0)$ can be extended
to a $\Delta$-domain with inner radius $\chi(u_0)$. As before, for
$z$ with $|z|=\chi(u_0)$ but $z \neq \chi(u_0)$ we have:
$$
\left| u_0 {z\over 1-z}\right| = |u_0| \left|{z\over 1-z}\right|
< |u_0| {|z|\over 1-|z|} = |u_0|{\tau\over |u_0|} = \tau.
$$
Therefore the quasi-powers theorem applies and we have
$$
     - {\chi'(1) \over \chi(1)} = {2\sqrt6 -4}, \qquad
     -{\chi''(1) \over \chi(1)} - {\chi'(1) \over \chi(1)}
     + \left({\chi'(1) \over \chi(1)}\right)^2 = 18\sqrt{6}-44.
$$

The last statement concerning $Z_n$ follows by combining equations (\ref{eq:MHu}) and (\ref{eq:HKu}), obtaining an expression of $M(z,u)$ in terms
of $K(z)$, and repeating the same computations as before for the
corresponding singularity function.
\end{proof}

It is interesting to compare the previous result with the known results on the
largest block (2-connected component) \cite{airy}.
The expected size of the largest block in random maps is asymptotically $n/3$,
quite  smaller than the size of the core.
In other words, the core $\C$ consists of the largest  block $\B$
together with smaller blocks attached to $\B$
comprising in total  ${\sqrt6-1 \over 3}n \approx 0.4832n$ edges.
An explanation for this fact is the presence of a linear number of loops,
which belong to the core, but do not belong to the largest block.

\paragraph{Degree distribution.}
Our last result in this section deals with the distribution of the degree of the root vertex in 2-maps and 3-maps.
We let $M(z,u)$ be the GF of maps, where $z$ marks edges and $u$ marks the degree of the root vertex. Similarly, $H(z,u)$ is the GF for 2-maps, and  $T(z,u) = 1/(1-uz(T(z)+1))-1$ for trees, where again $u$ marks the degree of the root. Then we have
$$
    M(z,u) = H\left(T(z), {u (T(z,u)+1) \over T(z)+1}\right) (T(z,u)+1)
    +H(T(z)){T(z,u)\over 1-T(z)}+ T(z,u).
$$
The first term corresponds to the case where the root belongs to the core:
we replace each edge with a tree, and each edge incident
to the root vertex is replaced with a possibly empty tree, where
$u$ marks the degree of the root. The term $T(z)+1$ in the denominator
ensures that an edge is not replaced twice with a tree. The factor
$T(z,u)+1$ allows to place a possibly empty tree in the root corner.
The second term corresponds to the case where the root belongs to
a tree attached to the core: the denominator $1-T(z)$
encodes  a sequence of trees going from the core to the root edge.
The last term corresponds to the case where the core is empty, and therefore
the map is a tree.

If we change variables $x=T(z)$ and $w = u(T(u,z)+1)/(T(z)+1)$, the inverse is
$$
    z ={x \over (1+x)^2}, \qquad u={w(1+x) \over 1+wx}.
$$
The former equation becomes
\begin{equation}\label{eq:2mapsdeg}
    H(x,w) = \ds{M\left(\ds{x \over (1+x)^2}, \ds{w(1+x) \over 1+wx}\right)
    \over 1+wx} -
    {wx \over 1+x} M\left({x\over (1+x)^2}\right) +{1\over 1+wx}+{wx^2\over 1-x} - 1.
\end{equation}
The first terms are
$$
H(x,u)=
{  w}^{2}x+ \left( {w}^{2}+2{w}^{4} \right) {x}^{2}+ \left(
3{w}^{2}+4{w}^{3}+ 4{w}^{4}+5{w}^{6} \right) {x}^{3}+
\cdots  $$
The relationship between $H(z,u)$ and $K(z,u)$ is simpler:
$$
    H(z,u) = K\left({z \over 1-z},u\right) +
    K\left({z \over 1-z}\right){zu^2 \over 1-z} + {zu^2 \over 1-z}.
$$
Inverting gives
\begin{equation}\label{eq:3mapsdeg}
K(x,u) = H\left({x \over 1+x},u\right)
-{xu^2\over 1+x}H\left({x\over 1+x}\right) - {xu^2\over 1+x},
\end{equation}
and the first terms are
$$
K(z,u)= 2u^4z^2+(4u^3+5u^6)z^3+(9u^3+9u^4+15u^5+14u^8)z^4 +  \cdots
$$

In order to analyze $H(z,u)$ and $K(z,u)$ we need the expansion of $M(z,u)$ near the singularity $\rho=1/12.$ Notice that the singularity does not depend on $u$ for $u\sim1$, hence the anliticity in a $\Delta$-domain is granted. 
As we have seen, the expansion of $M(z)$ near $z=1/12$ is
$$
      M(z)=  {1 \over 3} -{4 \over 3}Z^2 + {8 \over 3}Z^3 + O(Z^4),
$$where $Z=\sqrt{1-12z}$. Since $M(z,u)$ satisfies (\ref{eq:maps}) we obtain
\begin{equation}\label{eq:singMapsdeg}
M(z,u) = M_0(u) + M_2(u)Z^2 + M_3(u)Z^3 + O(Z^4).
\end{equation}
A simple computation by indeterminate coefficients gives
$$
    M_3(u) = {8u \over \sqrt{3(2+u)(6-5u)^3}}.
$$
The limiting probability that a random map has a root vertex (or face) of
degree $k$ is equal to
$$
    p_M(k)=  {[u^k][z^n] M(z,u) \over [z^n]M(z)}.
$$
Both coefficients can be estimated using transfer theorems and we get that the probability
generating function of the distribution is given by

\begin{equation}\label{eq:distMaps}
p_M(u) = \sum p_M(k) u^k = {M_3(u) \over M_3(1)} = {\frac {u\sqrt
{3}}{\sqrt { \left( 2+u \right)  \left( 6-5\,u \right) ^{3}}}}.
\end{equation}

Our goal is to obtain analogous results for 2-maps and 3-maps.

\begin{theorem}\label{degree-maps}
Let $p_M(u)$ be as before, and let $p_H(u)$ and $p_K(u)$ be the probability generating functions for the distribution of the root degree in 2-maps and 3-maps, respectively.
Then we have
$$
p_H(u) =  {p_M\left( \ds{u(1+\sigma)\over 1+u\sigma}\right)
\ds{1+\sigma \over 1+u\sigma} -u\sigma \over 1- \sigma},
$$
$$
p_K(u) = {p_H(u)-u^2\sigma\over 1-\sigma},
$$
where $\sigma = 5-2\sqrt{6}$, as in Theorem~\ref{th:maps}.
Furthermore, the limiting probabilities that the degree of the root vertex is equal to $k$ exist, both for 2-maps and 3-maps, and are asymptotically
$$
\renewcommand*{\arraystretch}{1.5}
\begin{array}{l}
 p_H(k) \sim \nu_2 k^{1/2} w_H^k, \qquad  \\
    p_M(k) \sim \nu_3 k^{1/2} w_K^k,
 \end{array}
$$
where $w_H = w_K = \sqrt{2/3} \approx 0.8165$,
$\nu_2 = \sqrt{3(1-\sigma)/ (64\pi)}
\approx 0.1158$,
$\nu_3 =  \sqrt{3/ (64\pi(1-\sigma))} \approx 0.1288$.
\end{theorem}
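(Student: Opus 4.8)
The plan is to carry the functional equations (\ref{eq:2mapsdeg}) and (\ref{eq:3mapsdeg}) over to the level of singular expansions. Since $u$ marks the root degree, a local parameter whose distribution in a random map decays exponentially — and crucially $p_M(u)$ is analytic on a disc of radius $6/5>1$ — the dominant singularity of $z\mapsto H(z,u)$ in the edge variable stays fixed at $z=\sigma=5-2\sqrt6$ for $u$ in a complex neighbourhood of $1$, and similarly that of $z\mapsto K(z,u)$ stays at $z=\tau=\sigma/(1-\sigma)$. By the transfer theorem the limiting probability of root degree $k$ is then the $[u^k]$-coefficient of the ratio between the coefficient of the dominant singular term of $H(z,u)$ (resp. $K(z,u)$) and the same quantity at $u=1$. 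Concretely, $p_H(u)$ and $p_K(u)$ are obtained by replacing $M$ and $H$ in (\ref{eq:2mapsdeg}) and (\ref{eq:3mapsdeg}) by their singular parts and normalising.

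For $p_H$ I would first note that as $x\to\sigma$ the inner argument $x/(1+x)^2\to1/12$, because $\sigma$ is the small root of $12x=(1+x)^2$, and that
$$
Z=\sqrt{1-\tfrac{12x}{(1+x)^2}}=\frac{\sqrt{(\sigma-x)(\bar\sigma-x)}}{1+x},\qquad \bar\sigma=5+2\sqrt6,
$$
so $Z^3=(1-x/\sigma)^{3/2}g(x)$ with $g$ analytic and $g(\sigma)$ an explicit positive constant. Plugging the expansion (\ref{eq:singMapsdeg}) of $M(z,u)$ into (\ref{eq:2mapsdeg}), the $Z^0$ and $Z^2$ pieces of both occurrences of $M$, together with $1/(1+ux)$, $ux^2/(1-x)$ and the constant $-1$, are analytic at $x=\sigma$, whereas the two $Z^3$ pieces assemble into the dominant singular term $\propto(1-x/\sigma)^{3/2}$ with coefficient, up to the common factor $g(\sigma)$,
$$
\frac{M_3(v_\sigma)}{1+u\sigma}-\frac{u\sigma}{1+\sigma}\,M_3(1),\qquad v_\sigma=\frac{u(1+\sigma)}{1+u\sigma},
$$
where $v_\sigma$ is the value at $x=\sigma$ of the second argument of $M$ (analyticity of $M_3$ in its argument makes the $x$-dependence harmless). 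Dividing by the value at $u=1$, where $v_\sigma=1$ and the quantity equals $(1-\sigma)M_3(1)/(1+\sigma)$, and using $p_M=M_3/M_3(1)$, gives the stated formula for $p_H(u)$; here $p_H(1)=1$ follows from $p_M(1)=1$, and nonnegativity of the coefficients from the combinatorial meaning. The identical argument applied to (\ref{eq:3mapsdeg}), now expanding at $x=\tau$ where $x/(1+x)\to\sigma$ and using $1+\tau=1/(1-\sigma)$ and $\tau=\sigma/(1-\sigma)$, yields $p_K(u)=(p_H(u)-u^2\sigma)/(1-\sigma)$.

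For the tail estimates I would analyse the explicit function $p_H(u)$. Since $p_M(v)=v\sqrt3/\sqrt{(2+v)(6-5v)^3}$ has its dominant singularity at $v=6/5$, the dominant positive singularity of $p_H$ is where $v_\sigma=6/5$; solving $u(1+\sigma)/(1+u\sigma)=6/5$ and using $5-\sigma=2\sqrt6$ gives $u=6/(2\sqrt6)=\sqrt{3/2}=1/w_H$, the remaining factors $\tfrac{1+\sigma}{1+u\sigma}$ and $u\sigma$ being analytic there. Near this point $6-5v_\sigma$ vanishes linearly in $1-w_Hu$, so $p_H(u)\sim C_2(1-w_Hu)^{-3/2}$; the transfer theorem with $\Gamma(3/2)=\sqrt\pi/2$ then gives $p_H(k)\sim(2C_2/\sqrt\pi)\,k^{1/2}w_H^k$, and carrying the explicit constants through the substitution produces $\nu_2=\sqrt{3(1-\sigma)/(64\pi)}$. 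For $p_K$, the extra summand $u^2\sigma/(1-\sigma)$ is entire, so $p_K$ inherits the singularity at $u=1/w_H$ with the singular part merely divided by $1-\sigma$; hence $w_K=w_H=\sqrt{2/3}$ and $\nu_3=\nu_2/(1-\sigma)=\sqrt{3/(64\pi(1-\sigma))}$.

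The main obstacle is the step glossed over in the first paragraph: showing that $H(z,u)$ and $K(z,u)$ genuinely admit, uniformly for $u$ in a fixed complex neighbourhood of $1$, singular expansions of the shape demanded by the transfer theorem, with the $z$-singularity pinned at $\sigma$ (resp. $\tau$). This reduces to (a) analyticity of $M_3(u)$, equivalently of $p_M(u)$, at $u=1$ — which holds because its radius of convergence $6/5$ exceeds $1$ — together with the bivariate $\Delta$-analyticity of $M(z,u)$ inherited from (\ref{eq:maps}); and (b) analyticity and non-vanishing of the rational prefactors $1+ux,\ 1+x,\ 1-x$ of (\ref{eq:2mapsdeg}) and $1+x$ of (\ref{eq:3mapsdeg}) at $x=\sigma,\tau$ for such $u$. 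Both are routine but need to be verified. A secondary, purely computational point is the bookkeeping of $g(\sigma)$ and the various rational factors required to pin $\nu_2$ and $\nu_3$ down exactly.
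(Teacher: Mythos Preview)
Your proposal is correct and follows essentially the same approach as the paper: substitute the singular expansion $M(z,u)=M_0(u)+M_2(u)Z^2+M_3(u)Z^3+O(Z^4)$ into the functional equations (\ref{eq:2mapsdeg}) and (\ref{eq:3mapsdeg}), extract the coefficient of the dominant singular term $(1-x/\sigma)^{3/2}$ (resp.\ $(1-x/\tau)^{3/2}$) as a function of $u$, normalise to obtain $p_H(u)=H_3(u)/H_3(1)$ and $p_K(u)=K_3(u)/K_3(1)$, and then analyse the $u$-singularity of these explicit functions via the transfer theorem. Your treatment is in fact slightly more careful than the paper's in making explicit why the $z$-singularity stays fixed for $u$ near $1$ (radius of convergence of $p_M$ exceeding $1$) and in tracking the analytic factor $g(x)$ relating $Z^3$ to $(1-x/\sigma)^{3/2}$.
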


The correction terms $u\sigma$ in $p_H(u)$ and $u^2\sigma$ in $p_K(u)$
are due to the fact, respectively, that 2-maps have no vertices of degree one and 3-maps no vertices of degree two.

\begin{proof}
Since $M(z,u)$ satisfies (\ref{eq:singMapsdeg}) and $H(x,w)$ satisfies
(\ref{eq:2mapsdeg}), we obtain
$$
H(z,u) = H_0(u)+H_2(u)Z^2+H_3(u)Z^3+O(Z^4),
$$
where $Z = \sqrt{1-z/\sigma}$, and $H_3(u)$ can be  computed as

$$
H_3(u) = \left({1-\sigma\over 1+\sigma}\right)^{3/2}
\left({M_3\left(u(1+\sigma)/(1+u\sigma)\right) \over 1+u\sigma}
-{M_3(1)u\sigma\over 1+\sigma}\right).
$$
The probability generating function of the distribution is given by
\begin{equation}\label{eq:prob2Maps}
p_H(u) = {H_3(u)\over H_3(1)} =
{p_M\left( \ds{u(1+\sigma)\over 1+u\sigma}\right)
\ds{1+\sigma \over 1+u\sigma} -u\sigma \over 1- \sigma},
\end{equation}
as claimed in the statement.

On the other hand,  by (\ref{eq:3mapsdeg}), $K(u,z)$ satisfies
$$
K(z,u) = K_0(u) + K_2(u) Z^2 + K_3(u) Z^3 + O(Z^4),
$$
where now $Z = \sqrt{1-z/\tau}$ and $K_3(u)$ is
$$
K_3(u) = \left({1\over 1+\tau}\right)^{3/2}
\left(H_3(u)-H_3(1){\sigma u^2}\right).
$$
The probability generating function of the distribution is given by

\begin{equation}\label{eq:prob3Maps}
p_K(u) = {K_3(u)\over K_3(1)} = {p_H(u)-u^2\sigma\over 1-\sigma}.
\end{equation}

The asymptotics of the distributions can be obtained from that of $p_M(u)$.
The singularity of $p_M(u)$ is at $u_M=6/5$, and its
expansion is computed  from the explicit formula
in (\ref{eq:distMaps}) as
\begin{equation}\label{eq:mapsexpansion}
p_M(u) = P_{-3}U^{-3} + O(U^{-2}),
\end{equation}
where $U = \sqrt{1-5u/6}$ and $P_{-3} = 1/(4\sqrt{10})$.
The singularity of $p_H$ and $p_K$ is obtained by solving
the equation
$$
\ds{u(1+\sigma)\over 1+u\sigma} = u_M = {6\over 5},
$$
giving $u_H = u_K = \sqrt{3/2}$. Hence, the
exponential growth constants are $w_H = w_K = \sqrt{2/3}$. The singular
expansion of $p_H(u)$ is obtained by composing
(\ref{eq:prob2Maps}) and (\ref{eq:mapsexpansion}), giving as a result
\begin{equation}\label{eq:2mapsexpansion}
p_H(u) = Q_{-3}U^{-3} + O(U^{-2}),
\end{equation}
where now $U = \sqrt{1-u\sqrt{2/3}}$, and
$Q_{-3} = P_{-3}\sqrt{15(1-\sigma)/8} = \sqrt{3(1-\sigma)}/16$.
The singular expansion of $p_K(u)$ is obtained by composing
(\ref{eq:prob3Maps}) and (\ref{eq:2mapsexpansion})
giving as a result
\begin{equation}\label{eq:3mapsexpansion}
p_K(u) = R_{-3}U^{-3} + O(U^{-2}),
\end{equation}
where $U$ is as before  and
$R_{-3} = Q_{-3}/(1-\sigma) = \sqrt{3/(1-\sigma)}/16$.

The estimates for $p_H(k)$ and $p_M(k)$ follow
by the transfer theorem, provided that the probability
generating functions can be extended to a $\Delta$-domain.
Since we know explicitly $p_M(u)$, we also know that
it is analytic at 
$D=\mathbb{C}\setminus(-\infty,-2]\cup[6/5,\infty)$.
By Equation (\ref{eq:prob2Maps}) we know that
if $u(1+\sigma)/(1+u\sigma)\in D$ then $p_H$ and $p_K$
are analytic at $u$. By inverting the expression
we can check that if $u(1+\sigma)/(1+u\sigma)\notin D$
then $u\in (-\infty,-1/(8-3\sqrt{6})]\cup[\sqrt{3/2},\infty)$,
and therefore $p_H$ and $p_K$ are analytic in a 
$\Delta$-domain.
\end{proof}

\section{Equations for 2-graphs and 3-graphs}\label{sec:equations}

In this section we find expressions for the generating functions of 2- and 3-graphs in terms of the generating function of connected graphs. The results are completely general and specialize to the generating functions of planar graphs, since a graph is planar if and only if its core its planar, and in turn the core is planar if and only if its kernel is planar.

Let $C(x,y)$ be the generating function of connected graphs, where $x$ marks vertices and $y$ marks edges. Denote by $H(x,y)$ and $K(x,y)$ the generating functions, respectively, of 2-graphs and 3-graphs.
We will find  equations of the form
$$
\begin{array}{ll}
H(x,y) &= C(A_1(x,y),B_1(x,y))+E_1(x,y) \\
K(x,y) &= C(A_2(x,y),B_2(x,y))+E_2(x,y),
\end{array}
$$ where
$A_i$, $B_i$ and $E_i$ are explicit functions.

From now on  all  graphs are labelled,
and all generating functions are of the exponential type.

\paragraph{2-graphs.}\label{subsec:cores}
Let $\mathcal{G}$ be a connected graph. The core
$\mathcal{C}$ of $\mathcal{G}$
is obtained by removing repeatedly vertices of degree one, so that
 $\mathcal{G}$ is obtained from $\mathcal{C}$
by replacing each vertex of $\mathcal{G}$ with a rooted
tree. The number $T_n$ of rooted trees
with $n$ edges is known to be $n^{n-1}$, and the
generating function $T(x) = \sum T_nx^n/n!$ satisfies
$$
T(x) = xe^{T(x)}.
$$
The core of $\mathcal{G}$ can be empty, in which  case
$\mathcal{G}$ must be an (unrooted) tree. The number $U_n$ of
unrooted trees is known to be $n^{n-2}$, and
the generating function $U(x) = \sum u_n x^n/n!$
is equal to
$$U(x) = T(x) - \frac{T(x)^2}{2}.
$$

\begin{theorem}\label{th:cores}
Let $h_n$ be the number of 2-graphs with $n$ vertices.
Then  $H(x) = \sum h_n x^n/n!$ is given by
\begin{equation}\label{eq:cores}
H(x) = C(xe^{-x})-x+\frac{x^2}{2}.
\end{equation}
\end{theorem}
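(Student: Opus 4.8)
The plan is to establish the core decomposition of a connected graph as an identity of exponential generating functions, and then to invert it using the functional equation $T(x)=xe^{T(x)}$.

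First I would justify the combinatorial identity $C(x)=H(T(x))+U(x)$ (this is Equation~(\ref{cores})). Every connected graph $\G$ has a well-defined core $\C$: the maximal subgraph of minimum degree at least two, obtained by iteratively deleting vertices of degree one. This process terminates, and its output does not depend on the order in which degree-one vertices are removed. If $\C\neq\emptyset$, then $\C$ is a $2$-graph and $\G$ is recovered uniquely by attaching to each vertex $v$ of $\C$ a rooted tree $\tau_v$ (the pendant part hanging from $v$), with $v$ serving as the root of $\tau_v$. Conversely, starting from a $2$-graph on $m$ vertices and attaching an arbitrary rooted tree at each of its vertices produces a connected graph whose core is \emph{exactly} the original $2$-graph: every former core vertex still has degree at least two after the substitution, while every newly added vertex lies inside a pendant subtree and is therefore deleted by the pruning. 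Attaching a rooted tree to each atom of $H$ is precisely the substitution $H(T(x))$, so graphs with nonempty core are counted by $H(T(x))$. The only remaining case is $\C=\emptyset$, which happens exactly when $\G$ is an unrooted tree, and these are counted by $U(x)=T(x)-T(x)^2/2$. Summing the two cases gives $C(x)=H(T(x))+U(x)$.

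Next I would invert this relation. Since $T$ satisfies $T(x)=xe^{T(x)}$ with $T(0)=0$, the formal power series $xe^{-x}$ is the compositional inverse of $T$: substituting $x\mapsto xe^{-x}$ into $w=(xe^{-x})e^{w}$, one checks that $w=x$ is a solution, and by uniqueness of the power-series solution with zero constant term we get $T(xe^{-x})=x$. Plugging $xe^{-x}$ into $C(x)=H(T(x))+U(x)$ and using $U(xe^{-x})=T(xe^{-x})-T(xe^{-x})^2/2=x-x^2/2$ yields
$$
C(xe^{-x})=H(x)+x-\frac{x^2}{2},
$$
which rearranges to $H(x)=C(xe^{-x})-x+x^2/2$, as claimed.

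The substitution step is a one-line calculation, so the hard part is really the first step: verifying carefully that the pruning operation recovers precisely the core, that the tree-attachment map is a genuine bijection onto connected graphs with nonempty core, and that the empty-core case contributes exactly the unrooted-tree series $U(x)=T(x)-T(x)^2/2$. Once this bookkeeping is in place, the inversion via $T(xe^{-x})=x$ is immediate.
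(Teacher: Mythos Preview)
Your proposal is correct and follows essentially the same approach as the paper: establish the decomposition $C(x)=H(T(x))+U(x)$ from the core construction, then invert via $T(xe^{-x})=x$. You give more detail than the paper does on why the tree-attachment map is a bijection and why the empty-core case yields exactly $U(x)$, but the argument is the same.
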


\begin{proof}
The decomposition of a graph into its core and the attached rooted trees implies the following equation:
\begin{equation}\label{eq:corestoconnected}
    C(z) = H(T(z)) + U(z).
\end{equation}
The first summand corresponds to the case where the core is non-empty, and
the second summand corresponds to the case where the graph is a tree.
In order to invert the former relation let $x=T(z)$, so that
$$
z=xe^{-x}, \qquad U(z) = x-\frac{x^2}{2}.
$$
We obtain
$$
H(x) = C(xe^{-x})-x+\frac{x^2}{2} =
\frac{x^3}{3!}+10\frac{x^4}{4!}+252\frac{x^5}{5!}+\ldots
$$
\end{proof}

Equation~(\ref{eq:cores}) can be enriched
 by taking edges into account.
The generating functions $T(x,y)$ and $U(x,y)$ are easily
obtained as $T(x,y) = T(xy)/y$
and $U(x,y) = U(xy)/y$, and a  quick computation gives
\begin{equation}\label{eq:coresedges}
H(x,y) = C(xe^{-xy},y)-x+\frac{x^2y}{2} =
y^3\frac{x^3}{3!}
+(3y^4+6y^5+y^6)\frac{x^4}{4!}+\ldots
\end{equation}

\paragraph{3-graphs.}\label{subsec:3-graphs}
A multigraph is a graph where loops and
multiple edges are allowed. As in the case of simple
graphs, we define a $k$-multigraph as a connected multigraph in which the degree of each vertex is at least $k$. Let $\mathcal{\widetilde C}$ be a  2-multigraph.
The kernel $\mathcal{\widetilde K}$ of $\mathcal{\widetilde C}$
is defined as follows: replace every maximal path of vertices
of degree two in $\mathcal{\widetilde C}$ with a single
edge. Clearly $\mathcal{\widetilde K}$ is a  3-multigraph (unless $\mathcal{\widetilde C}$ is a cycle),
and $\mathcal{\widetilde C}$ can be recovered  by replacing
edges in $\mathcal{\widetilde K}$ with paths.

Let $\mathcal{\widetilde G}$ be a multigraph.
For each $i\geq 1$, let $\alpha_i$ be the number of vertices in
$\mathcal{\widetilde G}$ that are incident to exactly $i$ loops,
and let $\beta_i$ be the number of $i$-edges, that is,
edges of multiplicity $i$.
The weight of $\mathcal{\widetilde G}$ is defined as
$$w(\mathcal{\widetilde G}) = \prod_{i\geq 1} \left(\frac{1}{2^{i}i!}\right)^{\alpha_i}\cdot
 \prod_{i\geq 1} \left(\frac{1}{i!}\right)^{\beta_i}.
 $$
This definition is justified by the fact that when replacing
an $i$-edge with $i$ different paths, the order of the paths is irrelevant.
Similarly, when replacing a loop with a path, the orientation is irrelevant.
Note that the weight 
satisfies $0<w(\mathcal{\widetilde G})\leq 1$, and
moreover $w(\mathcal{\widetilde G}) = 1 $ if and only if
$\mathcal{\widetilde G}$ is simple.
With this definition, the sum $\widetilde K_n$ of the weights of all 3-multigraphs with $n$ vertices is finite.

As a preliminary  step to computing  the generating function of
 3-graphs, we establish  a relation between
 3-multigraphs and  connected multigraphs.
In order to distinguish between edges of different multiplicity, we  introduce  infinitely many variables as follows.
Let $\widetilde C_{n,m,l_1,l_2,\ldots}$ be the sum of the
weights of connected  multigraphs with $n$ vertices, $m$ loops
and $l_i$ $i$-edges for each $i\ge1$. Define similarly  $\widetilde K_{n,m,l_1,l_2,\ldots}$
for  3-multigraphs, and let
$$\widetilde C(x,z,y_1,y_2,\ldots) = \sum
\widetilde C_{n,m,l_1,l_2,\ldots} x^nz^my_1^{l_1}y_2^{l_2}\ldots/n!$$
and
$$\widetilde K(x,z,y_1,y_2,\ldots) = \sum
\widetilde K_{n,m,l_1,l_2,\ldots} x^nz^my_1^{l_1}y_2^{l_2}\ldots/n!.
$$

\begin{theorem}\label{th:kernel}
Let $\widetilde C(x,z,y_1,y_2,\ldots)$ and
$\widetilde K(x,z,y_1,y_2,\ldots)$ be as before. Then
\begin{equation}\label{eq:multikernels}
\begin{array}{ll}
\widetilde K (x,z,y_1,y_2,\ldots) = \\
\widetilde C\left(xe^{-x(y_1+s)},-sxy_1-xy_2+z,
s+y_0, s^2+2y_1s+y_2,\ldots,
\sum_{j=0}^k \binom{k}{j}y_js^{k-j},\ldots\right) \\
+E(x,y_1),
\end{array}
\end{equation}
where
$$y_0 = 1, \quad
s = -\frac{xy_1^2}{1+xy_1},\quad
E(x,y) = -x+\frac{x^2y}{2+2xy}-\ln \sqrt{1+xy}+\frac{xy}{2}-\frac{(xy)^2}{4}.
$$
\end{theorem}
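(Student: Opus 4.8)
The plan is to pass from a 3-multigraph to an arbitrary connected multigraph in two stages --- through the 2-multigraphs, i.e.\ the cores --- write down the corresponding substitution of generating functions at each stage, and then invert and compose these substitutions. The first stage is the core decomposition, a routine extension of Theorem~\ref{th:cores} with loops and multiple edges marked. Writing $\widetilde H$ for the (weighted) generating function of 2-multigraphs, attaching a rooted tree --- a simple graph all of whose edges are single edges --- at each vertex of a 2-multigraph gives
$$
\widetilde C(x,z,y_1,y_2,\dots)=\widetilde H\!\big(T(xy_1)/y_1,\,z,\,y_1,y_2,\dots\big)+U(xy_1)/y_1,
$$
the last summand being the case of a tree (with empty core), and only the vertex and single-edge variables being affected since trees carry no loops or multiple edges; inverting $x\mapsto T(xy_1)/y_1$, which as in Theorem~\ref{th:cores} amounts to the substitution $x\mapsto xe^{-xy_1}$, yields $\widetilde H(x,z,y_1,y_2,\dots)=\widetilde C(xe^{-xy_1},z,y_1,y_2,\dots)-x+\tfrac12 x^2y_1$.

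The real content is the second stage, the kernel decomposition: a 2-multigraph is obtained from its 3-multigraph kernel by replacing every edge with a path of length $\ge 1$ and every loop with a cycle of length $\ge 1$, length $1$ reproducing the original edge or loop. To convert this into a substitution of generating functions I would orient each kernel edge from its smaller to its larger endpoint label, so that it is replaced by a \emph{directed} path and thus carries no reversal symmetry; a loop, lacking a canonical orientation, gives an unoriented cycle, and its reversal symmetry --- together with the orderings of the several loops at a vertex and of the several parallel copies of an $i$-edge --- is cancelled exactly by the weight factors $(2^ii!)^{-1}$ and $(i!)^{-1}$ built into $w$. The delicate feature is that paths, or loop-cycles, of length exactly $1$ do not remain separate: several length-$1$ copies of an $i$-edge coalesce into an edge of smaller multiplicity, a length-$1$ loop-cycle stays a loop, and a length-$2$ loop-cycle becomes a double edge onto a new vertex --- so the edge- and loop-multiplicities of the 2-multigraph are fed by mixtures of kernel objects of different types. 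Sorting the resulting substitution by the number $j$ of surviving length-$1$ copies, the slot marking $k$-fold parallelism in $\widetilde C$ receives the binomial combination $\sum_{j=0}^{k}\binom kj y_j s^{k-j}$ (with $y_0=1$), and the loop slot of $\widetilde C$ receives the corrections $-xy_2$ and $-sxy_1$ that account for double edges and for longer cycles collapsing onto loops. Here $s$ is the series of a single nontrivial subdivided piece, and its closed form $s=-xy_1^2/(1+xy_1)$ comes out of inverting the ``insert a path of length $\ge1$'' operation --- the alternating sign being exactly the phenomenon of the inversion $z=x/(1+x)$ in the proof of Theorem~\ref{th:maps}.

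Putting the two inverted substitutions together yields the stated identity: the edge variable $y_1+s$ of the second stage, fed into the vertex argument $xe^{-xy_1}$ of the first, turns it into $xe^{-x(y_1+s)}$; the $k$-th edge argument becomes $\sum_{j=0}^{k}\binom kj y_js^{k-j}$; the loop argument becomes $z-xy_2-sxy_1$; and all the leftover pieces --- the tree term of the first stage, the cores that are themselves cycles (a loop, a double edge, or a cycle of length $\ge3$, counted with their automorphism weights, which is where $-\ln\sqrt{1+xy_1}$ originates), and the re-expression of both in the inverted variables --- combine into the single elementary function $E(x,y_1)$. I would then confirm the identity by matching low-order coefficients, that is, the weighted numbers of 3-multigraphs on up to four or five vertices. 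The main obstacle is precisely this second-stage bookkeeping: checking that the weights $(2^ii!)^{-1}$ and $(i!)^{-1}$ cancel exactly the symmetries created by path and cycle replacement, and --- harder --- disentangling the length-$1$ collapses so that the $k$-fold-parallelism slot inherits the right contributions from every $y_j$ with $j\le k$ and the loop slot inherits the right contributions from double edges and from longer cycles, all while keeping the signs straight through the inversion.
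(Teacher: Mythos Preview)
Your proposal is correct and follows essentially the same route as the paper: a two-stage passage through the weighted 2-multigraphs $\widetilde H$, with the core decomposition giving $\widetilde H(x,z,y_1,\dots)=\widetilde C(xe^{-xy_1},z,y_1,\dots)-x+\tfrac12 x^2y_1$ and the kernel decomposition (forward: subdivide edges and loops; then invert) giving $\widetilde K$ in terms of $\widetilde H$, after which composition yields the stated formula. The paper organizes this by isolating the $\widetilde K\leftrightarrow\widetilde H$ relation as a separate lemma and justifies the weight bookkeeping by direct counting of how many times the same 2-multigraph is produced, whereas you frame it via an orientation convention on kernel edges; these are equivalent explanations of why the factors $(i!)^{-1}$ and $(2^ii!)^{-1}$ exactly cancel the overcounting.
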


The proof of Theorem~\ref{th:kernel} is quite technical and is given  below.
As a corollary we obtain the generating function of  3-graphs.
Recall that $C(x,y)$ is the generating function of  connected graphs.

\begin{corollary}\label{cor:kernel}
Let $K_{n,m}$ be the number of  3-graphs with
$n$ vertices and $m$ edges. The
 generating function $K(x,y) = \sum K_{n,m}x^ny^m/n!$
is given by
\begin{equation}\label{eq:simplekernel}
K(x,y) = C\left(A(x,y),B(x,y) \right)+E(x,y),
\end{equation}
where
$$A(x,y) = xe^{(x^2y^3-2xy)/(2+2xy)}, \qquad
B(x,y) =  (y+1)e^{-xy^2/(1+xy)}-1, $$
and $E(x,y)$ is as in Theorem~\ref{th:kernel}.
\end{corollary}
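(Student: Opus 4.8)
The plan is to deduce the corollary from Theorem~\ref{th:kernel} by the specialization that suppresses all loops and multiple edges. Observe that a connected multigraph is simple exactly when it has no loops and no $i$-edges with $i\ge2$, and that a simple multigraph has weight $1$; consequently, setting $z=0$, $y_1=y$ and $y_i=0$ for every $i\ge2$ turns $\widetilde C(x,z,y_1,y_2,\dots)$ into $C(x,y)$ and $\widetilde K(x,z,y_1,y_2,\dots)$ into $K(x,y)$. So I would apply this substitution to both sides of (\ref{eq:multikernels}). On the left we obtain $K(x,y)$ directly, and the additive term $E(x,y_1)$ becomes $E(x,y)$; it then remains to evaluate the specialized right-hand argument of $\widetilde C$.

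First I would record the specialized arguments. Since $s=-xy_1^2/(1+xy_1)$ involves only $y_1$, it becomes $s=-xy^2/(1+xy)$. A one-line computation gives $x(y_1+s)=xy/(1+xy)$, so the vertex argument becomes $xe^{-xy/(1+xy)}$; the loop argument $-sxy_1-xy_2+z$ becomes $L:=-sxy=x^2y^3/(1+xy)$; and the argument $\sum_{j=0}^k\binom{k}{j}y_js^{k-j}$ marking $k$-edges collapses, since only $y_0=1$ and $y_1=y$ survive, to $v_k:=s^k+k\,y\,s^{k-1}$ for $k\ge1$.

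The heart of the argument is to read $\widetilde C$ evaluated at these arguments as a reweighting of $C(x,y)$. I would group the connected multigraphs by their underlying simple graph: a connected multigraph on $[n]$ is the datum of its underlying simple graph $G$ together with a number $a\ge0$ of loops at each vertex and a multiplicity $b\ge1$ at each edge of $G$, and both the weight and the monomial factor over vertices and edges accordingly. Per vertex the loop choices contribute $\sum_{a\ge0}L^{a}/(2^{a}a!)=e^{L/2}$, the factor $1/(2^{a}a!)$ being precisely the loop weight of a vertex carrying $a$ loops, and per edge the multiplicity choices contribute $\sum_{k\ge1}v_k/k!$, which after splitting into two series equals $(1+y)e^{s}-1$. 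Hence $\widetilde C$ at these arguments equals $C\bigl(xe^{-xy/(1+xy)}e^{L/2},\,(1+y)e^{s}-1\bigr)$, and inserting the values of $L$ and $s$ and combining exponents one recognizes $A(x,y)$ and $B(x,y)$ exactly as in the statement. Adding back $E(x,y)$ gives (\ref{eq:simplekernel}).

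The only genuine point of care, and the step I would write out most carefully, is the legitimacy of these formal-power-series manipulations: one must check that the substitution is admissible for composition into $\widetilde C$ and that the regrouping by underlying simple graph is coefficientwise finite. Both hold because, after the substitution, the vertex argument has $x$-order $1$, the loop argument $L$ has $x$-order $2$, and $v_k$ has $x$-order $k-1$, so for each fixed power of $x$ only finitely many loop-and-multiplicity decorations of a given simple graph contribute. With this in hand the remaining verifications are routine, and the form of $E$ carries over unchanged from Theorem~\ref{th:kernel}.
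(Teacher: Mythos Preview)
Your proposal is correct and follows essentially the same route as the paper: specialize Theorem~\ref{th:kernel} at $z=0$, $y_1=y$, $y_i=0$ for $i\ge2$, and then pass from $\widetilde C$ to $C$ by grouping connected multigraphs according to their underlying simple graph. The only cosmetic difference is that the paper isolates the general identity $\widetilde C(x,z,y_1,y_2,\dots)=C\bigl(xe^{z/2},\sum_{i\ge1}y_i/i!\bigr)$ as a standalone equation and then says ``combine'', whereas you compute the specialized arguments first and derive the $\widetilde C\to C$ reduction inline at those values; your extra remark on formal-power-series admissibility is a welcome addition that the paper omits.
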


\begin{proof}
Since  the weight of a simple graph is one,
the number of simple 3-graphs is equal to the number
of weighted 3-multigraphs without loops or multiple edges. This
observation leads to
\begin{equation}\label{eq:kernels}
K(x,y) = \widetilde K (x,0,y,0,\ldots,0,\ldots).
\end{equation}
Moreover, for each connected multigraph $\mathcal{\widetilde G}$,
a  connected simple graph $\mathcal{G}$ can be obtained
by removing loops and replacing each multiple edge with a single edge.
Then $\mathcal{\widetilde G}$ is obtained from $\mathcal{G}$
by replacing each edge with a multiple edge, and attaching  zero or
more loops at each vertex. This can be encoded as

\begin{equation}\label{eq:multicon}
\widetilde{C}(x,z,y_1,y_2,\ldots,y_k,\ldots) =
C\left( x{e^{z/2}},\sum _{i\geq1}{\frac {y_{{i}}
}{ i !}} \right),
\end{equation}
where the exponential and the $1/i!$ terms take care
of the weights.
Finally, Equation~(\ref{eq:simplekernel}) follows 
by combining (\ref{eq:kernels}),~(\ref{eq:multikernels})
and~(\ref{eq:multicon}).
\end{proof}

We remark that a formula equivalent to (\ref{eq:simplekernel})
was obtained by Jackson and Reilly~\cite{jackson}, using the principle of inclusion and exclusion. Our approach emphasizes the assignment of weights to multigraphs, which are needed in the various combinatorial decompositions.

Note that taking $y=1$ in Equation~(\ref{eq:simplekernel})
we obtain the univariate generating function $K(x)$ of
3-graphs as

\begin{equation}\label{eq:simpleunikernel}
K(x) = K(x,1) = C(A(x,1),B(x,1))+E(x,1).
\end{equation}

The proof of Theorem~\ref{th:kernel} requires the generating
function of 2-multigraphs.
Let $\widetilde H_{n,m,l_1,l_2,\ldots}$ be the sum of the
weights of  2-multigraphs with $n$ vertices, $m$ loops
and $l_i$ $i$-edges ($i\ge1$), and let
$$\widetilde H(x,z,y_1,y_2,\ldots) = \sum
\widetilde H_{n,m,l_1,l_2,\ldots} x^nz^my_1^{l_1}y_2^{l_2}\ldots/n!.
$$

\begin{lemma}
Let $\widetilde H(x,z,y_1,y_2,\ldots)$ and
$\widetilde K(x,z,y_1,y_2,\ldots)$
be as before, and let $s = \frac{xy_1^2}{1-xy_1}$. Then the  following equation holds:
\begin{equation}\label{eq:multikernelcores}
\begin{array}{ll}
\widetilde{K}(x,z,y_1,y_2,\ldots,y_k,\ldots) = \\ 
\widetilde{H}\left(x,-sxy_1-xy_2+z,y_1+s,y_2+2y_1s+s^2,\ldots,
\sum_{j=0}^{k}\binom{k}{j}y_js^{k-j},\ldots\right)
\\ \qquad 
-\ln \sqrt{1+xy_1}-\ds\frac{xz}{2}+\frac{x^2y_2}{4}+
\frac{xy_1}{2}-\frac{(xy_1)^2}{4}.
\end{array}
\end{equation}
\end{lemma}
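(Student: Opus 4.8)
The plan is to realise the core--kernel correspondence for multigraphs as a bijection, translate it into a substitution of generating functions relating $\widetilde K$ to $\widetilde H$, and then invert that substitution; the displayed identity is exactly the inverted relation. First I would record the decomposition. A $2$-multigraph is either \emph{cyclic} --- a single vertex carrying one loop, two vertices joined by a double edge, or a simple cycle $C_k$ with $k\ge 3$ --- in which case its kernel is empty; or it has a non-empty kernel, which is a $3$-multigraph $\widetilde K$ from which the original multigraph is recovered by replacing each of the edges making up each multi-edge with a path of length $\ge 1$ and each loop with a cycle of length $\ge 1$ (a cycle of length $1$ being the loop kept, a cycle of length $2$ a double edge). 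Since the kernel is recovered by contracting maximal paths of degree-two vertices, and this contraction leaves the degree of every branch vertex unchanged, the assignment (kernel, expansion data) $\mapsto$ (non-cyclic $2$-multigraph) is a bijection, and the expanded object is cyclic exactly when $\widetilde K$ is empty. The generating function of cyclic $2$-multigraphs is, weighting the three families appropriately,
$$\Xi=\frac{xz}{2}+\frac{x^2y_2}{4}+\sum_{k\ge 3}\frac{(xy_1)^k}{2k}=\frac{xz}{2}+\frac{x^2y_2}{4}-\ln\sqrt{1-xy_1}-\frac{xy_1}{2}-\frac{(xy_1)^2}{4}.$$

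Next I would encode the expansion as a near-identity substitution $\Phi$ on the variables of $\widetilde K$, so that $\widetilde H=\widetilde K\circ\Phi+\Xi$. Replacing a $1$-edge by a path of length $\ge1$ sends the $1$-edge variable to $y_1+q$, where $q=xy_1^{2}/(1-xy_1)$ is the series of a path of length $\ge 2$; more generally each of the $k$ parallel edges of a $k$-edge becomes, independently, a plain edge (merging into a multi-edge of smaller multiplicity) or such a path, and balancing the weight $1/k!$ of the $k$-edge against the weight $1/j!$ of the surviving $j$-edge and the $1/(k-j)!$ from the unordered family of $k-j$ paths yields $y_k\mapsto\sum_{j=0}^{k}\binom{k}{j}y_j q^{k-j}$ (with $y_0=1$). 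Replacing a loop by a cycle sends $z\mapsto z+xy_2+x^2y_1^{3}/(1-xy_1)$, the three terms being the loop kept, the loop turned into a double edge (one new vertex and a $2$-edge), and the loop turned into a simple cycle of length $\ell\ge 3$ ($\ell-1$ new vertices and $\ell$ edges, the factor $1/2$ from reversing the replacing path cancelling the weight $1/2$ of the loop). Finally $x\mapsto x$, the subdivision vertices being already counted inside $q$ and the cycle series.

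To conclude I would invert $\Phi$. Since $y_1\mapsto y_1/(1-xy_1)$ is a near-identity substitution it has a compositional inverse $y_1\mapsto y_1/(1+xy_1)=y_1+s$ with $s=-xy_1^{2}/(1+xy_1)$, and the key observation is that $q$, evaluated after $\Phi^{-1}$, is the single series $-s$, the same for every multiplicity; hence $\Phi^{-1}$ sends $y_k$ to $\sum_{j=0}^{k}\binom{k}{j}y_j s^{k-j}$ and, after a short manipulation of the rational functions involved, $z$ to $z-sxy_1-xy_2$. Then $\widetilde K=(\widetilde H-\Xi)\circ\Phi^{-1}=\widetilde H\circ\Phi^{-1}-\Xi\circ\Phi^{-1}$; the first term is precisely the substitution of these variables into $\widetilde H$ displayed in the statement, and substituting them into $-\Xi$ and simplifying --- the logarithm collapsing to $-\ln\sqrt{1+xy_1}$ and the rational remainder to $-\tfrac{xz}{2}+\tfrac{x^2y_2}{4}+\tfrac{xy_1}{2}-\tfrac{(xy_1)^2}{4}$ --- produces the correction term, that is, the second line of the identity.

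The main obstacle is the second step: pinning down the weights in the multi-edge and, especially, the short-cycle substitutions, since a loop that becomes a double edge both transfers its weight $1/2$ to the resulting $2$-edge and makes the $z$-substitution genuinely depend on $y_2$, whereas a loop that becomes a longer cycle releases that weight because simple cycles are unweighted. The inversion and the final algebraic collapse are routine once the substitution rules are correct, but I would nonetheless verify those rules, and hence the whole identity, against the first several coefficients of $\widetilde H$ and $\widetilde K$, and against the simple-graph specialisation $z=y_2=y_3=\cdots=0$, $y_1=y$, where both sides reduce to the enumeration of simple $2$- and $3$-graphs consistent with~(\ref{eq:coresedges}).
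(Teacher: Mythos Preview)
Your proposal is correct and follows essentially the same route as the paper: you set up the forward relation $\widetilde H=\widetilde K\circ\Phi+\Xi$ from the kernel decomposition (with the same substitutions for $y_k$ and $z$, since your $x^2y_1^3/(1-xy_1)$ equals the paper's $sxy_1$), and then invert it. The paper is terser about the inversion, simply asserting that a computation yields the claimed identity, whereas you carry it out explicitly and identify the weight bookkeeping for loops as the delicate point; but the argument is the same.
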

\begin{proof}
The kernel of a 2-multigraph is obtained by  replacing
each edge with a path. This  implies the following equation:
\begin{equation}\label{eq:multicoreskernel}
\begin{array}{ll}
\widetilde{H}(x,z,y_1,y_2,\ldots,y_k,\ldots) = \\ \widetilde{K}\left(x,sxy_1+xy_2+z,y_1+s,y_2+2y_1s+s^2,\ldots,
\sum_{j=0}^{k}\binom{k}{j}y_js^{k-j},\ldots\right)\\
\qquad 
-\ln \sqrt{1-xy_1}+\ds\frac{xz}{2}+\frac{x^2y_2}{4}-
\frac{xy_1}{2}-\frac{(xy_1)^2}{4}.
\end{array}
\end{equation}

The first summand corresponds to the case where there is
at least one vertex of degree $\ge3$, and thus the kernel
is not empty. The other summands correspond to cycles (each vertex is of degree exactly two), and  from the logarithm encoding cycles we must
take care of cycles of length one or two.

If the kernel is not empty, we replace every edge and every loop with
a path. The expression $s$ encodes  a nontrivial
path,  consisting of at least one vertex.
Each loop can be replaced with either another loop, or a vertex and a double edge, or a path consisting of at least two vertices; these operations are
encoded, respectively,  by $z,xy_2$ and $s$. Note that if the kernel has an $i$-loop, then we can replace any of the loops with a path, in both
orientations. Therefore there are $2i$ ways to obtain the
same graph, which compensates the fact that the weight
of the new graph will be $2i$ times the weight of the old graph.
Each $k$-edge can be replaced with a $j$-edge and $k-j$ nontrivial paths, where $0\leq j \leq k$. There are $(k-j)!$ ways to obtain the same graph, and the weight becomes ${k!}/{j!}$ times the previous weight. Therefore
$y_k$ is replaced with $\binom{k}{j}y_js^{k-j}$, for $j=0,\dots,k$.

A simple computation shows that   inverting (\ref{eq:multicoreskernel})
gives (\ref{eq:multikernelcores}), as claimed.
\end{proof}

\begin{proof}[Proof of Theorem~\ref{th:kernel}]
Given a multigraph it is clear
that every vertex incident to a loop or to a multiple edge
belongs to the core. Therefore,
Equation~(\ref{eq:coresedges}) can be easily extended
to multigraphs, giving the equation
\begin{equation}\label{eq:multicores}
\widetilde{H}(x,z,y_1,y_2,\ldots,y_k,\ldots) =
\widetilde C \left( x{e^{-xy_1}},z,y_1,y_2,\ldots,y_k,\ldots \right)-x
+\frac{{x}^{2}y_1}{2}.
\end{equation}
Finally, Equation~(\ref{eq:multikernels}) follows by
composing (\ref{eq:multikernelcores}) and
(\ref{eq:multicores}).
\end{proof}

As mentioned before, Theorem \ref{th:cores} and Corollary \ref{cor:kernel}
hold for planar graphs as well. In the next section we use them to enumerate and analyze planar 2- and 3-graphs.

\section{Planar graphs}\label{sec:graphs}

In this section we follow the ideas of Section~\ref{sec:maps} on planar maps in order to obtain related results for planar 2-graphs and 3-graphs.
The asymptotic enumeration of planar graphs was solved in~\cite{gn}, as well as the distribution of the number of edges.
From now on we assume that we know the generating function
$C(x,y)$ of connected planar graphs, where
$x$ marks vertices and $y$ marks edges, as well as
its main properties, such as the dominant singularities and the singular expansions around them (see \cite{gn} for details).

In this section we use the equations obtained in
Section~\ref{sec:equations} to compute several parameters
in planar graphs. Most of the computations will be analogous
to the ones of maps, but technically more involved.
In order to compare the following results, we recall \cite{gn}
that the number of connected planar graphs is
$c_n\sim \kappa n^{-7/2}\gamma^n$, where $\kappa\approx 0.4104\cdot 10^{-5}$
and $\gamma \approx 27.2269$. As expected, there are exponentially
fewer connected 2-graphs and 3-graphs than connected planar graphs.
Besides, the expected degree of 2-graphs and 3-graphs is 
larger.

\subsection{Planar 2-graphs}

We start our analysis with the enumeration of planar 2-graphs. 

\begin{theorem}\label{th:2graphs}
Let $h_n$ be the number of planar 2-graphs.
The following estimate holds:
$$
  h_n  \sim \kappa_2 n^{-7/2} \gamma_2^nn!,
$$
\noindent where $\gamma_2\approx 26.2076$ and $\kappa_2  \approx 0.3724\cdot 10^{-5}.$
\end{theorem}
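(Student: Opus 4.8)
The plan is to apply singularity analysis to the functional equation \eqref{eq:cores}, namely $H(x)=C(xe^{-x})-x+x^2/2$, exactly in the spirit of the map computation in Theorem~\ref{th:maps}, but now starting from the known singular behaviour of $C(x)$ rather than an explicit algebraic formula. Recall from \cite{gn} that $C(x)$ has its dominant singularity at $\rho\approx 0.03673$, that $C(x)$ is analytic in a $\Delta$-domain there, and that it has a singular expansion of the form $C(x)=C_0+C_2 X^2+C_4 X^4+C_5 X^5+O(X^6)$ with $X=\sqrt{1-x/\rho}$, the ``square-root-of-square-root'' type singularity responsible for the $n^{-7/2}$ subexponential factor (the relevant singular term is the $X^5$ one, since $\Gamma(-5/2)=-8\sqrt\pi/15$).

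First I would locate the dominant singularity of $H$. Since $H(x)=C(xe^{-x})-x+x^2/2$ and the polynomial correction is entire, the singularities of $H$ come from the points where $xe^{-x}=\rho$. The map $\phi(x)=xe^{-x}$ is increasing on $[0,1]$ with maximum value $e^{-1}$ at $x=1$; since $\rho<e^{-1}$, there is a unique $\rho_2\in(0,1)$ with $\rho_2 e^{-\rho_2}=\rho$, and $\phi'(\rho_2)=(1-\rho_2)e^{-\rho_2}\neq 0$, so $\phi$ is a local analytic isomorphism near $\rho_2$. This gives $\gamma_2=1/\rho_2$; numerically one checks $\rho_2\approx 1/26.2076$, i.e. $\gamma_2\approx 26.2076$. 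One must also verify that $\rho_2$ is the \emph{unique} dominant singularity on $|x|=\rho_2$ and that $H$ inherits a $\Delta$-domain: this follows because $\phi$ maps a $\Delta$-domain around $\rho_2$ into a $\Delta$-domain around $\rho$ (as $\phi'(\rho_2)\neq 0$) and because $C$ has no other singularity of modulus $\le\rho$, combined with the fact that $|\phi(x)|<\rho$ for $|x|=\rho_2$, $x\neq\rho_2$ (strict, since $\phi$ restricted to the real axis attains the value $\rho$ only at $\rho_2$ in $(0,1)$, and by a standard argument for aperiodic GFs with nonnegative coefficients).

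Next I would transfer the singular expansion. Writing $u=xe^{-x}$ and expanding $u$ around $x=\rho_2$ gives $u=\rho(1-\psi(x))$ for an analytic $\psi$ with a simple zero at $\rho_2$, so $1-u/\rho=\psi(x)$ has a simple zero; hence $\sqrt{1-u/\rho}=\sqrt{\psi(x)}$ behaves like $c\,\sqrt{1-x/\rho_2}$ times a function analytic and nonzero at $\rho_2$, where $c^2=\rho_2\phi'(\rho_2)/\phi(\rho_2)=(1-\rho_2)\rho_2/\rho\cdot e^{-\rho_2}\cdot\rho^{-1}$... more cleanly $c^2 = -\rho_2\,\phi'(\rho_2)/(\rho)\cdot(-1)$, i.e. $c = \sqrt{\rho_2(1-\rho_2)e^{-\rho_2}/\rho} = \sqrt{1-\rho_2}$. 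Substituting into $C(x)=C_0+C_2X^2+C_4X^4+C_5X^5+\cdots$ with $X=\sqrt{1-u/\rho}$, the leading singular (non-analytic) term $C_5 X^5$ becomes $C_5 c^5 (1-x/\rho_2)^{5/2}$ plus lower-order singular terms, so $H(x)$ has a singular expansion $H(x)=H_0+H_2\bar X^2+H_4\bar X^4+H_5\bar X^5+O(\bar X^6)$ with $\bar X=\sqrt{1-x/\rho_2}$ and $H_5=C_5(1-\rho_2)^{5/2}$. The transfer theorem (applied to the $\bar X^5$ term, using $\Gamma(-5/2)=-\tfrac{8}{15}\sqrt\pi$) then yields $h_n\sim \kappa_2 n^{-7/2}\gamma_2^n n!$ with $\kappa_2 = H_5/\Gamma(-5/2)\cdot(\text{sign bookkeeping}) = -\tfrac{15}{8\sqrt\pi}H_5$; plugging in the numerical value $C_5$ from \cite{gn} gives $\kappa_2\approx 0.3724\cdot 10^{-5}$.

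The main obstacle I anticipate is not any single hard estimate but the careful bookkeeping: (a) making sure the composition of singular expansions is carried to high enough order, since the exponent $-7/2$ comes from the \emph{fourth} term in the expansion and the earlier ($X^2$, $X^4$) terms contribute only analytic pieces that must be tracked to confirm they do not interfere; (b) rigorously justifying the $\Delta$-analyticity and uniqueness of the dominant singularity of $H$ through the composition with $\phi$, which requires the aperiodicity/positivity argument mentioned above; and (c) getting the constant $\kappa_2$ exactly right, including all the factors of $(1-\rho_2)^{5/2}$ coming from $\phi'(\rho_2)$ and the Gamma-function normalization. None of these is deep, but the constant-chasing is where errors creep in; the structural part (square-root-type singularity of $C$ pulls back to a square-root-type singularity of $H$ via the entire local isomorphism $\phi$) is immediate.
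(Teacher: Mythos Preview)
Your proposal is correct and follows essentially the same route as the paper: starting from $H(x)=C(xe^{-x})-x+x^2/2$, locating the dominant singularity $\sigma$ (your $\rho_2$) via $\sigma e^{-\sigma}=\rho$, pulling back the singular expansion $C(x)=C_0+C_2X^2+C_4X^4+C_5X^5+O(X^6)$ through the local isomorphism $x\mapsto xe^{-x}$ to obtain $H_5=C_5(1-\sigma)^{5/2}$, and applying the transfer theorem. The paper is terser about $\Delta$-analyticity and uniqueness of the dominant singularity, but your more careful treatment of these points is entirely appropriate.
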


\begin{proof}
Recall Equation~(\ref{eq:cores}) from Section~\ref{sec:equations}:
$$H(x) = C(xe^{-x})-x+\frac{x^2}{2}.$$
In order to obtain an asymptotic estimate for $h_n$ we need
to locate the dominant singularity of $H(x)$.
The  singularity of $C(x)$ is  $\rho = \gamma^{-1} \approx 0.0367$~\cite{gn}.
Hence the singularity of $H(x)$ is at
$\sigma = T(\rho) \approx 0.0382$. Therefore, the exponential growth constant
of $h_n$ is $\gamma_2 = \sigma^{-1} \approx 26.2076$.
Note that we use the same symbol $\sigma$ as in Section~\ref{sec:maps} for maps, but they correspond to different constants. No confusion should arise and it helps emphasizing the parallelism between planar maps and graphs.

The singular expansion of $C(x)$ at the singularity $x=\rho$ is
$$
    C(x) = C_0+C_2X^2+C_4X^4+C_5X^5+O(X^6),
$$
where $X =\sqrt{1-x/\rho}$, and $C_5 \approx -0.3880\cdot 10^{-5}$
is computed in~\cite{gn}.
Plugging this expression into (\ref{eq:cores}) and expanding gives
$$
    H(x) = H_0+H_2X^2+H_4X^4+H_5X^5+O(X^6),
$$
where now $X=\sqrt{1-x/\sigma}$ and
$H_5 = C_5 (1-\sigma)^{5/2}\approx -0.3520\cdot 10^{-5}$.
The estimate for $h_n$ follows directly by the transfer theorem,
provided that $H$ can be extended to a $\Delta$-domain.
As opposed to the case of maps, we do not have an exact
expression for $C$, and because of
the relation of Equation (\ref{eq:cores}), it is not enough
to assume that $C$ can be extended to a
$\Delta$-domain, since $|(-\sigma)\exp(-(-\sigma))| > \rho$.
Instead, we use an alternative expression for $H$. 

Define $A(x)$ as the generating function
of connected planar graphs with an unlabelled root vertex
where all the vertices except, perhaps, the root,
have degree at least 2. If the root has degree 2 then 
graphs in $A$ are encoded by $H'(x)$. Otherwise either the graph is reduced to a single vertex or the root is connected to a rooted 2-graph through a path of arbitrary length and they are encoded by $\frac{x}{1-x}H'(x)$. Hence we have 
\begin{equation}\label{eq:AH}
A(x) = {H'(x)\over 1-x}+1.
\end{equation}

Let $B(x)$ be the generating of planar 2-connected graphs. The unique decomposition of a rooted connected graph into blocks is reflected  (see \cite{gn}) into the basic equation
$C'(x) = \exp\left( B'(xC'(x))\right)$. 
The radius of convergence $R$ of $B$ is given by $R=\rho C'(\rho)$, and $R$ is the only singularity in the circle of convergence of $B(x)$. 

A straightforward modification including paths as building blocks in the decomposition gives
\begin{equation}\label{eq:AB}
A(x) = \exp(B'(xA(x))-x).
\end{equation}
Let $F(x) = \exp(B'(xA(x))-x)$ be the right-hand side of (\ref{eq:AB}). Equation (\ref{eq:AH}) shows 
that $A$ has the same singularities
as $H$ in the open ball of radius 1. We now use (\ref{eq:AB})
to prove that  $A$, and therefore $H$, can be extended to a $\Delta$-domain.

The proof has two parts. First we have to prove
that $A$ behaves like a square root near its singularity
$x=\sigma$. This follows from~\cite[Theorem 2.31]{drmotatrees}, using $r(x)=R/x$ (in the notation of \cite{drmotatrees}).
Then we need to prove that there is no branch
point when solving  $A=F(A,x)$ for $x$ in the circle of convergence $|x|=\sigma$. Since $F_A(A,x)=xAB''(xA)$ is
a positive function, and $F_A(A(\sigma),\sigma)=
RB''(R)<1$, we have that $|F_A(A(x),x)|<1$,
so it is analytic in a neighbourhood of $x$.
By compactness   $A$ is analytic in a  $\Delta$-domain at $\sigma$.
\end{proof}

Our next result is a limit law for the number of edges in a random
planar 2-graph. We recall \cite{gn} that the expected number of edges in random connected planar graphs is asymptotically $\mu n$, where $\mu \approx 2.2133$,
and the variance is $\lambda n$ with $\lambda\approx 0.4303.$

\begin{theorem}
The number $X_n$ of edges in a random planar 2-graph with $n$ vertices
is asymptotically Gaussian with
$$
\ex X_n \sim \mu_2n
\approx 2.2614n,
$$
$$
\var X_n \sim \lambda_2n
\approx 0.3843n.
$$
\end{theorem}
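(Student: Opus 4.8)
The plan is to introduce the edge variable into Equation~(\ref{eq:coresedges}), namely
$$
H(x,y) = C(xe^{-xy},y) - x + \frac{x^2 y}{2},
$$
and then apply the quasi-powers theorem with $u=y$ marking edges. First I would recall from~\cite{gn} that $C(x,y)$ has, for $y$ in a neighbourhood of $1$, a dominant singularity $\rho(y)$ in $x$ of the square-root type, with a singular expansion $C(x,y) = C_0(y) + C_2(y) X^2 + C_4(y) X^4 + C_5(y) X^5 + O(X^6)$, where $X = \sqrt{1 - x/\rho(y)}$ and $\rho(y)$ is analytic at $y=1$ with $\rho(1) = \rho \approx 0.0367$. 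Substituting $z = xe^{-xy}$, the singularity $\sigma(y)$ of $y\mapsto H(x,y)$ in $x$ is determined by the equation $\sigma(y) e^{-\sigma(y) y} = \rho(y)$; equivalently, writing $\mathcal T$ for the tree-type inverse, $\sigma(y)$ is the value of $x$ with $T(xy)/y$-image equal to $\rho(y)$, so $\sigma(y) = \mathcal T(\rho(y) y)/y$ where $\mathcal T(w) = -W_0(-w)$ is the principal tree function. Since $x\mapsto xe^{-xy}$ is analytic and locally invertible near $x=\sigma(1)=\sigma\approx 0.0382$ (the derivative $(1-xy)e^{-xy}$ is nonzero there because $\sigma < 1$), the substitution preserves the square-root nature of the singularity, and $H(x,y)$ inherits an expansion $H_0(y) + H_2(y) X^2 + H_4(y) X^4 + H_5(y) X^5 + O(X^6)$ with $X = \sqrt{1 - x/\sigma(y)}$ and $H_5(y) = C_5(y)(1 - \sigma(y) y)^{5/2}$, mirroring the univariate computation in the proof of Theorem~\ref{th:2graphs}.

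Next I would verify that the hypotheses of the quasi-powers theorem are met: $H(x,y)$ admits a representation $A(x,y) + B(x,y)(1 - x/\sigma(y))^{5/2}$ in a bivariate $\Delta$-domain, with $5/2 \notin \{0,-1,-2,\dots\}$, and $\sigma(y)$ is analytic and non-vanishing at $y=1$. The theorem then gives that $X_n$ is asymptotically Gaussian with
$$
\ex X_n \sim \left(-\frac{\sigma'(1)}{\sigma(1)}\right) n =: \mu_2 n, \qquad
\var X_n \sim \left(-\frac{\sigma''(1)}{\sigma(1)} - \frac{\sigma'(1)}{\sigma(1)} + \left(\frac{\sigma'(1)}{\sigma(1)}\right)^2\right) n =: \lambda_2 n.
$$
It then remains to compute $\mu_2$ and $\lambda_2$ numerically. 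Differentiating the defining relation $\sigma(y) e^{-\sigma(y) y} = \rho(y)$ implicitly at $y=1$, using the known values $\rho(1)=\rho$, $\rho'(1)$, $\rho''(1)$ from~\cite{gn} (these encode $\mu$ and $\lambda$ for connected planar graphs), yields $\sigma'(1)$ and $\sigma''(1)$ in closed form in terms of $\sigma$, $\rho'(1)$, $\rho''(1)$; plugging in the numerical values gives $\mu_2 \approx 2.2614$ and $\lambda_2 \approx 0.3843$.

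The main obstacle is not conceptual but bookkeeping: one must confirm that the singular expansion of $C(x,y)$ from~\cite{gn} is uniform in $y$ near $1$ and that the change of variables $z = xe^{-xy}$ genuinely transports the $\Delta$-domain structure bivariately (so that the local exponent $5/2$ is preserved and no spurious singularity of $x\mapsto xe^{-xy}$ intrudes before $x = \sigma(y)$) — this is the same analytic-continuation argument used implicitly throughout Section~\ref{sec:maps}, so I would invoke it by analogy. After that, the whole proof reduces to the implicit differentiation of $\sigma(y) e^{-\sigma(y)y} = \rho(y)$ and substitution of the constants, exactly parallel to the map computations $\xi(u) = \sigma u/(\sigma+u)^2$ carried out earlier, only with $\rho(y)$ itself now varying rather than being a fixed number.
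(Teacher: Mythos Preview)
Your proposal is correct and follows essentially the same approach as the paper: both use Equation~(\ref{eq:coresedges}) to identify the singularity curve via $\sigma(y)e^{-\sigma(y)y}=\rho(y)$ and then apply the quasi-powers theorem, differentiating implicitly to obtain $\mu_2=(\mu-\sigma)/(1-\sigma)$ and the analogous expression for $\lambda_2$. You are in fact somewhat more careful than the paper in spelling out why the substitution $z=xe^{-xy}$ preserves the singular type and the $\Delta$-domain structure, which the paper leaves implicit.
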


\begin{proof}
Equation~(\ref{eq:coresedges}) from Section~\ref{sec:equations}
$$H(x,y) = C(xe^{-xy},y)-x+\frac{x^2y}{2}$$
implies that the singularity $\sigma(y)$ of the univariate
function $x\mapsto H(x,y)$ is given by
$$\sigma(y)e^{-\sigma(y)y} = \rho(y),$$

\noindent where $\rho(y)$ is the singularity of the univariate
function $x\mapsto C(x,y)$.
An easy calculation gives
$$\mu_2=-\ds\frac{\sigma'(1)}{\sigma(1)} =
{-\rho'(1)/\rho - \sigma \over 1-\sigma} = {\mu - \sigma \over 1-\sigma} \approx 2.2614,
$$
which provides the constant for the expectation.
Similarly
$$
\renewcommand{\arraystretch}{3}
\begin{array}{ll}
& \lambda_2=-\ds{\sigma''(1) \over \sigma(1)} -{\sigma'(1)\over\sigma(1)} +
\left({\sigma'(1)\over\sigma(1)}\right)^2 = \\
&  \ds{\ds{-\rho''(1)\over \rho(1)}-3\sigma'(1)
-{3\sigma'(1)^2\over \sigma}+\sigma'(1)^2+2\sigma'(1)\sigma
+\sigma^2-{\sigma'(1)\over\sigma}+\left({\sigma'(1)\over \sigma}\right)^2 \over 1-\sigma}.
\end{array}
$$
This value can be computed from the known values of $\mu,\lambda$ and $\sigma$.

Again, in order to apply the quasi-powers theorem we  need
to prove that $H(x,y)$ is   $\Delta$-analytic
for $y$ close enough to 1. Define $A(x,y)$
as the generating function of connected planar graphs
with an unlabelled root where all the vertices except the root have degree at least 2.
The following equations are a direct extension of (\ref{eq:AH}) and (\ref{eq:AB}): 
$$
A(x,y)={H_x(x,y)\over 1-xy}+1,
$$
$$
A(x,y)=\exp(B_x(xA(x,y),y)-xy)=F(A,x,y).
$$
From the first equation we know that $A$ and $H$
have the same singularities for $x$, $y$ such
that $xy<1$, so we just need to prove that
for values $y_0$ near $1$ the function  $A(x,y_0)$
is  $\Delta$-analytic.
The proof is analogous to that of 
Theorem~\ref{th:2graphs}. First,  $A(x,y)$ behaves like a square root near the singularity $\sigma(y_0)$, again by~\cite[Theorem 2.31]{drmotatrees}
taking $r(x,u)=R(u)/x$.
Then we need that, when $|x|=R(y)$,  
$F_A(A(x,y),x,y)\neq 1$ holds. Since  
$F_A$ is positive,
$F_A(A(x,1),x,1)<1$, and since both $F$ and $A$
are continuous in $y$, for values of $y$
close enough to 1 the inequality  holds,
so again we can extend $A(x,y)$
to a $\Delta$-domain at $\sigma(y)$. 
\end{proof}

Next we determine a limit law for the size of the core  in random connected planar graphs.

\begin{theorem}\label{th:sizecore}
The size $X_n$ of the core of a random connected planar graph
with $n$ edges is asymptotically Gaussian with
$$
 \ex X_n \sim (1-\sigma)n \approx 0.9618n,
\qquad \var X_n \sim \sigma n \approx 0.0382n.
$$
\end{theorem}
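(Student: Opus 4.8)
The plan is to mirror the computation for the core of a random map, replacing the map core decomposition by its graph analogue, Equation~(\ref{eq:corestoconnected}). Blowing up each core vertex into a rooted tree replaces a vertex of the underlying $2$-graph by a factor $T(z)$, so marking the size of the core with a new variable $u$ amounts to the substitution $x\mapsto uT(z)$ in $H$; since the only graph with empty core is a tree, I would write
$$
    C(z,u) = H\bigl(uT(z)\bigr) + U(z),
$$
with $U(z)$ contributing only to the coefficient of $u^0$. Then $[u^k]\,[z^n]C(z,u)$, divided by $[z^n]C(z)$, is precisely the probability that the core of a random connected planar graph on $n$ vertices has $k$ vertices.

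Second, I would locate the dominant singularity of $z\mapsto C(z,u)$ for $u$ near $1$. There are two candidates: the branch point $z=1/e$ of $T(z)$, and the value of $z$ at which $uT(z)$ reaches the singularity $\sigma=T(\rho)$ of $H$ (cf.\ Theorem~\ref{th:2graphs}). For $u$ close to $1$ one has $\sigma/u<1=T(1/e)$, so $uT(z)=\sigma$ has a solution with $z<1/e$; using $T(z)=w\iff z=we^{-w}$ this yields the dominant singularity
$$
    \rho(u)=\frac{\sigma}{u}\,e^{-\sigma/u},
$$
and indeed $\rho(1)=\sigma e^{-\sigma}=\rho$. Since $C$ has the singular expansion $C(x)=C_0+C_2X^2+C_4X^4+C_5X^5+O(X^6)$ with $X=\sqrt{1-x/\rho}$, and $x=uT(z)$ is analytic and non-singular at $z=\rho(u)$, the composition $H(uT(z))+U(z)$ admits an expansion of the form $A(z,u)+B(z,u)\bigl(1-z/\rho(u)\bigr)^{5/2}$ in a bivariate $\Delta$-domain, which is exactly the shape required by the quasi-powers theorem (with exponent $-5/2\notin\{0,-1,\dots\}$).

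Third, I would apply the quasi-powers theorem and extract the constants from $\rho(u)$. Working with $\ln\rho(u)=\ln\sigma-\ln u-\sigma/u$ makes this immediate: its first derivative at $u=1$ equals $\sigma-1$ and its second derivative at $u=1$ equals $1-2\sigma$, so $-\rho'(1)/\rho(1)=1-\sigma$ and, plugging into the variance formula, the cross terms cancel and leave $\var X_n\sim\sigma n$. Since $\sigma>0$ the limit law is non-degenerate, and we recover the value $\lambda_2=1-\sigma\approx 0.9618$ announced in the introduction.

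The step I expect to be the main obstacle is the second one: verifying that $\rho(u)$ is genuinely the unique dominant singularity of $z\mapsto C(z,u)$ uniformly for $u$ in a neighbourhood of $1$, and that the $5/2$-type singular behaviour of $C$ is transported unharmed through the analytic substitution $x=uT(z)$ — i.e.\ that this is a smooth (``subcritical'') composition rather than a critical one. Granting this, the remaining computations are routine.
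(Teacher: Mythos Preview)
Your approach is essentially identical to the paper's: the same bivariate decomposition $\widehat C(z,u)=H(uT(z))+U(z)$, the same explicit singularity curve $\rho(u)=(\sigma/u)e^{-\sigma/u}$, and the same application of the quasi-powers theorem to read off $1-\sigma$ and $\sigma$. One small slip: in your second step the singular expansion you need is that of $H$ at $\sigma$ (given in Theorem~\ref{th:2graphs}), not that of $C$ at $\rho$, since it is $H$ that is being composed with $uT(z)$; the $5/2$-type behaviour is the same, so the argument goes through unchanged.
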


\begin{proof}
The generating function $\widehat C(x,u)$ of connected planar graphs, where $u$ marks the size of the core, is given by
\begin{equation}
\widehat C(x,u) = H(uT(x)) + U(x).
\end{equation}
It follows that the singularity $\xi(u)$ of the univariate
function $x\mapsto \widehat C(x,u)$ is given by the equation
$$uT(\xi(u)) = \sigma.$$
We can isolate $\xi(u)$ obtaining the explicit formula
$$\xi(u) = {\sigma e^{-\sigma / u} \over u}.$$
An easy calculation gives
$$-\frac{\xi'(1)}{\xi(1)} = 1-\sigma,\qquad
-{\xi''(1)\over \xi(1)} -{\xi'(1)\over \xi(1)}
+\left({\xi'(1)\over \xi(1)}\right)^2= \sigma.$$
In order to apply the quasi-powers theorem
we need to show that, for $u_0$ close enough
to 1 we can extend the generating function
$C(x,u_0)$ to a $\Delta$-domain.
As in the proof of Theorem~\ref{th:2graphs}, two steps are needed. First, we have to prove
that $C(x,u)$ is analytic near $x=\rho(u)$
if $\arg(x/\rho(u)-1)>\alpha$.
We know that this is the case for $H(x)$ near
$\sigma$, for some angle $\beta$.
Since $uT(x)$ is analytic, it is conformal and 
preserves angles locally, hence
for $u$ close enough to 1  and $x$ close enough
to $\rho(u)$, if $\arg(x/\xi(u)-1)>\alpha$ for 
some $\alpha > \beta$, then 
$uT(x)$ is close to $\sigma$
and $\arg(T(x)u/\sigma-1)>\beta$. Then $T(x)u$
is in the region of convergence of $H$ and
$C(x,u)$ is analytic in $x$.
On the other hand, if $u=1$ then $uT(x)$
is a positive function, hence if $|x|=\xi(1)$
but $x\neq \xi(1)$ then $|T(x)|<\sigma$.
This implies that if $u$ is close enough to 1
and $|x|=|\xi(u)|$ but far enough from 
$\xi(u)$,
then $|uT(x)u|<\sigma$  by the continuity
of $uT(x)u$, so $C(x,u)$ is analytic in
a neighbourhood of $x$. By compactness,
a finite number of neighbourhoods is enough,
and their union gives a $\Delta$-domain in which
$C(x,u)$ is analytic.
\end{proof}

Our next goal is to analyze the size of the trees attached
to the core of a random connected planar graph.

\begin{theorem}\label{th:numtrees}
Let $k$ be fixed and let $X_{n,k}$ count trees with $k$ vertices attached to the core
of a random connected planar graph with $n$ vertices. Then
$X_{n,k}$ is asymptotically normal and
$$
\ex X_{n,k} \sim \alpha_k n, \qquad
\var X_n \sim \beta_k n,
$$
where
$$
\alpha_k = \frac{1-\sigma}{\sigma}\frac{k^{k-1}}{k!}\rho^k,
$$
and $\beta_k$ is described in the proof.
\end{theorem}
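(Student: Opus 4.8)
The plan is to mark, in the core decomposition of a connected planar graph, both the overall size and the number of trees of a fixed size $k$ attached to the core, and then apply the quasi-powers theorem. Concretely, I would introduce a variable $w$ tracking the number of attached trees with exactly $k$ vertices, using the modified tree generating function
$$
T_k(x,w) = T(x) + (w-1)\,\frac{k^{k-1}}{k!}\,x^k,
$$
since $k^{k-1}/k!$ is the $k$-th coefficient of $T(x)$. The core decomposition (\ref{eq:corestoconnected}) then gives
$$
\widehat C(x,w) = H\big(T_k(x,w)\big) + U(x),
$$
and the dominant singularity $\rho_k(w)$ of $x \mapsto \widehat C(x,w)$ is determined implicitly by the equation $T_k(\rho_k(w),w) = \sigma$, where $\sigma = T(\rho)$ is the singularity of $H$ as in Theorem~\ref{th:2graphs}. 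Writing this out, $\rho_k(w)$ solves $T(\rho_k(w)) + (w-1)(k^{k-1}/k!)\rho_k(w)^k = \sigma$.

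Next I would differentiate this defining equation with respect to $w$ at $w=1$, using $T(\rho)=\sigma$ and $T'(x) = T(x)/(x(1-T(x)))$, hence $T'(\rho) = \sigma/(\rho(1-\sigma))$. Implicit differentiation yields
$$
T'(\rho)\,\rho_k'(1) + \frac{k^{k-1}}{k!}\rho^k = 0,
$$
so $-\rho_k'(1)/\rho = (1-\sigma)/\sigma \cdot (k^{k-1}/k!)\,\rho^{k}$, which is exactly $\alpha_k$ as claimed. The variance constant $\beta_k$ comes from the second derivative: differentiating the defining equation twice at $w=1$ produces a linear equation for $\rho_k''(1)$ in terms of $\rho_k'(1)$, $T'(\rho)$, $T''(\rho)$ and the quantities $(k^{k-1}/k!)\rho^k$, and then $\beta_k = -\rho_k''(1)/\rho - \rho_k'(1)/\rho + (\rho_k'(1)/\rho)^2$ by the quasi-powers formula. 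This is a routine but lengthy computation, which is why $\beta_k$ is only described in the proof rather than displayed in the statement.

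To justify asymptotic normality I would verify that $\widehat C(x,w)$ falls under the quasi-powers framework: the singular expansion of $C$ at $\rho$ has the square-root-type form $C_0 + C_2 X^2 + C_4 X^4 + C_5 X^5 + \cdots$ with $X = \sqrt{1-x/\rho}$ (from \cite{gn}), and composing with the analytic substitution $x \mapsto T_k(x,w)$ preserves this shape, with the exponent $5/2$ unchanged and the branch point moving analytically to $\rho_k(w)$. One must also check that $\rho_k(w)$ remains the unique dominant singularity for $w$ near $1$ and that the perturbation is analytic there; this follows because $T_k(x,w)$ is a small analytic perturbation of $T(x)$ and $\sigma < T(\rho_C) $ lies strictly inside the disk of convergence of $H$. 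The main obstacle, as usual in this circle of results, is not the generating-function manipulation but confirming the analytic-perturbation hypotheses of the quasi-powers theorem uniformly enough to conclude a genuine Gaussian limit law rather than merely the first two moments; once that is in place, the expressions for $\alpha_k$ and $\beta_k$ drop out of the implicit-differentiation calculation described above. Finally, I would note that summing $\alpha_k$ over $k \ge 1$ recovers $\sum_k (k^{k-1}/k!)\rho^k = T(\rho) = \sigma$ times $(1-\sigma)/\sigma$, giving total $1-\sigma$, consistent with the expected core size $(1-\sigma)n$.
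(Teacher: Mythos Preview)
Your proposal is correct and follows essentially the same route as the paper: mark trees of size $k$ via $T_k(x,w)=T(x)+(w-1)T_kx^k$, compose into $H(T_k(x,w))+U(x)$, locate the perturbed singularity via $T_k(\rho_k(w),w)=\sigma$, and read off $\alpha_k,\beta_k$ from the quasi-powers theorem. One small slip: you write that ``$\sigma$ lies strictly inside the disk of convergence of $H$'', but $\sigma$ is exactly the dominant singularity of $H$; the correct observation is that $\rho<1/e$ so $T$ is analytic at $\rho$, making the composition subcritical with the singularity induced by $H$.
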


\begin{proof}
The generating function of trees where variable $w$ marks
trees with $k$ vertices is equal to
$$
T(x,w) = T(x) + (w-1)T_kx^k,
$$
where $T_k = k^{k-1}/k!$ is the $k$-th coefficient of $T(x)$.
The composition scheme for the core decomposition is then
$$
C(x,w) = H(T(x,w)) +U(x).
$$
It follows that the singularity $\rho_k(w)$ of the
univariate function $x\mapsto C(x,w)$ is given
by the equation
$$
T(\rho_k(w)) + (w-1)T_k(\rho_k(w))^k = \sigma.
$$
An easy calculation gives
$$
\alpha_k = -\frac{\rho'_k(1)}{\rho_k(1)} =
\frac{1-\sigma}{\sigma}\frac{k^{k-1}}{k!}\rho^k
$$
$$
\beta_k = -{\rho_k''(1)\over \rho_k(1)}-{\rho_k'(1)\over \rho_k(1)}
+\left({\rho'(1)\over\rho(1)}\right)^2=
{1\over \sigma^2}\left(T_k\rho^k
(T_k\rho^k(1-2k+4\sigma-2k\sigma^2)+\sigma-\sigma^2\right)
$$
The proof that $C(x,w)$ can be extended analytically to 
a $\Delta$-domain is analogous to the proof of Theorem~\ref{th:sizecore}.
\end{proof}

As expected, $\sum_{k\geq 0}\alpha_k = 1-\sigma$,
since there are $\sigma n$ vertices not in the core, and therefore
there are $(1-\sigma)n$ trees attached to the core.
Moreover, $\sum_{k\geq 0}k\alpha_k = 1$, since a connected graph
is the union of the trees attached to its core.

To conclude this section, we consider the parameter $L_n$ equal to the size of largest tree attached to the core of a random planar connected graph.

\begin{theorem}\label{th:finalmaxtreesgraphs}
Let $L_n$ be the size of largest tree attached to the core of a random planar connected graph. Then
$$
{L_n \over \log n} \rightarrow {1\over \log(1/(e\rho))}\approx 0.4340
\qquad \text{in probability},
$$
and
$$
\mathbb{E}L_n \sim {1\over \log(1/(e\rho))} \log n
\qquad (n\rightarrow \infty).
$$
\end{theorem}


\begin{proof}
The main idea in the proof is to generalize  Theorem \ref{th:master}, assigning a numerical ``label'' $\nu$ to each vertex instead of its vertex degree. Given the same hypothesis  in the behaviour of this parameter, 
the conclusion still holds and we obtain an estimate on the maximum label. 

In our case the label is the size of the tree attached to the core
that contains the  given vertex. If the graph is itself a tree
then all  labels are equal to 0 by convention.
Therefore, in the rewording of  Theorem \ref{th:master}, 
$d_{n,k}$ denotes the probability that
a randomly selected vertex of a random planar
graph of size $n$ has label $k$, and 
$d_{n,k,l}$ denotes the probability that
two different (ordered) randomly selected 
vertices have labels $k$ and $l$. In order to compute
such probabilities we define the generating functions
$\widehat{C}(x,z)$ and $\widehat{C}(x,z,w)$ as follows:
$\widehat{C}(x,z)$ is for connected
planar graphs with a root vertex, where $x$ marks vertices
and $z$ marks the label of the root. Analogously,
$\widehat{C}(x,z,w)$ is for connected
planar graphs with two different ordered root vertices,
where $x$ marks vertices, $z$ marks the label of the first root,
and $w$  the label of the second root. 

Given a generating function $F(x)$ of labelled graphs, we let $F^\bullet(x)=xF'(x)$, which encodes graphs rooted at a vertex. 
Also, $F^{\bullet\bullet}(x)$ encodes graphs rooted at two different vertices. 
The next equation is derived from  (\ref{eq:corestoconnected})  by differentiation
$$C^\bullet(x) = H'(T(x) T^\bullet (x) + T(x),
$$ 
and the following relations extend the previous equation, marking the labels of the root vertices: 
$$
\begin{array}{ll}
\widehat{C}(x,z) &= H'(T(x))T^\bullet(zx)+T(x), \\
\widehat{C}(x,z,w) &=
H''(T(x))T^\bullet(zx)T^\bullet(wx)+
H'(T(x))T^{\bullet\bullet}(zwx)+T^\bullet(x). \\
\end{array}
$$
We then have
$$
d_{n,k} = 
{[x^nz^k]\widehat{C}(x,z)\over
[x^n]\widehat{C}(x,1)}, \qquad 
d_{n,k,l} = 
{[x^nz^kw^l]\widehat{C}(x,z,w)\over
[x^n]\widehat{C}(x,1,1)}.
$$
Also note that $\widehat{C}(x,1) = C^\bullet(x)$,
and $\widehat{C}(x,1,1)=C^{\bullet\bullet}(x)$,
which are well-known functions.
Next we verify that all the conditions in Theorem \ref{th:master} hold. 

\smallskip
\noindent
\textit{Condition 1.}
Define
$\alpha_k$ as in Theorem~\ref{th:numtrees}. Then
$$
\overline d_k = k\cdot \alpha_k = {1-\sigma\over \sigma}
{k^k\over k!}\rho^k.
$$
And one easily checks that $\log \overline d_k \sim
k \log (e\rho)$ as $k\to\infty$, as required.

\smallskip
\noindent
\textit{Condition 2.}
To check this condition we cannot use the quasi-powers theorem,
since it only proves the desired result for fixed~$k$. Since we
only need the result for $k$ tending to infinity, we can
dismiss the graphs whose core is empty. Therefore, for $k \to \infty$, 
$$
[x^nz^k]\widehat{C}(x,z) \sim 
[x^n]H'(T(x))[z^k]T^\bullet(xz)=
[x^{n-k}]H'(T(x))\cdot [z^k]T^\bullet(z).
$$
From this we obtain
$$
d_{n,k}\sim{[x^{n-k}]H'(T(x))\over
[x^n]C^\bullet(x)}\cdot [z^k]T^\bullet(z)
\sim {1-\sigma\over \sigma} \left({n-k\over n}\right)^{-5/2}\rho^k\cdot {1\over \sqrt{2\pi k}} e^k.
$$
Finally, when $k\le C \log n$ we have 
$\left({n-k\over n}\right)^{-5/2}\rightarrow 1$ and thus $d_{n,k}\sim \overline{d}_k$.

Now we have to prove a similar estimate for $d_{n,k,l}$. Let $\widehat{C}(x,z,w)= S_1 + S_2 + S_3$, where 
$$
S_1=H''(T(x))T^\bullet(zx)T^\bullet(wx), \quad S_2=
H'(T(x))T^{\bullet\bullet}(zwx), \quad S_3=T^\bullet(x).
$$
We know that the coefficients of $S_3$
are 0 when $k$ and $l$ tend to infinity.
Since we differentiate $H$ once instead of twice, it follows that  $[x^nz^kw^k]S_2
=O((k/n) [x^nz^kw^k]S_1)$. Since $k=O(\log n)$, the coefficients of $S_2$ are asymptotically smaller than those of  $S_1$. Therefore, the main asymptotic part comes from $S_1$.
We have
$$
[x^nz^kw^l]S_1(x,z,w)
=[x^{n-k-l}]H''(T(x))\cdot [z^k]T^\bullet(z)\cdot [w^l]T^\bullet(w).
$$
Then
$$
d_{n,k,l}={[x^{n-k-l}]H''(T(x))\over
[x^n]C^{\bullet\bullet}(x)}\cdot [z^k]T^\bullet(z)\cdot [w^l]T^\bullet(w)
\cdot (1+o(1))
$$
$$\sim \left({1-\sigma\over \sigma}\right)^2
\left({n-k-l\over n}\right)^{-3/2}\rho^{k+l}\cdot \left({1\over \sqrt{2\pi k}}\right)^2 e^{k+l}.
$$
When $k,\ell= O(\log n)$ we have 
$\left({n-k-l\over n}\right)^{-3/2}\rightarrow 1$, and thus
$d_{n,k,l}\sim \overline{d}_k\overline{d}_l$, as required. 

\smallskip\noindent
\textit{Condition 3.} 
We already proved that,
for $k$, $l \geq 1$, and uniformly
for any $k$, $l$, $n$,
we have 
$$[x^nz^k]\widehat{C}(x,z) = f(n, k) (e\rho)^k, \quad 
[x^nz^kw^l]\widehat{C}(x,z,w) = g(n, k, l) (e\rho)^{k+l},
$$
where $f$ and $g$ are subexponential functions, so for any
$\overline q > e\rho$ we have that
$d_{n,k}=O(\overline q^k)$
and $d_{n,k,l}=O(\overline q^{k+l})$.

Thus  Theorem \ref{th:master} applies and we conclude the proof. 
\end{proof}


\paragraph{Remark.}  
A similar result can be proved for random maps. 
Let $L_n$ be the size of largest tree attached to the core of a random rooted map with $n$ edges. Then it can be shown that 
$$
{L_n \over \log n} \rightarrow {1\over \log(3)}\approx 0.912 
\qquad \text{in probability},
$$
and
$$
\mathbb{E}L_n \sim {1\over \log(3)} \log n
\qquad (n\rightarrow \infty).
$$
The proof is similar to the prove of the previous result and we omit it for the sake of brevity.

\subsection{Planar 3-graphs}

We recall again that the generating function of connected planar graphs $C(x,y)$, where $x$ marks vertices and $y$ marks edges, was computed in~\cite{gn}.

\begin{theorem}\label{th:3graphs}
Let $k_n$ be the number of planar 3-graphs.
The following estimate holds:
$$
  k_n  \sim \kappa_3 n^{-7/2} \gamma_3^{n}n!,
$$
where
$$
\gamma_3 \approx 21.3102, \qquad \kappa_3 \approx 0.3107\cdot 10^{-5}.
$$
\end{theorem}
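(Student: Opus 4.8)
The plan is to apply Corollary~\ref{cor:kernel} in its univariate form~\eqref{eq:simpleunikernel},
\[
K(x) = C\bigl(A(x),B(x)\bigr) + E(x),
\]
where $A(x) = A(x,1) = x\,e^{(x^{2}-2x)/(2+2x)}$, $B(x) = B(x,1) = 2\,e^{-x/(1+x)}-1$ and $E(x)=E(x,1)$, and then to run a singularity analysis parallel to that of Theorem~\ref{th:2graphs}, the new ingredient being that we use the \emph{bivariate} singular behaviour of $C(x,y)$ from~\cite{gn} rather than just that of $C(x)$. First I would observe that $A$, $B$ and $E$ are analytic on the disc $|x|<1$ (their only finite singularity is at $x=-1$), so the dominant singularity of $K(x)$ must be produced by $C$. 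Recall from~\cite{gn} that for $y$ in a neighbourhood of $1$ the map $x\mapsto C(x,y)$ has a unique dominant singularity at $x=\rho(y)$, with $\rho(y)$ analytic and decreasing, and admits there a singular expansion $C(x,y)=C_{0}(y)+C_{2}(y)X^{2}+C_{4}(y)X^{4}+C_{5}(y)X^{5}+O(X^{6})$ with $X=\sqrt{1-x/\rho(y)}$, uniformly in $y$. Consequently the dominant singularity $\tau$ of $K(x)$ is the smallest positive solution of
\[
A(\tau)=\rho\bigl(B(\tau)\bigr),
\]
and $\gamma_{3}=1/\tau$; solving this equation numerically with the value of $\rho(\cdot)$ from~\cite{gn} gives $\tau\approx 1/21.3102$, and one checks that $B(\tau)$ lies in the range where the expansion of $C(x,y)$ above is valid.

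The second step is to identify the \emph{type} of the singularity at $\tau$. Put $G(x)=1-A(x)/\rho\bigl(B(x)\bigr)$, analytic near $\tau$ with $G(\tau)=0$; the key point is transversality, $G'(\tau)\neq 0$, equivalently that $x\mapsto A(x)/\rho(B(x))$ reaches the value $1$ with nonzero derivative, so that $G(x)=g(x)(1-x/\tau)$ with $g$ analytic and $g(\tau)>0$. Granting also that $K(x)$ is analytic in a $\Delta$-domain at $\tau$ (which follows from the corresponding property of $C(x,y)$), we substitute the expansion of $C(x,y)$ into~\eqref{eq:simpleunikernel}; since $B(x)$, the $C_{j}(B(x))$ and $g(x)^{5/2}$ are all analytic at $\tau$, this produces
\[
K(x)=K_{0}+K_{2}\widetilde{X}^{2}+K_{4}\widetilde{X}^{4}+K_{5}\widetilde{X}^{5}+O(\widetilde{X}^{6}),
\qquad \widetilde{X}=\sqrt{1-x/\tau},
\]
with $K_{5}=C_{5}\bigl(B(\tau)\bigr)\,g(\tau)^{5/2}$. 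By the transfer theorem, together with $\Gamma(-5/2)=-\tfrac{8\sqrt{\pi}}{15}$, we obtain $k_{n}\sim \kappa_{3}\,n^{-7/2}\gamma_{3}^{\,n}n!$ with $\gamma_{3}=1/\tau$ and $\kappa_{3}=K_{5}/\Gamma(-5/2)$; the sign of $C_{5}$ established in~\cite{gn} makes $\kappa_{3}>0$, and numerical evaluation of $\tau$, $g(\tau)$ and $C_{5}(B(\tau))$ yields $\gamma_{3}\approx 21.3102$ and $\kappa_{3}\approx 0.3107\cdot 10^{-5}$.

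The main obstacle is precisely this second step: in contrast to the map computations of Section~\ref{sec:maps}, $C(x,y)$ is not available in closed form but only as the solution of a system of functional-differential equations, so one must invoke the detailed analytic facts about $C(x,y)$ from~\cite{gn}---analyticity and monotonicity of the critical curve $\rho(y)$, uniformity of the singular expansion in $y$, and $\Delta$-analyticity---to be sure that the continuation of $K(x)$ along the positive axis is blocked exactly at the point $\tau$ where $(A(x),B(x))$ meets the critical curve of $C$, and that this meeting is transversal. With those facts in hand the composition argument is routine, and the determination of $\gamma_{3}$ and $\kappa_{3}$ reduces to a finite numerical computation from the constants tabulated in~\cite{gn}.
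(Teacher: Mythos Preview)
Your proposal is correct and follows essentially the same route as the paper: locate the dominant singularity $\tau$ as the point where $(A(x),B(x))$ meets the critical curve of $C(x,y)$, then transport the $X^{5}$ singular term of $C(x,y)$ through the composition. Your function $G(x)=1-A(x)/\rho(B(x))$ is exactly $1-D(x)$ in the paper's notation, and your $g(\tau)$ equals the paper's $\tau D'(\tau)$, so the resulting formula $K_{5}=C_{5}(B(\tau))\,g(\tau)^{5/2}$ coincides with theirs; the only cosmetic difference is that the paper parametrises the critical curve as $(X(t),Y(t))$ rather than via $\rho(y)$ when solving for $\tau$ numerically.
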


\begin{proof}

Recall Equation~(\ref{eq:simpleunikernel}) from Section~\ref{sec:equations}:
\begin{equation}\label{eq:kernelrep}
K(x) = C\left(A(x),B(x) \right)+E(x),
	\end{equation}
where $A(x), B(x)$ and $E(x)$ are explicit functions. 
In order to obtain an estimate for $k_n$ we need
to locate the dominant singularity of $K(x)$.
The singularities of $C(x,y)$ is given by
$(X(t),Y(t))$, where $t\in(0,1)$ and $X$, $Y$ are
explicit functions defined in~\cite{gn}.
Hence the singularity $\tau$ of $K(x)$ is obtained by solving
$$
X(t) = A(\tau), \qquad  Y(t) = B(\tau).
$$
The smallest positive solution $\tau$ of the system can be computed numerically
and is $\tau \approx 0.0469$. The exponential growth constant
is then $\gamma_3 = \tau^{-1} \approx 21.3102$.

The singular expansion of $C(x,y)$
at the singularity $x=\rho(y)$ is of the form
$$C(x,y) = C_0(y)+C_2(y)X^2+C_4(y)X^4+C_5(y)X^5+O(X^6),$$
where $X = \sqrt{1-x/\rho(y)}$, and $C_5(y)$ is
an explicit expression computed in \cite{gn}. Plugging this expression
into (\ref{eq:kernelrep}) and expanding gives
\begin{equation}\label{eq:Ksing}
K(z) = K_0 + K_2Z^2 + K_4Z^4+K_5Z^5+O(Z^6),
\end{equation}
where  $Z = \sqrt{1-z/\tau}$.
In order to compute the dominant coefficient $K_5$, we need to expand $C_5(B(z))\left(1-D(z)\right)^{5/2}$, where
$D(z)= A(z)/\rho(B(z))$, at $z = \tau$. Consider the first-order Taylor expansion of $D(z)$:
$$
D(z) = D(\tau) + D'(\tau)(z-\tau) + O((z-\tau)^2).
$$
Since $(A(\tau),B(\tau))$ is a singular point of $C(x,y)$, we have
$$A(\tau) = \rho(B(\tau)), \qquad 
D(\tau) = 	\frac{A(\tau)}{\rho(B(\tau))} = 1.
$$
 Therefore,
$\sqrt{1-D(z)}$ is computed as
$$\sqrt{\tau D'(\tau)(1-z/\tau) + O((x-\tau)^2) } =
\sqrt{\tau D'(\tau)}Z + O(Z^2),
 $$
 hence $(1-D(z))^{5/2} = (\tau D'(\tau))^{5/2}Z^5+O(Z^6)$.
Since $C_5(y)$ is analytic at $y=B(\tau)$, we conclude that
$K_5 = C_5(B(\tau))(\tau D'(\tau))^{5/2}
\approx-0.2937\cdot 10^{-5}$.
The estimate for $k_n$ follows directly by the transfer theorem, with $\kappa_3 = K_5/\Gamma(-5/2) \approx 0.3107\cdot 10^{-5}$,
provided that $K$ can be analytically extended to a 
$\Delta$-domain at $\tau$. The proof is more technical than the previous proofs of $\Delta$-analyticity and is shown in the appendix. 
\end{proof}

Our next result is a limit law for the number of edges
in a random planar 3-graph.

\begin{theorem}
The number $X_n$ of edges in a random planar 3-graph with $n$ vertices
is asymptotically Gaussian with
$$
\ex X_n \sim \mu_3 n \approx 2.4065n, \qquad
\var X_n \sim \lambda_3 n \approx 0.3126n.
$$
\end{theorem}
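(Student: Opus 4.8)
The plan is to use the bivariate equation from Corollary~\ref{cor:kernel},
$$K(x,y) = C(A(x,y),B(x,y)) + E(x,y),$$
in which $y$ now marks edges in $K$, together with the singular structure of $C(x,y)$ established in \cite{gn}. First I would fix $y$ in a small real neighborhood of $1$ and locate the dominant singularity $\tau(y)$ of $x \mapsto K(x,y)$. Since $A$, $B$ and $E$ are elementary functions that are analytic in the relevant region (the only singularities of $E$ sit at $xy=-1$, far away), the singularity of the composition $x \mapsto C(A(x,y),B(x,y))$ comes only from the point where the curve $x \mapsto (A(x,y),B(x,y))$ meets the singularity curve $x=\rho(y)$ of $C$; that is, $\tau(y)$ is defined implicitly by
$$A(\tau(y),y) = \rho\big(B(\tau(y),y)\big),$$
which for $y=1$ is exactly the equation solved numerically in the proof of Theorem~\ref{th:3graphs}, with solution $\tau=\tau(1)\approx 0.0469$. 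One must check that this remains the unique dominant singularity for $y$ near $1$ (by continuity from the case $y=1$) and that a $\Delta$-domain can be chosen uniformly in $y$.

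Next I would transfer the square-root-type singular expansion. As in the proof of Theorem~\ref{th:3graphs}, writing $D(x,y) = A(x,y)/\rho(B(x,y))$, the identity $D(\tau(y),y)=1$ together with analyticity of $\partial_x D$ gives $\sqrt{1-D(x,y)} \sim \sqrt{\tau(y)\,\partial_x D(\tau(y),y)}\,Z$ with $Z=\sqrt{1-x/\tau(y)}$, uniformly for $y$ near $1$; composing with the expansion $C(x,y) = C_0(y)+C_2(y)X^2+C_4(y)X^4+C_5(y)X^5+O(X^6)$ of \cite{gn} yields
$$K(x,y) = K_0(y)+K_2(y)Z^2+K_4(y)Z^4+K_5(y)Z^5+O(Z^6),$$
with $K_5(y)$ analytic and nonzero near $y=1$. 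This puts $K(x,y)$ in the form required by the Quasi-powers Theorem with exponent $\alpha=-5/2\notin\{0,-1,-2,\dots\}$ and singularity function $\tau(y)$, so $X_n$ is asymptotically Gaussian with
$$\ex X_n \sim -\frac{\tau'(1)}{\tau(1)}\,n, \qquad \var X_n \sim \left(-\frac{\tau''(1)}{\tau(1)} - \frac{\tau'(1)}{\tau(1)} + \left(\frac{\tau'(1)}{\tau(1)}\right)^2\right)n.$$

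Finally I would compute $\tau'(1)$ and $\tau''(1)$ by differentiating the relation $A(\tau(y),y)-\rho(B(\tau(y),y))=0$ once and twice at $y=1$. This needs only $\tau=\tau(1)$, the value $\rho=\rho(B(\tau,1))$, the first and second derivatives of $\rho$ along the singularity curve of $C$ — equivalently the constants $\mu\approx 2.2133$ and $\lambda\approx 0.4303$ recorded above — and the partial derivatives of the explicit functions $A$ and $B$ at $(\tau,1)$. Substituting into the Quasi-powers formulas gives the stated constants $\mu_3\approx 2.4065$ and $\lambda_3\approx 0.3126$. The main obstacle is not this final (routine but lengthy) calculation but the analytic bookkeeping in the two preceding steps: verifying that the substitution $(A(x,y),B(x,y))$ genuinely meets the dominant branch of the singularity curve of $C$ for all $y$ in a neighborhood of $1$, that neither $A$, $B$ nor $E$ introduces a spurious closer singularity, and that the singular expansion of $C$ transfers with a uniformly valid $\Delta$-domain so that the Quasi-powers Theorem truly applies.
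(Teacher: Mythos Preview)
Your approach matches the paper's: the same bivariate equation $K(x,y)=C(A(x,y),B(x,y))+E(x,y)$, the same implicit definition $A(\tau(y),y)=\rho(B(\tau(y),y))$ of $\tau(y)$, the same transfer of the $X^5$ expansion into a $Z^5$ expansion, and the same application of the Quasi-powers Theorem with $\tau'(1),\tau''(1)$ obtained by implicit differentiation. One small correction to your final step: since $B(\tau,1)=2e^{-\tau/(1+\tau)}-1\approx 0.912\neq 1$, the derivatives of $\rho$ that enter the implicit differentiation are $\rho'(B(\tau,1))$ and $\rho''(B(\tau,1))$, not $\rho'(1)$ and $\rho''(1)$; hence the constants $\mu\approx 2.2133$ and $\lambda\approx 0.4303$ (which encode $\rho',\rho''$ at $y=1$) do not suffice, and the paper instead computes $\rho'$ at the needed point via the explicit parametrization $\rho=X\circ Y^{-1}$ from \cite{gn}.
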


\begin{proof}
Recall Equation~(\ref{eq:simplekernel}) from Section~\ref{sec:equations}:
\begin{equation}
K(x,y) = C\left(A(x,y),B(x,y) \right)+E(x,y),
\end{equation}
where
$$A(x,y) = xe^{(x^2y^3-2xy)/(2+2xy)}, \qquad
B(x,y) =  (y+1)e^{-xy^2/(1+xy)}-1, $$
$$
E(x,y) = -x+\frac{x^2y}{2+2xy}-\ln \sqrt{1+xy}+\frac{xy}{2}-\frac{(xy)^2}{4}.
$$
It follows that the singularity $\tau(y)$ of the univariate
function $x\mapsto K(x,y)$ is given by the equation
$$
A(\tau(y),y) = \rho(B(\tau(y),y)),
$$
where $\rho(y)$ is as before the singularity of  $x\mapsto C(x,y)$. The value of $\tau(1)=\tau$ is already known. In order to compute $\tau'(1)$ we
differentiate and obtain
$$
A_x(\tau,1)\tau'(1) +A_y(\tau,1) =
\rho'(B(\tau,1))\left[B_x(\tau,1)\tau'(1)+B_y(\tau,1)\right].
$$
Solving for  $\tau'(1)$ we obtain
$$
\tau'(1) = -{\rho'(B(\tau,1))B_y(\tau,1)-A_y(\tau,1)\over
\rho'(B(\tau,1))B_x(\tau,1)-A_x(\tau,1)}.
$$
Since $\rho = X\circ Y^{-1}$, where $X$ and $Y$ are explicit
functions defined in \cite{gn}, $\rho'(y)$ can be computed
as $X'(Y^{-1}(y))/Y'(Y^{-1}(y))$.
After some calculations we finally get a value of
$\tau'(1)\approx-0.1129$ and
$$
 \mu_3 =-\frac{\tau'(1)}{\tau(1)} \approx 2.4065.
$$
Using the same procedure we can isolate $\tau''(1)\approx0.3700$ and obtain
$$
\lambda_3 = -\frac{\tau''(1)}{\tau(1)}
-\frac{\tau'(1)}{\tau(1)}+
\left(\frac{\tau'(1)}{\tau(1)}\right)^2\approx 0.3126.
$$
In order to apply quasi-powers theorem we 
have to show that $K(x,y)$ is analytic in
a $\Delta$-domain for $y$ close enough to 1.
The proof is a direct extension  of  that of  the Lemma in the Appendix 
by adding  variable $y$ marking edges, and we omit it to avoid repetition. 
\end{proof}

Next we determine the limit law for the size of
the kernel in random planar 2-graphs.

\begin{theorem}
The size $Y_n$ of the kernel of a random planar 2-graph
with $n$ edges is asymptotically Gaussian with
\begin{equation}
\ex Y_n \sim \mu_K n
\approx 0.8259n,\qquad
\var Y_n \sim \lambda_K n \approx 0.1205n
\end{equation}
\end{theorem}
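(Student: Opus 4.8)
The plan is to mimic the treatment of the kernel of a 2-map (the theorem with singularity function $\chi(u)=\tau/(\tau+u)$), with the weighted–multigraph machinery of Section~\ref{sec:equations} playing the role of the elementary map substitutions.

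First I would introduce the bivariate generating function $H(x,u)$ of planar 2-graphs in which $x$ marks vertices and $u$ marks the vertices lying in the kernel (equivalently, the vertices of degree at least three; a 2-graph all of whose vertices have degree two is a cycle, whose kernel is empty). To obtain a functional equation I would carry the extra variable $u$ through the composition underlying Theorem~\ref{th:kernel}: a 2-graph is reconstructed from its kernel --- a weighted 3-multigraph --- by replacing each edge by a path and each loop or multi-edge by the configurations described around~(\ref{eq:multicoreskernel}). The only vertices created in this reconstruction have degree two and lie strictly inside the substituted paths, hence they are \emph{not} kernel vertices; so the path-substitution functions and all the correction terms ($\ln\sqrt{1\pm xy}$ and the polynomial pieces) are exactly those occurring at $u=1$, while the vertex variable fed into $C$ acquires a factor $u$. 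This produces an equation of the form
\[
H(x,u)=C\bigl(\mathcal A(x,u),\mathcal B(x,u)\bigr)+\mathcal E(x,u),
\]
with $\mathcal A,\mathcal B,\mathcal E$ explicit elementary functions and $H(x,1)=H(x)=C(xe^{-x})-x+x^2/2$ by Theorem~\ref{th:cores}.

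Next, exactly as in the proofs of Theorem~\ref{th:3graphs} and of the edge-counting law for 3-graphs, the dominant singularity $\chi(u)$ of $x\mapsto H(x,u)$ is the least positive solution of
\[
\mathcal A(\chi(u),u)=\rho\bigl(\mathcal B(\chi(u),u)\bigr),
\]
$\rho(y)$ being the singularity of $x\mapsto C(x,y)$, obtained from the parametrisation $\rho=X\circ Y^{-1}$ of~\cite{gn}; at $u=1$ this returns $\chi(1)=\sigma$, the singularity of $H(x)$. Implicit differentiation, using $\rho'(y)=X'(Y^{-1}(y))/Y'(Y^{-1}(y))$ and one further derivative for $\rho''$, yields the numerical values of $\chi'(1)$ and $\chi''(1)$. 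Since the correction term $\mathcal E$ and the functions $\mathcal A,\mathcal B$ are analytic past $x=\chi(u)$, and the singular expansion of $C(x,y)$ has the shape $C_0(y)+C_2(y)X^2+C_4(y)X^4+C_5(y)X^5+O(X^6)$ with $X=\sqrt{1-x/\rho(y)}$ and $C_5(y)$ analytic, substitution into the equation above shows that near $x=\chi(u)$ the function $H(x,u)$ has a branch term of exponent $5/2$ with coefficient analytic in $u$; this is precisely the form $A(z,u)+B(z,u)C(z,u)^{-\alpha}$ required by the quasi-powers theorem, with $\alpha=-5/2$ and $C(z,u)=1-z/\chi(u)$. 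Gaussianity of $Y_n$ follows, with
\[
\ex Y_n\sim\left(-\frac{\chi'(1)}{\chi(1)}\right)n\approx 0.8259\,n,\qquad
\var Y_n\sim\left(-\frac{\chi''(1)}{\chi(1)}-\frac{\chi'(1)}{\chi(1)}+\left(\frac{\chi'(1)}{\chi(1)}\right)^2\right)n\approx 0.1205\,n.
\]

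The main obstacle is the bookkeeping in the first step: threading the kernel-marking variable through the three nested substitutions (core $\to$ 2-multigraph, 2-multigraph $\to$ kernel multigraph, kernel multigraph $\to$ weighted simple graph) while keeping the multigraph weights and every correction term consistent, and ensuring that the factor $u$ is attached to genuine kernel vertices only and not to the degree-two vertices introduced by the path substitutions. Once $\mathcal A,\mathcal B,\mathcal E$ are in hand the remaining steps are routine, although certifying the constants $\mu_K\approx 0.8259$ and $\lambda_K\approx 0.1205$ does require the expansions of $\rho(y)$ and its first two derivatives from~\cite{gn} with sufficient precision.
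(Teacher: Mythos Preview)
Your approach is essentially the same as the paper's: mark kernel vertices with an extra variable $u$, carry it through the multigraph decompositions of Section~\ref{sec:equations} to obtain $H(x,u)=C(A(x,u),B(x,u))+F(x,u)$, locate the singularity $\chi(u)$ via $A(\chi(u),u)=\rho(B(\chi(u),u))$, and apply the quasi-powers theorem. The paper in fact writes out $A(x,u)$ and $B(x,u)$ explicitly (both depend on $u$, not only through a prefactor on the vertex variable) and records a closed form $\mu_K=(2\rho'(1)e^{\sigma}+\sigma^2-\sigma+1)/(1-\sigma)$, but structurally your plan matches it step for step.
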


\begin{proof}
Recall that the decomposition of a simple 2-graph into its kernel gives
$$
\begin{array}{ll}
H(x) = \widetilde H(x,0,1,0,\ldots)
\\ 
= \widetilde K\left(x,\frac{x^2}{1-x},\frac{1}{1-x},
\ldots,k\left(\frac{x}{1-x}\right)^{k-1}+
\left(\frac{x}{1-x}\right)^{k},\ldots\right)+E(x,1).
\end{array}
$$
If $u$ marks the size of the kernel then
$$H(x,u) = \widetilde K\left(ux,\frac{x^2}{1-x},\frac{1}{1-x},
\ldots,k\left(\frac{x}{1-x}\right)^{k-1}+
\left(\frac{x}{1-x}\right)^{k},\ldots\right)+E(x,1).$$
Composing with Equations~(\ref{eq:multikernels}) and (\ref{eq:multicon})
we get
$$H(x,u) = C\left(A(x,u),
B(x,u)
\right)+F(x,u)
$$
where
$$
A(x,u)=ux\exp\left({-x
\left(2u+x+{u}^{2}x-2\,ux \right)
\over 2(1-x+ux)}\right),
$$
$$
B(x,u)=-1+2\exp\left({x
\left( 1-u \right)\over1-x+ux}\right),
$$
and $F(x,u)$ is a correction term which does not affect the singular analysis. It follows that the singularity $\chi(u)$ of the univariate function $x\mapsto H(x,u)$ is given by the equation
$$
A(\chi(u),u)=\rho(B(\chi(u),u)),
$$
If we differentiate the former expression and replace u with 1 we get
$$
A_x(\sigma,1)\chi'(1)+A_y(\sigma,1)=
\rho'(1)(B_x(\sigma,1)\chi'(1)+B_y(\sigma,1)).
$$
Note that $\chi(1)=\sigma$, where $\sigma$ is, as before,
the singularity of the generating function $H(x)$ of planar
2-graphs. Moreover, $B(x,1)=1$.
After some calculations we finally get $\chi'(1)\approx-0.03135$ and
$$\mu_K = -\frac{\chi'(1)}{\chi(1)} =
\frac{2\rho'(1)e^{\sigma}+\sigma^2-\sigma+1}{1-\sigma}.$$
This is computed using the known values of $\sigma$ and $\rho'(1)=-\rho\mu$.
Using the same procedure we can isolate $\chi''(1)\approx0.05295$ and compute
$\lambda_K$ as
$$
\lambda_K = -{\chi''(1)\over \chi(1)}
-{\chi'(1)\over \chi(1)} + \left({\chi'(1)\over \chi(1)}\right)^2
\approx 0.1205.
$$
We  need to show that $H(x,u)$ is analytic in a
$\Delta$-domain. If $u=1$ we already know it for 
$H(x,1)$.
Since $A(x,u)$ and $B(x,u)$ are
both analytic, and $A(\sigma,1)=\rho$ and $B(x,1)=1$,
then for $u$ close enough to $1$ and $x$ close enough
to $\chi(u)$, by continuity, if $\arg(x/\chi(u)-1)>\alpha$
then $\arg(A(x,u)/\rho(B(x,u))-1)>\beta$ for some $\beta>0$,
as in the proof of Theorem~\ref{th:sizecore}.
Also, if  $|x|=\sigma$ but $x\neq \sigma$,
then we know that $H(x,1)$ is analytic near $x$.
Again by continuity, if $u$ is close enough to 1
then $H(x,u)$ is analytic at $(x,u)$,
and by compactness this is sufficient to prove analyticity in a $\Delta$-domain.
\end{proof}

Note that, since the expected size of the core of a random connected
planar graph is $1-\sigma$, the expected size of the kernel of a random
connected planar graph with $n$ vertices is
asymptotically
$(1-\sigma)\mu_K n = (2\rho'(1)e^{\sigma}+\sigma^2-\sigma+1)n \approx 0.7944n$.

\section{Degree distribution}\label{sec:degree}

In this section we compute the limit probability that a vertex
of a planar 2-graph or 3-graph has a given degree.
In order to do that, we compute
the probability distribution of the root of a rooted planar 2-graph
and 3-graph. Since every vertex is equally likely to be the root,
we conclude that the average distribution is the same. Note that
this is not true for maps, so in this section we only compute
the distribution for graphs.
This section is rather technical, especially the part of 3-graphs,
so that is why we separate its content from that of Section~\ref{sec:graphs}.

Let $c^{\bullet}_n$ be the number of rooted connected planar graphs
with $n$ vertices, i.e., $c^{\bullet}_n = n\cdot c_n$.
Let $C^{\bullet}(x) = \sum c^{\bullet}_n x^n = xC'(x)$ be its associated
generating function.
Let $c^{\bullet}_{n,k}$ be the number of rooted connected planar graphs
with $n$ vertices and such that the root degree is exactly $k$.
Let $C^{\bullet}(x,w) = \sum c^{\bullet}_{n,m} x^n u^m$ be its associated
generating function. The limit probability $d_k$ that the root vertex has
degree $k$ can be obtained as
$$
d_k = \lim_{n\rightarrow \infty}{c^{\bullet}_{n,k}\over c^{\bullet}_n}=
\lim_{n\rightarrow \infty}{[x^n][w^k]C^{\bullet}(x,w)\over [x^n]C^{\bullet}(x)}.
$$
Therefore, the probability distribution $p(w) =\sum d_k w^k$ can be
obtained from the knowledge of $C^{\bullet}(w,u)$. In \cite{degree} this
function is computed, and $d_k$ is proven to be asimptotically
$$
d_k \sim c\cdot k^{-1/2}q^k,
$$
where $c\approx 3.0175$ and $q\approx 0.6735$ are computable constants.
Our goal is to obtain similar results for 2-graphs and 3-graphs, by
respectively computing generating function $H^{\bullet}(x,w)$
and $K^{\bullet}(x,w)$ in terms of $C^{\bullet}(x,w)$.

\subsection{2-graphs}

\begin{theorem}\label{th:coresdeg}
Let $h^{\bullet}_{n,k}$ be the number of rooted 2-graphs with
$n$ vertices and with root degree $k$. Let
$H^{\bullet}(x,w) = \sum h^{\bullet}_{n,k} x^n w^k$ be its associated
generating function. The following equation holds
\begin{equation}\label{eq:coresdeg}
H^{\bullet}(x,w) = e^{x(1-w)}C^{\bullet}(xe^{-x},w)
-xwC^{\bullet}(xe^{-x})-x+x^2w
\end{equation}
\end{theorem}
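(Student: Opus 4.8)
The plan is to follow the proof of Theorem~\ref{th:cores}, but to carry a distinguished (root) vertex and its degree along through the core decomposition, and then to invert the resulting identity by the substitution $z=xe^{-x}$. Throughout, $C^{\bullet}(z,w)$ denotes the generating function of rooted connected planar graphs with $z$ marking vertices and $w$ marking the degree of the root, so that $C^{\bullet}(z,1)=zC'(z)=C^{\bullet}(z)$, and similarly for $H^{\bullet}$. Write $T^{\bullet}(z,w)=ze^{wT(z)}$ for the generating function of rooted trees with the root degree marked by $w$; then $T^{\bullet}(z,1)=T(z)$ and $T^{\bullet}(z,w)/T(z)=e^{(w-1)T(z)}$. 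Since $zU'(z)=T(z)$, the series $T(z)$ is exactly the generating function of rooted connected graphs whose core is empty, so $C^{\bullet}(z)-T(z)$ enumerates the rooted connected graphs with non-empty core.

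Next I would decompose a rooted connected graph $\G$ with root $r$ according to the position of $r$ relative to the core $\C$ of $\G$:
\begin{enumerate}[(i)]
\item $r\in\C$: then $\G$ is built from the rooted 2-graph $(\C,r)$ by planting a rooted tree at each vertex of $\C$, and the tree planted at $r$ adds its own root degree to $\deg_{\G}(r)$; hence the root slot receives $T^{\bullet}(z,w)$ in place of $T(z)$, which multiplies $H^{\bullet}(T(z),w)$ by $T^{\bullet}(z,w)/T(z)$, contributing $e^{(w-1)T(z)}H^{\bullet}(T(z),w)$.
\item $\G$ is a tree: this contributes $T^{\bullet}(z,w)=ze^{wT(z)}$.
\item $\C\neq\emptyset$ and $r\notin\C$: detaching the subtree formed by $r$ and all of its descendants (inside the pendant tree containing $r$) leaves a rooted connected graph with non-empty core, rooted at the former parent of $r$; since $\deg_{\G}(r)$ equals $1$ plus the number of children of $r$ in that detached subtree, this contributes $w\,T^{\bullet}(z,w)\bigl(C^{\bullet}(z)-T(z)\bigr)$.
\end{enumerate}
These cases are disjoint and exhaustive, so summing them gives
$$
C^{\bullet}(z,w)=e^{(w-1)T(z)}H^{\bullet}(T(z),w)+ze^{wT(z)}\Bigl(1+w\bigl(C^{\bullet}(z)-T(z)\bigr)\Bigr).
$$

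Solving for $H^{\bullet}(T(z),w)$, multiplying through by $e^{(1-w)T(z)}$, and using $e^{(1-w)T(z)}\cdot ze^{wT(z)}=ze^{T(z)}=T(z)$, one obtains
$$
H^{\bullet}(T(z),w)=e^{(1-w)T(z)}C^{\bullet}(z,w)-T(z)-wT(z)C^{\bullet}(z)+wT(z)^{2}.
$$
It then remains to put $x=T(z)$, so that $z=xe^{-x}$ and $C^{\bullet}(z),C^{\bullet}(z,w)$ become $C^{\bullet}(xe^{-x}),C^{\bullet}(xe^{-x},w)$; this is precisely~(\ref{eq:coresdeg}). As a sanity check, starting from $C^{\bullet}(x,w)=x+x^{2}w+(w+w^{2})x^{3}+\cdots$ the formula returns $H^{\bullet}(x,w)=\frac{1}{2}w^{2}x^{3}+\cdots$, consistent with the three rooted triangles.

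The step I expect to be the main obstacle is the degree bookkeeping in cases (i) and (iii): one must recognise that the root degree splits additively — into the core degree plus the number of children of the planted tree at $r$ in case (i), and into $1$ plus the number of children of the detached subtree in case (iii) — so that it corresponds to a product of powers of $w$, which is what turns ``substituting a different class at the root slot'' into multiplication by $T^{\bullet}(z,w)/T(z)=e^{(w-1)T(z)}$. Once the composition identity above is in place, the remaining inversion is the same routine manipulation as in the proof of Theorem~\ref{th:cores}.
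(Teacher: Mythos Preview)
Your proposal is correct and follows essentially the same approach as the paper: both decompose a rooted connected graph according to whether the root lies in the core, in a pendant tree of a graph with non-empty core, or in a tree, and then invert via $x=T(z)$. The only cosmetic difference is in case~(iii): the paper records the contribution as $H^{\bullet}(T(z))\cdot wT(z,w)/(1-T(z))$ using the explicit ``path of trees from core to root'' picture, whereas you write it directly as $wT^{\bullet}(z,w)\bigl(C^{\bullet}(z)-T(z)\bigr)$; these agree because differentiating $C(z)=H(T(z))+U(z)$ gives $C^{\bullet}(z)-T(z)=H^{\bullet}(T(z))/(1-T(z))$, and your formulation has the minor advantage of not needing that auxiliary identity in the inversion step.
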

\begin{proof}
The decomposition of a graph into ins core and the attached rooted trees
implies the following equation:
$$
C^{\bullet}(z,w) = H^{\bullet}(T(z),w){T(z,w)\over T(z)}+
H^\bullet(T(z)){wT(z,w)\over 1-T(z)}+T(z,w),
$$
where $T(z,w) = z\cdot e^{wT(z)}$ is the generating function of rooted trees where
$w$ marks the degree of the root.
The first addend corresponds to the case where the root is in the core.
In this case, the degree of the graph root is the degree of the core root
plus the degree of the root of its appended tree. The second addend
corresponds to the case where the root is in an attached tree.
In this case there is a sequence of trees between the core and the
root, and finally a rooted tree. The degree of the graph root
is the degree of the root of the rooted tree plus one. The last addend
corresponds to the case where the graph is a tree, and therefore
its core is empty.

In order to invert the former relation let $x = T(z)$ so that
$$
z = xe^{-x},\quad T(z,w) = x e^{-x(1-w)},\quad
H^{\bullet}(T(z)) = (1-x)C^{\bullet}(xe^{-x})+x^2-x.
$$
After some calculations we obtain
$$
H^{\bullet}(x,w) = e^{x(1-w)}C^{\bullet}(xe^{-x},w)
-xwC^{\bullet}(xe^{-x})-x+x^2w =
$$
$$
={1\over 2}w^2x^3+\left(w^2+{2\over 3}w^3\right)x^4+
\left({9\over 2}w^2+{13\over 3}w^3+{41\over 24}w^4 \right)x^5+\ldots
$$
\end{proof}

The probability distribution $p(w)$ can be computed using transfer theorems.
The expansion of $C^\bullet (x,w)$ near the singularity $x=\rho$ gives
the following equation
\begin{equation}\label{eq:condeg}
C^\bullet (x,w) = C_0(w) + C_2(w)X^2 + C_3(w)X^3 + O(X^4),
\end{equation}
where $X = \sqrt{1-x/\rho}$. The probability
distribution can be computed as
$$
p(w) = {C_3(w)\over C_3(1)}.
$$
Our goal is to obtain the same result by applying the relation
obtained in (\ref{eq:coresdeg}).

\begin{theorem}
Let $e_k$ be the limit probability that a random vertex has degree $k$
in a 2-graph. Let $p_H(w) = \sum e_k w^k$ be its probability distribution.
Let $p(x)$ be as before. The following equation holds:
\begin{equation}\label{eq:distcores}
p_H(w) = {e^{\sigma(1-w)}p(w)-\sigma w \over 1-\sigma},
\end{equation}
where $\sigma = T(\rho)$, as in Theorem~\ref{th:2graphs}.
Furthermore, the limiting probability that the degree of
a random vertex is equal to $k$ exists, and is asymptotically
$$
p_H(k)\sim \nu_2k^{-1/2}q^k,
$$
where $q\approx 0.6735$ and $\nu_2\approx3.0797$.
\end{theorem}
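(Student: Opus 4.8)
The plan is to feed the singular expansion of $C^{\bullet}$ through the substitution $x\mapsto xe^{-x}$ in the identity
$$
H^{\bullet}(x,w) = e^{x(1-w)}C^{\bullet}(xe^{-x},w) - xw\,C^{\bullet}(xe^{-x}) - x + x^2w
$$
of Theorem~\ref{th:coresdeg}, exactly as the univariate estimate of Theorem~\ref{th:2graphs} was obtained. First I would record the elementary facts that $g(x)=xe^{-x}$ is increasing on $[0,1]$, that $g(\sigma)=\rho$ (this is just the relation $\rho=\sigma e^{-\sigma}$ defining $\sigma=T(\rho)$), and that $g'(\sigma)=e^{-\sigma}(1-\sigma)\neq0$; hence $g$ is a local biholomorphism at $\sigma$ taking a $\Delta$-domain at $\sigma$ into one at $\rho$. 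Since $C^{\bullet}(\cdot,w)$ is $\Delta$-analytic at $\rho$ with expansion (\ref{eq:condeg}) by \cite{degree}, and $e^{x(1-w)}$ is entire, it follows that $H^{\bullet}(x,w)$ is $\Delta$-analytic at $x=\sigma$, so the transfer theorem applies there.

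Next, writing $X=\sqrt{1-x/\sigma}$ and substituting $x=\sigma(1-X^2)$ gives $g(x)/\rho=(1-X^2)e^{\sigma X^2}=1-(1-\sigma)X^2+O(X^4)$, hence $\sqrt{1-g(x)/\rho}=\sqrt{1-\sigma}\,X+O(X^3)$ and $(1-g(x)/\rho)^{3/2}=(1-\sigma)^{3/2}X^3+O(X^4)$. Plugging (\ref{eq:condeg}) into the identity, the analytic parts of the two composed occurrences of $C^{\bullet}$ contribute a power series in $X^2$, while the first non-analytic term has order $X^3$ with coefficient
$$
H_3(w)=(1-\sigma)^{3/2}\bigl(e^{\sigma(1-w)}C_3(w)-\sigma w\,C_3(1)\bigr),
$$
the analytic prefactors $e^{x(1-w)}$ and $-xw$ being replaced by their values at $x=\sigma$. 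Thus $H^{\bullet}(x,w)=H_0(w)+H_2(w)X^2+H_3(w)X^3+O(X^4)$, and dividing by $H_3(1)=(1-\sigma)^{3/2}(1-\sigma)C_3(1)$ and using $p(w)=C_3(w)/C_3(1)$ yields $p_H(w)=H_3(w)/H_3(1)=(e^{\sigma(1-w)}p(w)-\sigma w)/(1-\sigma)$, which is (\ref{eq:distcores}). That $e_k=\lim_n h^{\bullet}_{n,k}/h^{\bullet}_n$ exists and equals $[w^k]p_H(w)$ then follows by applying the transfer theorem to $[w^k]H^{\bullet}(x,w)$ for each fixed $k$ and dividing by $[x^n]H^{\bullet}(x,1)$; as a sanity check, $\sum_k e_k=p_H(1)=1$.

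For the tail estimate I would invoke the known $d_k=[w^k]p(w)\sim c\,k^{-1/2}q^k$ from \cite{degree}. Since $p_H(w)$ is obtained from $p(w)$ by multiplying by the entire function $e^{\sigma(1-w)}$ and subtracting a polynomial, it inherits the dominant singularity $w=1/q$ of $p(w)$, of the same $(1-qw)^{-1/2}$-type, so the transfer theorem gives $p_H(k)\sim\nu_2 k^{-1/2}q^k$ with the exponential rate $q$ unchanged; equivalently, expanding $e^{\sigma(1-w)}=e^{\sigma}\sum_m(-\sigma)^m w^m/m!$ and using $\sum_m(\sigma/q)^m/m!=e^{\sigma/q}<\infty$, the convolution of a geometric-times-power sequence against an absolutely summable one gives $[w^k]\bigl(e^{\sigma(1-w)}p(w)\bigr)\sim e^{\sigma(1-1/q)}d_k$, whence $\nu_2=c\,e^{\sigma(1-1/q)}/(1-\sigma)$; plugging in $\sigma\approx0.0382$, $q\approx0.6735$, $c\approx3.0175$ yields $\nu_2\approx3.0797$. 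The only delicate point is the passage through $x\mapsto xe^{-x}$: one must be sure the $\Delta$-domain is transported correctly and that the singularity of $H^{\bullet}(x,w)$ in $x$ really stays at $\sigma$ with the stated square-root order, which rests on the detailed analytic description of $C^{\bullet}(x,w)$ from \cite{gn,degree}; the remaining steps are routine bookkeeping.
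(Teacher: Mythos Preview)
Your proposal is correct and follows essentially the same route as the paper: you feed the singular expansion of $C^{\bullet}(x,w)$ through the substitution $x\mapsto xe^{-x}$ in the identity of Theorem~\ref{th:coresdeg}, read off $H_3(w)=(1-\sigma)^{3/2}\bigl(e^{\sigma(1-w)}C_3(w)-\sigma w\,C_3(1)\bigr)$, and divide by $H_3(1)$ to obtain (\ref{eq:distcores}); for the tail you transfer the $(1-w/r)^{-1/2}$ singularity of $p(w)$ at $r=1/q$ through the entire prefactor $e^{\sigma(1-w)}$, which is exactly what the paper does (your formula $\nu_2=c\,e^{\sigma(1-1/q)}/(1-\sigma)$ is the coefficient version of the paper's $Q_{-1}=P_{-1}e^{\sigma(1-r)}/(1-\sigma)$). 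The only addition on your side is the explicit convolution argument and the more careful remark about transporting the $\Delta$-domain under $g(x)=xe^{-x}$, both of which are welcome but do not change the strategy.
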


\begin{proof}
Since $C^\bullet(x,w)$ satisfies (\ref{eq:condeg}), and
$H^\bullet(x,w)$ satisfies (\ref{eq:coresdeg}), we obtain
$$
H^\bullet(x,w) = H_0(w) + H_2(w)X^2 + H_3(w)X^3+O(X^4),
$$
where $X = \sqrt{1-x/\sigma}$, and $H_3(w)$ is computed as
$$
H_3(w) = e^{\sigma(1-w)}C_3(w)(1-\sigma)^{3/2}-
w\sigma C_3(1) (1-\sigma)^{3/2}
$$
The probability generating function of the distribution
is given by
$$
p_H(w) = {H_3(w)\over H_3(1)} =
{(1-\sigma)^{3/2}\left(e^{\sigma(1-w)}C_3(w) - w\sigma C_3(1)\right)
\over (1-\sigma)^{3/2}C_3(1)(1-\sigma)}
= {e^{\sigma(1-w)}p(w)-\sigma w \over 1-\sigma}.
$$

The asymptotics of the distribution can be obtained from $p(w)$.
The singularity of $p(w)$ is obtained in \cite{degree} as
$r \approx 1.4849$. The expansion of $p(w)$ near the singularity
is computed as
$$
p(w) = P_{-1}W^{-1} + O(1),
$$
where $P_{-1}\approx 5.3484$ is a computable constant,
and $W=\sqrt{1-w/r}$. Plugging this expression into (\ref{eq:distcores})
we get
$$
p_H(w) = Q_{-1}W^{-1} + O(1),
$$
where $Q_{-1} = P_{-1}e^{\sigma(1-r)}/(1-\sigma) \approx 5.4586$.
The estimate for $p_H(k)$ follows directly by singularity analysis.
\end{proof}

\subsection{3-graphs}

In order to prove a similar result for 3-graphs, we need to extend
the generating function $C^{\bullet}(x.w)$ so that it takes edges
into account. This function $C^{\bullet}(x,y,w)$
was computed in \cite{degree}, and our goal is to obtain the
analogous generating function for 3-graphs, $K^{\bullet}(x,w)$,
in terms of 
We remark that the expression given in \cite{degree} for 
$C^{\bullet}(x,y,w)$ is extremely involved and needs several pages to write it down.

\begin{theorem}\label{th:kerneldeg}
Let $k^{\bullet}_{n,k}$ be the number of rooted 3-graphs with
$n$ vertices and with root degree $k$. Let
$K^{\bullet}(x,w) = \sum k^{\bullet}_{n,k} x^n w^k$ be its associated
generating function. The following equation holds
\begin{equation}\label{eq:kerneldeg}
\begin{split}
K^\bullet (x,w) = B_0(x,w)\cdot
 C^\bullet\left(B_1(x),B_2(x),B_3(x,w)
\right)+A(x,w)
\end{split}
\end{equation}
where
$$
B_0(x,w) = e ^{(w^2-1)x^2/(2+2x) +  x(1-w)/(1+x)}, \qquad
B_1(x) = x e^{(x^2-2x)/(2+2x)},
$$
$$
B_2(x) = 2e^{-x/(1+x)}-1,\qquad
B_3(x,w) = \frac{(1+w)e^{-wx/(1+x)}-1}{2e^{-x/(1+x)}-1},
$$
$$
A(x,w) = A_0(x)+A_1(x)w+A_2(x)w^2,
$$
and $A_0(x)$, $A_1(x)$, $A_2(x)$ are analytic functions.
\end{theorem}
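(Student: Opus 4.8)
The plan is to carry a root--degree marker $w$ through the same three--step chain of combinatorial decompositions that was used in Section~\ref{sec:equations} to pass from $C(x,y)$ to $K(x,y)$, now working in the pointed category of graphs rooted at a vertex. Recall that there one composed: (1) the passage from simple connected graphs to weighted connected multigraphs by attaching loops and expanding edges, encoded by~(\ref{eq:multicon}); (2) the passage from connected multigraphs to $2$-multigraphs by grafting a rooted tree at every vertex, encoded by~(\ref{eq:multicores}); and (3) the passage from $2$-multigraphs to $3$-multigraphs by subdividing each edge into a path, encoded by~(\ref{eq:multicoreskernel}). I would produce the pointed analogue of each of these three substitutions, keeping track of how the degree of the distinguished vertex transforms, and then specialise as in Corollary~\ref{cor:kernel} (that is, set the loop variable and the higher--multiplicity edge variables to their ``simple'' values). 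Since the unpointed series $\widetilde K(x,z,y_1,y_2,\dots)$ is already known from Theorem~\ref{th:kernel}, it will reappear as an auxiliary ingredient whenever a branch of the decomposition does not contain the root.

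Concretely, for step (3) a pointed $2$-multigraph arises from a pointed $3$-multigraph in one of three ways: the root stays on a vertex of the kernel, in which case its degree is unchanged and $w$ is inert in that branch (the subdivision acts only on the non--root arguments exactly as in~(\ref{eq:multicoreskernel})); the root is placed at an interior vertex of one of the subdividing paths, where it has degree two, contributing $w^2$ times the unpointed series $\widetilde K$ times the generating function of a marked interior vertex of a path; or the whole $2$-multigraph is a cycle (empty kernel), whose root again has degree two. This gives $\widetilde H^{\bullet}(\,\cdot\,;w)$ in terms of $\widetilde K^{\bullet}(\,\cdot\,;w)$, $\widetilde K$ and elementary functions, which I invert for $\widetilde K^{\bullet}$. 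For step (2) a pointed connected multigraph is obtained from a pointed $2$-multigraph by grafting rooted trees: if the root lands in the tree grafted at the core--root its degree is (core--root degree) $+$ (tree--root degree), and if it lands in another tree its degree is (tree--root degree) $+1$; using $T(x,w)=xe^{wT(x)}$ this is the multigraph--decorated analogue of the identity in the proof of Theorem~\ref{th:coresdeg}, and after the change of variables $x=T(z)$ — together with $T(z,w)=xe^{-x(1-w)}$ — it expresses $\widetilde H^{\bullet}$, hence $\widetilde K^{\bullet}$, through $\widetilde C^{\bullet}$. Finally, for step (1), passing from the simple pointed series $C^{\bullet}(x,y,w)$ to $\widetilde C^{\bullet}$ means adding loops at every vertex and blowing up each edge into a multi--edge; at the root each loop adds $2$ to the degree and an incident $j$-edge adds $j$, so this produces a prefactor of the form $B_0(x,w)$ and replaces $w$ by a function $B_3(x,w)$, while the non--root arguments reproduce exactly $B_1(x)=A(x,1)$ and $B_2(x)=B(x,1)$ of Corollary~\ref{cor:kernel}. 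Composing the three pointed substitutions and collecting the leftover elementary terms yields~(\ref{eq:kerneldeg}).

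The main obstacle is the exact bookkeeping of the ``local at the root'' contributions in steps (1)--(3), which interleave rather than decouple: one must check that the loops attached at the root produce precisely the factor $e^{(w^2-1)x^2/(2+2x)}$ (each loop adding $2$ to the degree, with the multigraph weights $1/(2^ii!)$ and the subdivision--induced rescaling of the loop variable), that the tree hung at the root contributes $e^{x(1-w)/(1+x)}$ after the change of variables, and that the root edge, under multiplicity expansion followed by path subdivision, transforms as $w\mapsto B_3(x,w)=\bigl((1+w)e^{-wx/(1+x)}-1\bigr)/\bigl(2e^{-x/(1+x)}-1\bigr)$ and not some neighbouring expression, since a single mislaid factorial, sign, or orientation factor propagates into all of $B_0$, $B_3$ and the remainder. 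The other delicate point is the correction term $A(x,w)$: it is supported on the exceptional configurations where the kernel is empty or where short cycles (loops and double edges, which are not simple) must be subtracted, and a short analysis of these shows that $A$ is a polynomial in $w$ of degree at most $2$ with analytic coefficients $A_0(x),A_1(x),A_2(x)$ — which is all that the subsequent singularity analysis requires, so I would not insist on closed forms for the $A_i$.
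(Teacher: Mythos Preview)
Your proposal is correct and follows essentially the same route as the paper: the paper proves three lemmas giving the pointed, degree-tracking analogues of (\ref{eq:multicon}), (\ref{eq:multicores}) and (\ref{eq:multicoreskernel}) --- exactly your steps (1), (2), (3) --- and then composes them and specialises to simple graphs. The only minor imprecision is your description of where $A(x,w)$ comes from: the $w^1$ part arises in step~(2) from the branch where the root lies in an attached tree (not in the core), and the $w^2$ part arises in step~(3) from the branch where the root lies on an interior vertex of a subdividing path (or on a cycle), rather than from ``short cycles that must be subtracted''; but your conclusion that $A$ is a polynomial of degree at most~$2$ in $w$ with analytic coefficients is exactly what the paper records and uses.
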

In order to prove this theorem we need some technical lemmas
that relate different classes of graphs.
\begin{lemma}
Let $\widetilde{C}^{\bullet}(x,w,z,y_1,\ldots,y_k,\ldots)$ be the generating
function of rooted connected planar weighted multigraphs where $x$ marks
vertices, $w$ marks the root degree, $z$ marks loops, and $y_k$ marks
$k$-edges. The following equation holds
\begin{equation}\label{eq:multigraphsdeg}
\widetilde{C}^\bullet (x,w,z,y_1,\ldots,y_k\ldots) =
e ^{z\cdot(w^2-1)/2} C^\bullet\left(x e^{z/2},\sum_{i\geq 1}\frac{y_i}{i!},
\frac{\sum_{i\geq 1}w^i\cdot y_i /i!}{\sum_{i\geq 1} y_i /i!}\right).
\end{equation}
\end{lemma}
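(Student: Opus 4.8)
The plan is to derive (\ref{eq:multigraphsdeg}) as the rooted, root-degree-tracking refinement of Equation~(\ref{eq:multicon}). Recall the combinatorial fact underlying (\ref{eq:multicon}): every connected planar multigraph $\widetilde{\mathcal{G}}$ arises from a unique connected simple planar graph $\mathcal{G}$ by (i) replacing each edge of $\mathcal{G}$ by an $i$-edge for some $i\ge1$, and (ii) attaching some number of loops at each vertex; moreover the weight $w(\widetilde{\mathcal{G}})$ factors as a product over these local choices. Since none of these operations creates or relabels vertices, the underlying labelled vertex set --- and hence the $n!$ in the exponential generating function --- is untouched, so the construction translates directly into a substitution in the generating function of $C^{\bullet}(x,y,w)$.

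First I would set up the weight bookkeeping per local piece. Attaching $j$ loops at a fixed vertex contributes weight $1/(2^j j!)$ and is marked $z^j$, so summing over $j\ge0$ gives the per-vertex factor $\sum_{j\ge0} z^j/(2^j j!)=e^{z/2}$; this is the origin of the substitution $x\mapsto xe^{z/2}$. Replacing an edge by an $i$-edge contributes weight $1/i!$ and is marked $y_i$, so the per-edge factor is $\sum_{i\ge1} y_i/i!$; this is the origin of the substitution of $\sum_{i\ge1}y_i/i!$ for the edge variable of $C^{\bullet}$. So far this only re-derives (\ref{eq:multicon}) with a root carried along passively.

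Next I would track the root degree. If the root $v$ has degree $k$ in the simple graph $\mathcal{G}$ (so $w$ is raised to the power $k$ in $C^{\bullet}(x,y,w)$), then each of the $k$ edges incident to $v$, when replaced by an $i$-edge, contributes $i$ rather than $1$ to the degree of $v$; hence a root-incident edge should carry the factor $\sum_{i\ge1} w^i y_i/i!$ instead of $\sum_{i\ge1} y_i/i!$. Since there are exactly $k$ such edges, the net effect on the $w^k$ term, after the generic edge substitution has been applied, is multiplication by $\bigl(\sum_i w^i y_i/i!\bigr)^k\big/\bigl(\sum_i y_i/i!\bigr)^k$, which is precisely the substitution $w\mapsto\bigl(\sum_i w^i y_i/i!\bigr)\big/\bigl(\sum_i y_i/i!\bigr)$ in $C^{\bullet}$. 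Finally, each loop attached at $v$ contributes $2$ to the root degree, so loops at the root are governed by $\sum_{j\ge0} z^j w^{2j}/(2^j j!)=e^{zw^2/2}$ rather than the $e^{z/2}$ already supplied by the $x$-substitution; one therefore multiplies by the correction factor $e^{zw^2/2}/e^{z/2}=e^{z(w^2-1)/2}$. Collecting these three substitutions together with this prefactor yields exactly the right-hand side of (\ref{eq:multigraphsdeg}).

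The generating-function substitution calculus is routine; the point that needs care is the independence and disjointness of the local operations. In particular one must check that the $k$ root-incident edges of the simple graph are exactly the edges receiving the $w$-weighted factor, that loops at the root are accounted for only by the prefactor $e^{z(w^2-1)/2}$ and not double-counted by the $x$-substitution, and that the multiplicativity of $w(\widetilde{\mathcal{G}})$ over vertices and over edges makes all these contributions combine as a clean product. I would make this precise by writing out the decomposition of a single multigraph $\widetilde{\mathcal{G}}$, verifying the weight identity on that instance, and only then passing to generating functions; comparing the first few coefficients against the known expansion of $C^{\bullet}$ provides a useful consistency check.
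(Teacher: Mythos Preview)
Your proposal is correct and follows essentially the same approach as the paper: both start from the decomposition underlying (\ref{eq:multicon}) and refine it by replacing the per-edge factor $y_i/i!$ with $w^i y_i/i!$ for root-incident edges and the per-vertex loop factor $z$ with $zw^2$ at the root. Your write-up is more detailed about the bookkeeping (in particular the correction-factor interpretation of $e^{z(w^2-1)/2}$ and the ratio substitution for $w$), but the underlying argument is the same.
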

\begin{proof}
Given a simple connected planar graph $\mathcal{G}$,
a connected planar multigraph
can be obtained from $\mathcal{G}$ by replacing each edge with a multiple edge,
and placing 0 or more loops in each vertex (see proof of
Corollary~\ref{cor:kernel} for details). In the case of rooted graphs,
if we replace an edge incident to the root with a $i$ edge, its root degree
is increased in $i-1$. Therefore, instead of replacing such an edge
with a multiple edge with generating function $y_i/i!$, we replace it with
a multiple edge with generating function $w^iy_i/i!$.
Similarly, when we add a loop incident to the root vertex, the root degree
is increased by 2. Therefore, its associated generating function is not
$z$, but $zw^2$.
\end{proof}

\begin{lemma}
Let $\widetilde{H}^{\bullet}(x,w,z,y_1,\ldots,y_k,\ldots)$ be the generating
function of rooted planar weighted 2-multigraphs where $x$ marks
vertices, $w$ marks the root degree, $z$ marks loops, and $y_k$ marks
$k$-edges. The following equation holds
\begin{equation}\label{eq:multicoresdeg}
\begin{array}{ll}
\widetilde{H}^\bullet (x,w,z,y_1,\ldots,y_k\ldots) =
e^{y_1x(1-w)}\widetilde{C}^\bullet (xe^{-y_1 x},w,z,y_1,\ldots,y_k\ldots) \\
\qquad -w\cdot A(x,z,y_1,\ldots y_k,\ldots) - x,
\end{array}
\end{equation}
for a given function $A(x,z,y_1,\ldots y_k,\ldots)$ that does not
depend on $w$.
\end{lemma}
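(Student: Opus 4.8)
The plan is to run, in the multigraph setting, the same core decomposition that underlies Theorem~\ref{th:coresdeg}, carrying the loop variable $z$ and the multiplicity variables $y_2,y_3,\dots$ along as spectators. Indeed Equation~(\ref{eq:multicoresdeg}) is exactly the common generalization of Equation~(\ref{eq:coresdeg}) (pointed, but simple graphs) and Equation~(\ref{eq:multicores}) (multigraphs, but not pointed), and both ingredients are already available.

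First I would write down the decomposition. A connected multigraph is its core (a 2-multigraph) with a rooted \emph{simple} tree planted at each core vertex; since trees have no loops and no multiple edges — and, as noted in the proof of Theorem~\ref{th:kernel}, every vertex incident to a loop or a multiple edge lies in the core — the tree attachment only touches $x$ and $y_1$, leaving $z$ and $y_{\ge 2}$ untouched. I would then split $\widetilde C^\bullet$ into three terms according to where the root sits: (i) the root is a core vertex, so its degree is its core-degree plus the root-degree of the tree planted there; (ii) the root lies strictly inside a planted tree, so re-rooting that tree at the root exhibits the root's own subtrees together with one edge leading back toward the core (an extra factor $w$), plus a path of trees linking back to a core vertex; (iii) the core is empty and the whole object is a rooted tree. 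Each term is built from $\widetilde H^\bullet$ (from its $w=1$ specialization in case (ii), since there the root is not in the core; from nothing in case (iii)) with the vertex variable $x$ replaced by the rooted-tree series $\mathcal T(x,y_1):=T(xy_1)/y_1$, a $w$-marked planted tree $xe^{wy_1\mathcal T(x,y_1)}$ at the root, and a path factor $\bigl(1-y_1\mathcal T(x,y_1)\bigr)^{-1}$ in case (ii) (with the appropriate $y_1$-weights on the connecting edges).

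Next I would invert this relation by solving $\mathcal T(x,y_1)=x'$, i.e. $x=x'e^{-x'y_1}$, exactly as in the proof of Equation~(\ref{eq:multicores}); after renaming $x'\mapsto x$ the first argument of $\widetilde C^\bullet$ becomes $xe^{-xy_1}$. In the decomposition $\widetilde H^\bullet$ is multiplied by $\mathcal T(x,y_1,w)/\mathcal T(x,y_1)=\exp\bigl((w-1)y_1\mathcal T(x,y_1)\bigr)$, so isolating $\widetilde H^\bullet$ divides by this, which under the substitution becomes $\exp\bigl((1-w)y_1x\bigr)=e^{y_1x(1-w)}$, the announced prefactor. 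Case (iii) — the rooted tree $\mathcal T(x,y_1,w)$ — becomes $xe^{-y_1x(1-w)}$ under the substitution, so after multiplication by the prefactor it contributes precisely $-x$. Case (ii) contributes $-w$ times the $w=1$ specialization of a product of $\widetilde H^\bullet$ with the path factor; this is a function of $x,z,y_1,\dots$ with no $w$ in it, and it is the function $A$. Its $w$-independence is automatic because the root does not lie in the core in that case, so $\widetilde H^\bullet$ enters only at $w=1$; the single remaining $w$ is the $+1$ produced by re-rooting, which is displayed explicitly as the factor in front of $A$. This mirrors the simple-graph correction $-xwC^\bullet(xe^{-x})-x+x^2w$ in Equation~(\ref{eq:coresdeg}), which is affine in $w$ for exactly the same reason.

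The conceptual content here is modest — it is the proof of Equation~(\ref{eq:coresdeg}) with the extra variables riding along — so the main obstacle is bookkeeping rather than ideas: one must keep the $y_1$-weights on all tree and path edges straight through the change of variables, and must get the combinatorial interpretation of the re-rooting in case (ii) right, since the extra $+1$ in the root degree (hence the lone factor of $w$ multiplying $A$) is where a slip would be easiest. The infinitely many multiplicity variables and the loop variable only inflate the notation; because they are inert under tree attachment, the real computation is confined to the $x,y_1,w$ block, precisely as in Theorem~\ref{th:coresdeg}.
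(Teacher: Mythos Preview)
Your proposal is correct and follows essentially the same approach as the paper: write the rooted core decomposition for multigraphs as a three-term identity (root in core, root in an attached tree, core empty), then invert via $\mathcal T(x,y_1)\mapsto x$, exactly paralleling Theorem~\ref{th:coresdeg} with $z,y_2,y_3,\dots$ carried as spectators. Your write-up is in fact more explicit than the paper's, which simply records the decomposition equation and defers the justification and inversion to the proof of Theorem~\ref{th:coresdeg}; your care with the $y_1$-weights on the path edges in case~(ii) is harmless here since the lemma only asserts the existence of some $w$-free function $A$.
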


\begin{proof}
The decomposition of a planar connected weighted multigraph into
its core and the attached rooted trees implies the following equation:
$$
\widetilde{C}^{\bullet}(x,w,z,y_1,\ldots,y_k,\ldots) =
\widetilde{H}^{\bullet}(T(x,y_1),w,z,y_1,\ldots,y_k,\ldots)
{T(x,y_1,w)\over T(x,y_1)}+
$$
$$
+\widetilde{H}^\bullet(T(x,y_1),z,y_1,\ldots,y_k,\ldots)
{wT(x,y_1,w)\over 1-T(x,y_1)}+T(x,y_1,w),
$$
where $T(x,y) = T(xy)/y$ is the generating function of
rooted trees where $x$ marks vertices and $y$ marks edges,
and $T(x,y,w) = T(xy,w)/y$ is the generating function of rooted
trees where $x$ marks vertices, $y$ marks edges, and $w$ marks the
root degree. The justification of this relation is analogous to
the proof of Theorem~\ref{th:coresdeg}, as well as
the inverse.
\end{proof}

\begin{lemma}
Let $K^{\bullet}(x,w)$ be the generating function of rooted simple planar
3-graphs where $x$ marks vertices and $w$ marks the root degree.
The following equation holds
\begin{equation}\label{eq:kernelcoresdegree}
K^{\bullet}(x,w) =
\widetilde H^{\bullet}(x,w,-sx,1+s,2s+s^2,\ldots ,ks^{k-1}+s^k,\ldots)
+w^2A(x),
\end{equation}
for a given function $A(x)$, and where $s=-x/(1+x)$.
\end{lemma}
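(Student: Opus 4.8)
The plan is to establish this identity in exactly the same way the unrooted kernel formula (\ref{eq:multikernelcores}) was obtained, but keeping track of the degree of the root: first write the rooted, multivariate refinement of the forward decomposition (\ref{eq:multicoreskernel}) of a 2-multigraph into its kernel (each edge of the kernel replaced by a path), then invert it, and finally specialize the edge-multiplicity variables so as to pass from 3-multigraphs to simple 3-graphs. The refinement of (\ref{eq:multicoreskernel}) reads
$$
\widetilde H^{\bullet}(x,w,z,y_1,y_2,\ldots)=\widetilde K^{\bullet}\!\Big(x,\,w,\,sxy_1+xy_2+z,\,y_1+s,\,y_2+2y_1s+s^2,\ldots,\,\ts\sum_{j=0}^{k}\binom{k}{j}y_j s^{k-j},\ldots\Big)+w^2R(x,z,y_1,\ldots)+(\text{cycle terms}),
$$
with $s=xy_1^2/(1-xy_1)$ as in (\ref{eq:multicoreskernel}); solving this for $\widetilde K^{\bullet}$ and then setting $z=0$, $y_1=1$, $y_k=0$ for $k\ge 2$ forces the kernel to be a simple 3-graph, and since $K^{\bullet}(x,w)=\widetilde K^{\bullet}(x,w,0,1,0,\ldots)$ this will yield (\ref{eq:kernelcoresdegree}).

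The combinatorial content of the displayed refinement is the usual one, with the position of the root taken into account. A rooted 2-multigraph either (i) has its root on a vertex of the kernel, (ii) has its root strictly inside one of the paths replacing an edge or a loop of the kernel, or (iii) is a pure cycle (empty kernel). In case (i) the kernel is itself rooted, and the decisive observation — completely analogous to the bookkeeping in the proof of Theorem~\ref{th:coresdeg} — is that replacing a loop at the root by a path, or a $k$-edge at the root by a $j$-edge together with $k-j$ internally disjoint paths, does not change the root's degree, so the root-degree variable is simply inherited; this is why $w$ appears untouched in the first term and the substitutions in the remaining (structural) variables are precisely those of (\ref{eq:multicoreskernel}). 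In case (ii) the root is an internal vertex of a subdivided edge or loop, hence has degree exactly two: it contributes a factor $w^2$ times the generating function obtained from the \emph{unrooted} kernel by distinguishing one edge or loop and splitting the replacing path at the marked point — this is the term $w^2R$. Case (iii) is handled like the cycle terms in (\ref{eq:multicoreskernel}) and again carries a factor $w^2$, since a cycle vertex has degree two.

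Inverting the structural substitution is identical to the passage from (\ref{eq:multicoreskernel}) to (\ref{eq:multikernelcores}): the path series is replaced by $s=-xy_1^2/(1+xy_1)$, and $w$ is inert under this inversion. After solving for $\widetilde K^{\bullet}$ and substituting $z=0$, $y_1=1$, $y_k=0$ ($k\ge 2$) one gets $s=-x/(1+x)$, and the arguments of $\widetilde H^{\bullet}$ become, in order, $-sx$ in the loop slot (from $-sxy_1-xy_2+z$), then $1+s$, then $2s+s^2$, and $ks^{k-1}+s^k$ in the $y_k$ slot (from $\sum_{j=0}^{k}\binom{k}{j}y_j s^{k-j}$ with $y_0=y_1=1$) — exactly the list in the statement. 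In the same specialization the unrooted kernel appearing inside $R$ can be rewritten via (\ref{eq:multikernelcores}), after which every variable but $w$ is fixed, so that $R$ together with the cycle contribution collapses to a single power series, giving $w^2A(x)$. The main obstacle is case (ii): one must describe precisely how the kernel contraction relocates a degree-two root — it becomes a marked interior point of a subdivided edge or loop, equivalently a marked edge of the kernel flanked by the two halves of the subdividing path — and verify that, once the edge and loop substitutions and the simple-graph specialization have been carried out, the resulting generating function carries no $w$-dependence beyond the explicit factor $w^2$, so that it can be absorbed into $A(x)$. The remaining work — inverting the infinite family of edge-multiplicity substitutions in the presence of the extra variable $w$, and checking the small degenerate cases (loops, double edges) as in Theorem~\ref{th:kernel} — is routine and parallels the unrooted computation already carried out.
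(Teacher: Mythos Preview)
Your proposal is correct and follows essentially the same route as the paper: write the rooted refinement of the kernel decomposition (\ref{eq:multicoreskernel}), observe that the root is either a kernel vertex (degree inherited unchanged) or an interior path/cycle vertex (degree two, giving the $w^2$ factor), then invert the edge-multiplicity substitution and specialize $z=0$, $y_1=1$, $y_k=0$ to pass to simple 3-graphs via $K^\bullet(x,w)=\widetilde K^\bullet(x,w,0,1,0,\ldots)$. Your treatment is in fact more explicit than the paper's --- you spell out why the root degree is preserved under the loop and $k$-edge substitutions and describe the combinatorial content of the $w^2R$ term, whereas the paper simply records the existence of a $w$-independent function $A(x,z,y_1,\ldots)$ and defers the inversion to Section~\ref{sec:equations}.
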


\begin{proof}
The starting point is Equation (\ref{eq:multicoreskernel}), corresponding to
the decomposition of a planar 2-multigraph into its kernel and paths of vertices.
If we root a vertex of a planar 2-multigraph there are two options:
either it belongs to the kernel or it belongs to an edge of the kernel.
In the former case, its degree corresponds to the degree of the corresponding
vertex in the kernel. In the latter case its degree must be 2. With this
observation we can extend this equation so that it considers rooted graphs
and it takes the root degree into account, as
$$
\begin{array}{ll}
\widetilde{H}^\bullet(x,w,z,y_1,y_2,\ldots,y_k,\ldots)=&\\
\widetilde{K}^\bullet\left(x,w,sxy_1+xy_2+z,y_1+s,y_2+2y_1s+s^2,\ldots,
\sum_{j=0}^{k}\binom{k}{j}y_js^{k-j},\ldots\right) \\
\qquad +w^2A(x,z,y_1,\ldots,y_k,\ldots),
\end{array}
$$
where $A(x,z,y_1,\ldots,y_k,\ldots)$ does not depend
on $w$. This relation can be inverted as in Section~\ref{sec:equations},
and finally we can conclude (\ref{eq:kernelcoresdegree}) from
the following equation
$$
K^\bullet(x,w) = \widetilde K^\bullet(x,w,0,1,0,\ldots,0,\ldots).
$$
\end{proof}

\noindent {\it Proof of Theorem~\ref{th:kerneldeg}.}
Equation (\ref{eq:kerneldeg}) is a direct consequence of equations
(\ref{eq:kernelcoresdegree}), (\ref{eq:multicoresdeg}) and
(\ref{eq:multigraphsdeg}).

\begin{theorem}
Let $f_k$ be the limit probability that a random vertex has degree
$k$ in a planar 3-graph. The limit probability distribution
$p_K(w) = \sum f_k w^k$ exists and is computable.
\end{theorem}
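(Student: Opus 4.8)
The plan is to mirror the proof for 2-graphs, replacing Equation~(\ref{eq:coresdeg}) by Equation~(\ref{eq:kerneldeg}) from Theorem~\ref{th:kerneldeg} and then performing singularity analysis. From \cite{degree} we may assume that the edge-refined rooted generating function $C^\bullet(x,y,w)$, where $x$ marks vertices, $y$ edges and $w$ the root degree, has near its dominant singularity $x=\rho(y)$ a singular expansion of the form
\[
C^\bullet(x,y,w) = C_0(y,w) + C_2(y,w)X^2 + C_3(y,w)X^3 + O(X^4),\qquad X=\sqrt{1-x/\rho(y)},
\]
with $C_3(y,w)$ analytic in $(y,w)$ in a neighbourhood of the relevant point, where $\rho(y)$ is the same singularity curve used in Theorem~\ref{th:3graphs}. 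The univariate specialization $C_3(1,w)/C_3(1,1)$ is precisely the probability generating function $p(w)$ appearing in the 2-graph case, so the edge-refined quotient $C_3(y,w)/C_3(y,1)$ is a known, computable object.

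First I would locate the dominant singularity of $x\mapsto K^\bullet(x,w)$. Since $B_1$ and $B_2$ in (\ref{eq:kerneldeg}) coincide with the functions $A(\cdot,1)$ and $B(\cdot,1)$ of Theorem~\ref{th:3graphs} and do not depend on $w$, the singularity of the composed function $x\mapsto C^\bullet(B_1(x),B_2(x),B_3(x,w))$ is, for $w$ in a neighbourhood of $1$, the same value $\tau\approx 0.0469$ solving $B_1(\tau)=\rho(B_2(\tau))$, provided $B_3(\tau,w)$ stays inside the analyticity domain of $C_3(B_2(\tau),\cdot)$. Writing $D(x)=B_1(x)/\rho(B_2(x))$, exactly as in the proof of Theorem~\ref{th:3graphs} one has $D(\tau)=1$ and $1-D(x)=\tau D'(\tau)(1-x/\tau)+O((1-x/\tau)^2)$, hence $\sqrt{1-D(x)}=\sqrt{\tau D'(\tau)}\,Z+O(Z^2)$ with $Z=\sqrt{1-x/\tau}$.

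Next I would substitute into (\ref{eq:kerneldeg}) and collect the singular terms. The term $A(x,w)=A_0(x)+A_1(x)w+A_2(x)w^2$ has coefficients analytic at $x=\tau$ and is part of the analytic background; $B_0(x,w)$ is analytic at $x=\tau$, so $B_0(x,w)=B_0(\tau,w)+O(Z^2)$; and $C^\bullet(B_1(x),B_2(x),B_3(x,w))$ inherits the expansion in $Z$ with leading singular coefficient $C_3(B_2(\tau),B_3(\tau,w))(\tau D'(\tau))^{3/2}Z^3$. Multiplying, one gets
\[
K^\bullet(x,w)=K_0(w)+K_2(w)Z^2+K_3(w)Z^3+O(Z^4),\qquad
K_3(w)=B_0(\tau,w)\,C_3(B_2(\tau),B_3(\tau,w))\,(\tau D'(\tau))^{3/2}.
\]
Since $B_0(\tau,1)=1$, $B_3(\tau,1)=1$ and $K_3(1)\neq 0$ (it is, up to a nonzero factor, the pointing of the dominant coefficient $K_5$ of Theorem~\ref{th:3graphs}), the Transfer Theorem applied for each fixed $w$ near $1$ yields $[x^n]K^\bullet(x,w)\sim K_3(w)\,n^{-5/2}\tau^{-n}/\Gamma(-3/2)$, and therefore
\[
p_K(w)=\frac{K_3(w)}{K_3(1)}=B_0(\tau,w)\,\frac{C_3(B_2(\tau),B_3(\tau,w))}{C_3(B_2(\tau),1)},
\]
which is computable from the (very involved but explicit) expansion of $C^\bullet(x,y,w)$ in \cite{degree}. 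The existence of the limits $f_k=[w^k]p_K(w)$ and the identity $\sum_k f_k=1$ then follow by extracting $[w^k]$ via Cauchy's formula and a standard continuity (Vitali) argument on the small disk in $w$, exactly as for 2-graphs and as in \cite{degree}.

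The main obstacle is the bookkeeping around the formula of \cite{degree}: one must verify that the dominant singularity of $C^\bullet(x,y,w)$ is genuinely of square-root type inherited from the connected-graph singularity curve, so that $C_3(y,w)$ is well defined and analytic at $(B_2(\tau),1)$; that $\tau$ is not a singularity of the auxiliary functions $B_0,A_0,A_1,A_2$ nor a branch point of $y\mapsto\rho(y)$; and that $B_3(\tau,w)$ remains in the analyticity domain of $C_3(B_2(\tau),\cdot)$ for $w$ in a neighbourhood of $1$, together with $\tau D'(\tau)>0$ (which makes the square root real and is where the detailed behaviour of $\rho$ is used). Everything else is the routine composition of singular expansions already carried out in the 2-graph case.
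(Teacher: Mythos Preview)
Your overall strategy matches the paper's: compose the singular expansion of $C^\bullet(x,y,w)$ from \cite{degree} with Equation~(\ref{eq:kerneldeg}), use $D(x)=B_1(x)/\rho(B_2(x))$ to convert $X$ into $Z=\sqrt{1-x/\tau}$, and read off $p_K(w)=K_3(w)/K_3(1)$. The singularity location, the relation $\sqrt{1-D(x)}=\sqrt{\tau D'(\tau)}\,Z+O(Z^2)$, and the checks $B_0(\tau,1)=B_3(\tau,1)=1$ are all correct.

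However, there is a genuine gap in your treatment of the term $A(x,w)=A_0(x)+A_1(x)w+A_2(x)w^2$. You assert that the $A_i(x)$ are ``analytic at $x=\tau$'' and hence part of the analytic background, contributing nothing to $K_3(w)$. This is not so. The phrase ``analytic functions'' in Theorem~\ref{th:kerneldeg} only means they are given by power series; it does not say they are analytic at $\tau$. If you trace the construction of $A(x,w)$ through the lemmas, these corrections come from the cases where the root lies outside the kernel (in an attached tree, or in the interior of a subdivided path), and they involve $C^\bullet$ or $H^\bullet$ evaluated at the same arguments $B_1(x),B_2(x)$ as the main term. Consequently they are singular at $\tau$ and do contribute a term $a_0+a_1w+a_2w^2$ to $K_3(w)$. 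This is exactly parallel to the 2-graph case, where the correction $-xwC^\bullet(xe^{-x})$ is singular at $\sigma$ and produces the $-\sigma w$ in the numerator of~(\ref{eq:distcores}); without it, $p_H(w)$ would have a spurious $w^1$ term.

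The paper handles this by keeping $a_0+a_1w+a_2w^2$ in $K_3(w)$ and then determining the three constants from the combinatorial constraint $f_0=f_1=f_2=0$ (a 3-graph has no vertex of degree less than three). Your formula
\[
p_K(w)=B_0(\tau,w)\,\frac{C_3(B_2(\tau),B_3(\tau,w))}{C_3(B_2(\tau),1)}
\]
omits this correction and will in general have nonzero coefficients at $w^1$ and $w^2$, so it is not the correct limiting distribution. The fix is simply to retain the polynomial correction in $K_3(w)$ and solve for $a_0,a_1,a_2$ from $[w^0]p_K=[w^1]p_K=[w^2]p_K=0$.
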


\begin{proof}
The generating function $C^\bullet(x,y,w)$ is expressed
in~\cite{degree} as
$$
C^\bullet(x,y,w) = C_0(y,w)+C_2(y,w)X^2+C_3(y,w)X^3+O(X^4),
$$
where $X=\sqrt{1-x/\rho(y)}$. If we compose this expression with
(\ref{eq:kerneldeg}) we obtain
\begin{equation}\label{eq:kernelexp}
\begin{array}{ll}
K^\bullet(x,w) = B_0(x,w)\times\\ 
\left[C_0(B_2(x)),B_3(x,w))+
C_2(B_2(x),B_3(x,w))X^2+ 
C_3(B_2(x),B_3(x,w))X^3+O(X^4)\right]\\
\qquad +A(x,w),
\end{array}
\end{equation}
where $X=\sqrt{1-B_1(x)/\rho(B_2(x)}$. If we define
$D(x) = B_1(x)/\rho(B_2(x))$ then we can proceed as in the proof
of Theorem~\ref{th:3graphs}, obtaining that
$X=\sqrt{\tau D'(\tau)}Z+O(Z^2)$, where $Z = \sqrt{1-x/\tau}$.
Plugging this expression into~(\ref{eq:kernelexp}) we obtain
$$
K^\bullet(z,w) = K_0(w)+K_2(w)Z^2+K_3(w)Z^3+O(Z^4),
$$
where $Z = \sqrt{1-z/\tau}$ and
$$
K_3(w) = B_0(\tau,w)C_3(B_2(\tau),B_3(\tau,w))
(\tau D'(\tau))^{3/2}+a_0+a_1w+a_2w^2,
$$
for some constants $a_0$, $a_1$ and $a_2$. The limit probability distribution
of the root vertex being of degree $k$ is computed as
$$
p_K(w) = \frac{K_3(w)}{K_3(1)} =
\frac{B_0(\tau,w)C_3(B_2(\tau),B_3(\tau,w))(\tau D'(\tau))^{3/2}
+a_0+a_1w+a_2w^2}
{B_0(\tau,1)C_3(B_2(\tau),1)(\tau D'(\tau))^{3/2}+a_0+a_1+a_2}.
$$
Since we know that a 3-graph has no vertices of degree 0, 1 or 2,
we can choose suitable values of $a_0$, $a_1$ and $a_2$ such that
the probability distribution $p_K(w)=\sum f_kw^k$ satisfies
$f_0=f_1=f_2=0$. The function $C_3(y,w)$ is described in~\cite{degree},
and every other function that appears in the previous expression
is explicit. Therefore,
$p_K$ is computable, as we wanted to prove.
\end{proof}

We remark that $p_K(w)$ is expressed in terms of $C_3(x,w)$, which 
is a very involved (although elementary) function, given in the appendix in \cite{degree}.

\section{Concluding remarks}\label{sec:conclude}

Most of the results we have obtained can be extended to other classes of graphs. Let $\G$ be a class of graphs closed under taking minors such that the excluded minors of $\G$ are 2-connected. Interesting examples are the classes of series-parallel and outerplanar graphs.
Given such a class $\G$, a connected graph is in $\G$ if and only if its core is in~$\G$. Hence  Equation (\ref{eq:cores}) also
holds for graphs in $\G$.  Using the results from \cite{SP}, we have performed the corresponding computations for the classes of series-parallel and outerplanar graphs (there are no results for kernels since outerplanar and series-parallel have always minimum degree at most two). The results are displayed in the next table, together with the data for planar graphs.
The expected number of edges is $\mu  n$, and the expected size of the core is $\kappa  n$. It is worth remarking that the size of the core is always linear, whereas the size  of the largest block in series-parallel and outerplanar graphs is only $O(\log n)$ \cite{3-conn,KS}.

$$
\renewcommand{\arraystretch}{1.5}
\begin{array}{|l|c|c|c|}
\hline
\hbox{Graphs} & \hbox{Growth constant} & \mu \hbox{ (edges) } & \kappa \hbox{ (core)} \\
\hline
\hbox{Outerplanar} & 7.32  & 1.56 & 0.84\\
\hline
\hbox{Outerplanar 2-graphs} & 6.24  & 1.67 & \\
\hline
\hbox{Series-parallel} & 9.07  & 1.62 & 0.875 \\
\hline
\hbox{Series-parallel 2-graphs} & 8.01  & 1.70 & \\
\hline
\hbox{Planar} & 27.23  & 2.21 & 0.962 \\
\hline
\hbox{Planar 2-graphs} & 26.21  & 2.26 & \\
\hline
\end{array}
$$

\bigskip
The $k$-core of a graph $G$ is the maximum subgraph
of G in which all vertices have degree at least $k$. Equivalently, it is the subgraph of G formed by deleting repeatedly (in any order) all vertices of degree less than $k$. In this terminology, what we have called the core of a graph is the 2-core. Since a random planar graph contains linearly many copies of any fixed connected planar graph  \cite{colin2009,gn} it is not difficult to show that the 3-core, 4-core and 5-core of a random planar graph have all linear size with high probability (there is no 6-core since a planar graph has always a vertex of degree at most five). The interesting question is however whether the $k$-core has a connected component of linear size, as is the case for $k=2$.
We have performed computational experiments on random planar graphs, using the algorithm described in \cite{fusy}, and based on the results we formulate the following conjecture.

\paragraph{Conjecture.} With high probability the 3-core of a random planar graph has one component of linear size.
With high probability the components of the 4-core of a random planar graph are all of  sublinear size.

\bigskip

We have not been able to prove neither of the conjectures. As opposed to the kernel, the 3-core is obtained by repeatedly \emph{removing} vertices of degree two. These deletions may have long-range effects that appear difficult to analyze. Even more challenging appears the analysis of the 4-core.

\paragraph{Acknowledgements.}

Part of this work was done while the second author was visiting the Technical University of Vienna. 
We are  very grateful to Michael Drmota for his help on several technical points of the proofs of our results.

\section*{Appendix}

The following technical result was needed to conclude the proof of Theorem \ref{th:3graphs}.

\begin{lemma*}\label{le:K.Delta}
	The generating function $K(x)$ is  
	$\Delta$-analytic at its dominant singularity~$\tau$. 
\end{lemma*}
\begin{proof}
	For the proof we  introduce the following generating functions:
	\begin{itemize}
		\item $K^\bullet(x)$ is the generating function
		of rooted planar graphs with minimum degree
		at least 3. Note that $K^\bullet(x)$
		has the same radius of convergence $\tau$ as $K(x)$.
		\item For $i=1,2$, $K^\bullet_i(x)$ is the generating
		function of rooted planar graphs where all the
		vertices have degree at least 3 except
		for the root, which has degree exactly $i$.
		
		\item $\widehat B(x,u)$ is the generating function
		of 2-connected planar graphs
		where $x$ marks vertices of degree at least 3,
		$u$ marks vertices of degree exactly two,
		and both types of vertices are labelled with
		the same set of labels. In particular
		$\widehat B(x,u) = \sum_{n,m\geq 0}b_{n,m}
		{x^nu^m/ (n+m)!}$, where $b_{n,m}$
		counts  2-connected planar graphs
		with $n$ vertices of degree at least 3 and
		$m$ vertices of degree exactly 2. Note that
		we do not count a single edge in $\widehat B(x,u)$
		since it has no vertices of degree 2 or more.
	\end{itemize}
	A simple combinatorial argument gives 
	\begin{equation}\label{eq:K}
	\begin{array}{ll}
	K^\bullet_1 &= F_1(x,K^\bullet,K^\bullet_2), \\
	K^\bullet_2 &= F_2(x,K^\bullet,K^\bullet_1,K^\bullet_2), \\
	K^\bullet &= F_3(x,K^\bullet,K^\bullet_1,K^\bullet_2),
	\end{array}
	\end{equation}
	where
	$$
	\begin{array}{ll}
	F_1(x,z,z_2)  = x (z+z_2), \\
	F_2(x,z,z_1,z_2) = x\left(\ds{(z+z_2)^2
		\over 2}+B_u\right), \\ 
	F_3(x,z,z_1,z_2) =  		\\
\qquad	x\left( B_x+(z+z_2)
	(B_u+B_x)+\ds{(B_u+B_x)^2\over 2}+
	\exp_{\geq 3}(z+z_2+B_u+B_x)\right),
	\end{array}
	$$ 
	and 
	$$
	\begin{array}{ll}
	B_x = \widehat B_x(x+z+z_1+z_2,
	z+z_1+z_2),\\
	B_u = \widehat B_u(x+z+z_1+z_2,
	z+z_1+z_2).
	\end{array}
	$$
	We remark  that the coefficients of the series $F_i$ are non-negative.
	
	First we   check that $F_1$, $F_2$
	and $F_3$ are analytic in a neighbourhood
	of~$0$, which  is equivalent to checking
	that $B_x$ and $B_u$ are analytical at $0$.
	We derive this from the following properties of $\widehat B$:
	\begin{itemize}
		\item $\widehat B$ is a  series in  $x$ and $u$ with non-negative coefficients.
		\item $B_x$ and $B_u$ are analytic at $(x_0,u_0)$
		if and only if $\widehat B$ is analytic at
		$(x_0,u_0)$.
		\item If $\widehat B$ is analytic
		near $(x_0,u_0)$, then it is analytic
		at $(x_1,u_1)$, for $|x_1|\leq x_0$
		and $|u_1|\leq u_0$.
		\item $\widehat B (x,x)=B(x)-x^2/2$, hence $\widehat B(x,u)$ is for $(x_0,u_0) < (R,R)$,
		although it might be analytic for $(x_0,u_0)$
		where $u_0 < R \leq x_0$ or $x_0 < R \leq u_0$ as well.
	\end{itemize}
	This implies that $\widehat{B}(x,u)$ is analytic
	at $0$, and the same holds for $F_1$, $F_2$ and $F_3$.
	Since 
	$K^\bullet(0) = K^\bullet_1(0) = K^\bullet_2(0) = 0$
	we have that the system (\ref{eq:K}) holds in a neighbourhood of $x = 0$, and it ceases to hold at the singularity
	of $K^\bullet$. First note that $K^\bullet_1$,
	$K^\bullet_2$ and $K^\bullet$ have all the same radius
	of convergence, $\tau$, because all of them are the
	sum of the others plus some positive terms. 
	In these cases there are three sources
	of singularities:
	\begin{itemize}
		\item Poles at $F_1$, $F_2$ and $F_3$. This is not
		possible, since all the involved functions are 
		analytic in $\mathbb{C}$ except for $B_x$ and $B_u$.
		\item Branching point in solving $F_1$, $F_2$ and
		$F_3$. This is not possible either, since in this case
		the singular analysis of $K^\bullet$ would be of the
		form $K^\bullet = K^\bullet_0 + K^\bullet_1 Z+O(Z^2)$,
		where $K^\bullet_1\neq 0$, and we have seen
		in Equation (\ref{eq:Ksing}) that this is not the case.
		\item A singularity in $B_x(x)$ and $B_u(x)$ 
		(note that both functions share singularities). This must be
		the source of singularity, and in fact
		the singularity of $B_x(x)$ must be exactly at $x=\tau$.
		If the singularity was at a given $x_0<\tau$, then
		there would be an unbounded derivative of $B_x$ at $x_0$,
		and since $K^\bullet$ is $x\cdot B_x$ plus some
		positive terms, then $K^\bullet$ would have
		an unbounded derivative at $x_0<\tau$, and that is
		impossible since $K^\bullet$ is analytic for
		$x$ with $|x|<\tau$.
		The singularity cannot be at an $x_0>\tau$ either, 
		because we discarded the other sources of singularities
		and this would imply that $K^\bullet$ is analytic
		for $x>\tau$, which is impossible.
	\end{itemize}
	Therefore the equations hold for $x$ such that
	$B_x$ is analytic at
	$(x+K^\bullet_1(x)+K^\bullet_2(x)
	+K^\bullet(x),K^\bullet_1(x)+K^\bullet_2(x)
	+K^\bullet(x))$.
	Now, consider $x$ such that $|x|=\tau$ but 
	$x \neq \tau$. Then, by positivity of $\widehat B$
	and $K^\bullet_i$, we have:
	$(|x+K^\bullet_1(x)+K^\bullet_2(x)
	+K^\bullet(x)|,|K^\bullet_1(x)+K^\bullet_2(x)
	+K^\bullet(x)|)<(\tau+K^\bullet_1(\tau)+K^\bullet_2(\tau)
	+K^\bullet(\tau),K^\bullet_1(\tau)+K^\bullet_2(\tau)
	+K^\bullet(\tau))$, so $\widehat B$ is analytic
	and the equations hold.
	Therefore $K^\bullet(x)$
	is analytic as well.
	
	We just have to check that, if $|x|=\tau$ and 
	$x\neq \tau$ then there is not branching point
	when solving the system of equations.
	Let $A$ be the Jacobian matrix of $(F_1,F_2,F_3)$.
	According to \cite[Section 2.2.5]{drmotatrees},
	the maximum positive eigenvalue of $A$ is a positive
	function in $x$, $K^\bullet_i$. We know that 
	such an eigenvalue must be smaller than $1$ when evaluated
	at $(\tau,K_1^\bullet(\tau),K_2^\bullet(\tau),K^\bullet(\tau))$,
	since otherwise there would exist a real $x$ with
	$|x|<\tau$ such that the system evaluated at $x$ has a branching
	point, and we know this is not possible.
	Hence, by positivity of the maximum eigenvalue,
	if $|x|=\tau$ but $x\neq \tau$ then the maximum eigenvalue
	of $A$ evaluated at
	$(x,K_1^\bullet(x),K_2^\bullet(x),K^\bullet(x))$ cannot be 1,
	so we can apply the Implicit Function Theorem and there is
	an analytic continuation of $K^\bullet$ in a neighbourhood of $x$,
	and by compactness
	it can be extended to a $\Delta$-domain,
	as we wanted to prove. 
\end{proof}

\newpage

\end{document}